\newcommand*{\rom}[1]{\expandafter\@slowromancap\romannumeral #1@}
\newenvironment{myenv}[1]
  {\mdfsetup{
    frametitle={\colorbox{white}{\space#1\space}},
    innertopmargin=10pt,
    frametitleaboveskip=-\ht\strutbox,
    frametitlealignment=\center
    }
  \begin{mdframed}
  }
  {\end{mdframed}}
\DeclareMathAlphabet\mathbfcal{OMS}{cmsy}{b}{n}
\newtheorem{definition}{Definition}
\newtheorem{lemma}{Lemma}
\newtheorem{proposition}{Proposition}
\newtheorem{remark}{Remark}
\newcolumntype{P}[1]{>{\centering\arraybackslash}p{#1}}
\numberwithin{equation}{section}
\begin{document}
\title{A continuum and computational framework for viscoelastodynamics: \rom{3}. A nonlinear theory}
\author{Ju Liu$^{\textup{ *}}$, Jiashen Guan, Chongran Zhao, Jiawei Luo \\
\textit{\small Department of Mechanics and Aerospace Engineering,}\\
\textit{\small Southern University of Science and Technology,}\\
\textit{\small 1088 Xueyuan Avenue, Shenzhen, Guangdong 518055, China}\\
$^{*}$ \small \textit{E-mail address:} liuj36@sustech.edu.cn, liujuy@gmail.com
}
\date{}
\maketitle

\section*{Abstract}
We continue our investigation of viscoelasticity by extending the Holzapfel-Simo approach discussed in Part \rom{1} to the fully nonlinear regime. By scrutinizing the relaxation property for the non-equilibrium stresses, it is revealed that a kinematic assumption akin to the Green-Naghdi type is necessary in the design of the potential. This insight underscores a link between the so-called additive plasticity and the viscoelasticity model under consideration, further inspiring our development of a nonlinear viscoelasticity theory. Our strategy is based on Hill's hyperelasticity framework and leverages the concept of generalized strains. Notably, the adopted kinematic assumption makes the proposed theory fundamentally different from the existing models rooted in the notion of the intermediate configuration. The computation aspects, including the consistent linearization, constitutive integration, and modular implementation, are addressed in detail. A suite of numerical examples is provided to demonstrate the capability of the proposed model in characterizing viscoelastic material behaviors at large strains.

\vspace{5mm}

\noindent \textbf{Keywords:} 
Continuum mechanics, Constitutive modeling, Viscoelasticity, Hyperelasticity of Hill's class, Generalized strains, Green-Naghdi plasticity

\section{Introduction}
\subsection{Motivation}
\label{subsec:motivation}
The motivation of this work is to generalize the finite deformation \textit{linear} viscoelasticity models to the nonlinear regime, and it came from observations of our previous studies. Throughout this work, we refer to Liu, Latorre, and Marsden \cite{Liu2021b} as Part \rom{1}, and it was revealed in Part \rom{1} that the finite deformation linear viscoelasticity models, under the isothermal condition, are characterized by the following form of the configurational free energy
\begin{align}
\label{eq:intro-Upsilon-form}
\Upsilon\left(\tilde{\bm C}, \bm \Gamma \right) = \frac{1}{4\mu} \left\lvert 2 \frac{\partial \mathcal G(\tilde{\bm C})}{\partial \tilde{\bm C}} - \mu \left( \bm \Gamma - \bm I \right)  - \hat{\bm S}_{0} \right\lvert^2.
\end{align}
The constant symmetric tensor $\hat{\bm S}_{0}$ determines the initial value of $\bm Q := -2 \partial \Upsilon / \partial \bm \Gamma$. The linearity of such type of models comes from the above quadratic form of the energy $\Upsilon$ and the Newtonian assumption on the viscous behavior (i.e., there exists a constant rank-four viscosity tensor relating the stress and strain rate). The models based on the identical-polymer-chain assumption are recovered by taking the energy-like function $\mathcal G$ in \eqref{eq:intro-Upsilon-form} as the hyperelastic free energy scaled by a non-dimensional parameter. Although such type of modeling approach was successfully adopted in a variety of scenarios \cite{Govindjee1992,Holzapfel1996,Holzapfel2001,Wollner2024}, its configurational free energy lacks a clear physical interpretation. Another model is coined as the Holzapfel-Simo-Saint Venant-Kirchhoff model in Part \rom{1}, which is instantiated by taking
\begin{align*}
\mathcal G = \mu \left\lvert \frac{\tilde{\bm C} - \bm I}{2}\right\lvert^2,
\end{align*}
and $\hat{\bm S}_0 = \bm O$. That leads to a simpler form of the configurational free energy, i.e.,
\begin{align}
\label{eq:intro-Upsilon-HSSK}
\Upsilon\left(\tilde{\bm C}, \bm \Gamma \right) = \mu \left\lvert \frac{ \bm \tilde{\bm C} - \bm \Gamma }{2} \right\lvert^2.
\end{align}
The significance of the above energy can be made clearer if we introduce the volume-preserving Lagrangian strain $\tilde{\bm E}$ and a viscous counterpart $\bm E^{\mathrm v}$ as
\begin{align*}
\tilde{\bm E} := \frac{\tilde{\bm C} - \bm I }{2}, \quad \mbox{ and } \quad \bm E^{\mathrm v} := \frac{ \bm \Gamma - \bm I }{2}. 
\end{align*}
Then the energy \eqref{eq:intro-Upsilon-HSSK} can be expressed as
\begin{align}
\label{eq:intro-Upsilon-E-Ev}
\Upsilon\left(\tilde{\bm C}, \bm \Gamma \right) = \mu \left\lvert \tilde{\bm E} - \bm E^{\mathrm v} \right\lvert^2.
\end{align}
One insight one may glean from the energy form \eqref{eq:intro-Upsilon-E-Ev} is that the two strain-like tensors enter into the energy function in the form of  $\tilde{\bm E} - \bm E^{\mathrm{v}}$. This closely resembles the kinematic assumption made by Green and Naghdi in the realm of elastoplasticity modeling \cite{Green1965,Naghdi1990,Simo1985a,Xiao2006}. If one views the tensor $\tilde{\bm E} - \bm E^{\mathrm{v}}$ as the strain of the non-equilibrium elastic part, the reason of why we put ``Saint Venant-Kirchhoff" in the name of this model is self-explanatory. This perhaps is not so surprising as the original model introduced by Simo in 1987 \cite{Simo1987} is a geometrically nonlinear extension of the small-strain viscoelasticity theory, which also relies on an additive decomposition of the strain \cite[Chapter~10]{Simo2006}. Nevertheless, adopting the Saint Venant-Kirchhoff model is restrictive in material modeling and may suffer from mathematical issues \cite{Raoult1986}. In this work, we extend the model to the nonlinear regime by adopting the kinematic decomposition introduced initially by Green and Naghdi in elastoplasticity \cite{Green1965,Naghdi1990} as the outset.

\begin{table}[htbp]
\begin{center}
\tabcolsep=0.19cm
\renewcommand{\arraystretch}{2.6}
\begin{tabular}{ >{\centering\arraybackslash}m{3.0cm} | >{\centering\arraybackslash}m{5.0cm} | >{\centering\arraybackslash}m{5.0cm} }
 & Multiplicative decomposition  & Additive decomposition   \\
\hline
Elastoplasticity & Lee 1969 \cite{Lee1969} &  Green \& Naghdi 1965 \cite{Green1965}, Miehe 1998 \cite{Miehe1998} \\
\hline
Viscoelasticity & Sidoroff 1974 \cite{Sidoroff1974}, Reese \& Govindjee 1998 \cite{Reese1998} & Simo 1987 \cite{Simo1987}, Holzapfel \& Simo 1996 \cite{Holzapfel1996}
\end{tabular}
\end{center}
\caption{Elastoplastic and viscoelasticity models based on two kinematic assumptions. The viscoelasticity model based on the additive kinematic assumption (i.e., the lower right corner of the table) is also referred to as the finite deformation linear viscoelasticity \cite{Liu2021b}, meaning its nonlinearity is only geometrical.}
\label{table:inelasticity-modeling}
\end{table}

\subsection{Kinematic assumptions in finite inelasticity}
The above discussion brings us to a fundamental issue in inelasticity modeling, that is, how to quantify the inelastic kinematic response. There have been long-lasting debates on this issue, and there is no commonly-accepted approach to date. A widely adopted approach is through the multiplicative decomposition of the deformation gradient $\bm F=\bm F^{\mathrm{e}}\bm F^{\mathrm{i}}$, in which the elastic and inelastic deformations $\bm F^{\mathrm{e}}$ and $\bm F^{\mathrm{i}}$ are related to an imaginary stress-free intermediate configuration. Although this approach is nowadays often credited to Lee and his associates for their pioneering work in the late 1960s \cite{Lee1967,Lee1969}, this idea can be traced back to earlier works of Eckart \cite{Eckart1948} and Kr\"{o}ner \cite{Kroener1959}, among others \cite{Bruhns2020,Sadik2017}. Mathematically, the plastic deformation gradient belonging to $GL(3)_{+}$ is used as the internal state variable for characterizing inelastic deformation. Physically, its microscopic interpretation is due to the crystallographic slip in single-crystal metal plasticity, according to the continuum slip theory \cite{Asaro1983,Reina2016}. This strategy has been found to be applicable in thermomechanics \cite{Stojanovic1964}, growth mechanics \cite{Rodriguez1994}, etc. In a broader context, this modeling framework is mature, with computational approaches and engineering applications extensively investigated \cite{Simo1998,Simo2006,SouzaNeto2011}. The adoption of the multiplicative decomposition for viscoelasticity modeling can be traced back to the work of Sidoroff in 1974 \cite{Sidoroff1974}. Building upon this work, Reese and Govindjee systematically constructed a viscoelasticity model and thoroughly discussed the corresponding finite element modeling \cite{Reese1998,Reese1998b}. In particular, their algorithmic treatment of the constitutive integration exploited the techniques in computational plasticity \cite{Cuitino1992,Simo1992b,Simo1992d,Weber1990}. That viscoelasticity model, derived from thermomechanical principles, allows the use of general nonlinear potential to address the non-equilibrium aspects. Those attributes make it appealing, and thus the multiplicative viscoelasticity model has since then been generalized to modeling additional material behaviors, especially for soft materials. Representative applications include electroactive polymers \cite{Hong2011,Sharma2019}, anisotropic materials \cite{Latorre2015,Latorre2016,Liu2019b,Nguyen2007}, shape memory polymers \cite{Nguyen2008}, and hydrogels \cite{Mao2017}, to list a few.

Nevertheless, the concept of the intermediate configuration is not without shortcomings, and its validity has been questioned since its inception. A primary concern comes with the existence and uniqueness of the intermediate configuration \cite{Casey1992,Dafalias1987,Naghdi1990}. Moreover, the intermediate configuration can be arbitrarily rotated without affecting the multiplicative decomposition. Therefore, the theory is incomplete unless the description of the rotational parts of $\bm F^{\mathrm{e}}$ and $\bm F^{\mathrm{i}}$ are specified. Furthermore, the objectivity of the constitutive models essentially poses a rather involved constraint governing $\bm F^{\mathrm{e}}$ and $\bm F^{\mathrm{i}}$, which is sometimes known as the invariance requirements \cite{Casey1980,Green1971,Dafalias1998}. There are several works striving to resolve the aforementioned issues by proposing proper kinematic laws \cite{Boyce1989,Gurtin2005}. However, they are more or less ad hoc and can be difficult to be justified from physical experiments. In our opinion, those ad hoc laws should be placed together with the multiplicative decomposition as the primary modeling assumptions for specific models. In anisotropic cases, this is particularly critical as one relies on the specific assumption to define the evolution of the material symmetry groups on the intermediate configurations \cite{Nguyen2007,Latorre2015,Ciambella2021}. 

Almost parallel to the development of the theory based on the multiplicative decomposition, Green and Naghdi developed an alternative strategy for elastoplasticity based on an additive decomposition \cite{Green1965}. They introduced a symmetric tensor $\bm E^{\mathrm{p}}$ and referred to it as the plastic strain. In their theory, the difference $\bm E - \bm E^{\mathrm{p}}$ enters into the energy function as a tensorial variable. Recognizing the potential incompatibility issue, both authors emphasized that $\bm E - \bm E^{\mathrm{p}}$ shall not be interpreted as the elastic strain and thus avoided the terminology ``additive decomposition". Despite that, the interpretation of additive decomposition has been used by others since then \cite{Simo1985a,Xiao2006}. An extension of this concept was made by Miehe, who invoked the additive split of the Hencky strain as the point of departure \cite{Miehe1998,Miehe1998a,Miehe2000,Miehe2002}. Schr{\"o}der et al. considered anisotropic elastoplasticity within this framework \cite{Schroeder2002}. Papadopoulos and Lu generalized the additive plasticity theory by exploiting the Seth-Hill strain family \cite{Papadopoulos1998,Papadopoulos2001}. Computational aspects of this type of model have also been investigated \cite{Meng2002,Miehe2001b,Lewandowski2023}. Despite the accumulated studies in the additive theory, research efforts have been made primarily in the realm of elastoplasticity. As was discussed at the beginning of this article, the model of Simo \cite{Simo1987,Holzapfel1996} implicitly adopted the additive decomposition. We summarize representative inelasticity models in Table \ref{table:inelasticity-modeling} based on their kinematic assumptions. By noticing that the current additive viscoelasticity model is only geometrically nonlinear, it is necessary and possible to systematically extend it to the fully nonlinear regime and develop corresponding computation methods.

\subsection{Hyperelasticity of Hill's class and generalized strains}
In the phenomenological modeling of elastic materials, it is nowadays fairly common to design nonlinear functions written in terms of the principal stretches or invariants to characterize nonlinear hyperelastic behaviors \cite{Dal2021,Xiang2020}. Alternatively, in a different approach proposed by Hill \cite{Hill1979}, the structure of Hooke's law is maintained, and the nonlinearity is manifested through generalized strains. In \cite{Hill1968}, Hill proposed the concept of general strains by adopting a smooth monotone scale function of the principal stretches. The first instantiation of this concept is the Seth-Hill \cite{Hill1968, Hill1979, Seth1964} or Doyle-Ericksen strains \cite{Doyle1956}, which constitute a strain family parameterized by a real number. With this strain family, Hill's hyperelasticity framework involves the Saint Venant-Kirchhoff model \cite[pp.~250-251]{Holzapfel2000} as well as the Hencky elasticity model \cite{Neff2016,Xiao2002}. This strain family also serves as the backbone for the aforementioned additive plasticity \cite{Papadopoulos1998,Papadopoulos2001,Lewandowski2023,Miehe2002,Miehe1998,Menzel2006}. Nevertheless, an apparent drawback of the Seth-Hill strain family, except the Hencky strain, is that they are not coercive, meaning that the strains remain finite for extreme deformations \cite{Korobeynikov2019}. To rectify this issue, coercivity is supplemented as an additional requirement for the scale function of generalized strains. In 1991, Curnier and Rakotomanana proposed a two-integer-parameter strain family that incorporates the strains (e.g. the Pelzer, Mooney, Wall, and Rivlin strains) not belonging to the Seth-Hill family \cite{Curnier1991}. This strain family was later extended by Darigani and Naghdabadi by allowing the parameters to take real values \cite{Darijani2010b}. Around the end of the twentieth century, Ba\v{z}ant and Itskov each individually designed a single-parameter strain family with motivations from different fields \cite{Bazant1998,Itskov2004}. In 2006, Curnier and Zysset proposed a two-parameter strain family that is easy to compute and delivers good approximations of the Seth-Hill family \cite{Curnier2006}. Later in 2013, Darijani and Naghdabadi introduced a two-parameter strain family represented in the exponential form \cite{Darijani2013}, which can be beneficial for describing the strain hardening in very large deformations. 

Furthermore, Hill’s modeling strategy has evolved in another aspect. Traditionally, a single pair of work-conjugate stress and strain is utilized, meaning the strain energy involves a single quadratic term of strain. Considering the Ogden model on the other hand, three terms, or six parameters, are needed to achieve satisfactory fitting results for realistic materials \cite{Dal2021,Destrade2022,Holzapfel2000}. Notably, the invariants of the Seth-Hill strains resemble the energy form of the Ogden model \cite{Moerman2016}, leading to the introduction of additional quadratic terms of different strains into the potential function for material modeling purposes \cite{Beex2019,Darijani2010b}.  Over the past several decades, we have witnessed the progressive enrichment of Hill's hyperelasticity and generalized strains. This ever-increasing modeling strategies offers new possibilities in modeling both elastic and inelastic materials.

\subsection{Structure and content of the article}
The body of this work is organized as follows. In Section \ref{sec:theory}, the visco-hyperelasticity theory is methodically developed. The emphases are placed on the generalized strains, kinematic assumptions of Green-Naghdi type, constitutive relations based on the Helmholtz and Gibbs potentials. Section \ref{sec:numerics} offers comprehensive discussions on consistent linearization, constitutive integration, and modular implementation. Section \ref{sec:examples} showcases numerical examples that underscore the efficacy of our proposed model in characterizing viscoelastic material properties. We draw conclusions and discuss directions for future studies in Section \ref{sec:conclusion}.

\section{Theory}
\label{sec:theory}
In this section, we construct a nonlinear visco-hyperelasticity theory based on the kinematic assumptions of the Green-Naghdi type. The generalized strains are utilized to treat the material nonlinearity within the framework of Hill's hyperelasticity. Within this framework, the existing finite linear viscoelasticity is recovered when the strain is specialized to the Green-Lagrange strain. At the outset of our presentation, the following clarifications are made. 
\begin{enumerate}
\item The summation convention is \textit{not} adopted, and all summations are expressed explicitly with a summation sign.
\item The norms of a vector $\bm W$ and a rank-two tensor $\bm A$ are defined as 
\begin{align*}
\left \lvert \bm W \right\rvert := \left( \bm W \cdot \bm W \right)^{\frac12} \quad \mbox{ and } \quad \left \lvert \bm A \right\rvert := \left( \mathrm{tr}[\bm A \bm A^T] \right)^{\frac12} = \left( \bm A : \bm A \right)^{\frac12}.
\end{align*}
\item The rank-two (rank-four) identity and zero tensors are denoted by $\bm I$ ($\mathbb I$) and $\bm O$ ($\mathbb O$), respectively. 
\end{enumerate}

\subsection{Kinematics and generalized strains}
\label{sec:kinematics-and-gen-strains}
We adopt the initial configuration of the body $\Omega_{\bm X} \subset \mathbb R^3$ as the reference configuration and assume it to be stress-free. The motion of the body is described by a family of diffeomorphisms $\bm x = \bm \varphi(\bm X, t) =  \bm \varphi_t(\bm X)$ parameterized by the time coordinate $t$. For a given time coordinate, $\bm \varphi_t$ maps a material point initially at $\bm X \in \Omega_{\bm X}$ to the location $\bm x$. We denote the current configuration of the body as $\Omega_{\bm x}^{t} := \bm \varphi(\Omega_{\bm X}, t)$. The displacement is defined as $\bm U(\bm X, t) := \bm \varphi(\bm X, t) - \bm X$, and the velocity is defined as $\bm V := d\bm U/dt$, where $d(\cdot)/dt$ designates the total time derivative. The deformation gradient and the Jacobian are defined as
\begin{align*}
\bm F := \frac{\partial \bm \varphi_t}{\partial \bm X} \quad \mbox{and} \quad J:=\mathrm{det}(\bm F).
\end{align*}
To distinguish bulk and shear responses, the deformation gradient is multiplicatively decomposed into volumetric and volume-preserving parts as
\begin{align}
\label{eq:Flory_decomposition}
\bm F = \bm F_{\mathrm{vol}} \tilde{\bm F}, \qquad \bm F_{\mathrm{vol}} := J^{1/3}\bm I, \qquad \tilde{\bm F} := J^{-1/3}\bm F.
\end{align}
The right Cauchy-Green tensor $\bm{C} := \bm{F}^T\bm{F}$ and its unimodular counterpart $\tilde{\bm C} := \tilde{\bm F}^{T} \tilde{\bm F}$ enjoy the following spectral representations,
\begin{align}
\label{eq:C_spectral}
\bm C = \sum_{a=1}^{3} \lambda_a^2 \bm M_a \quad \mbox{and} \quad \tilde{\bm C} = \sum_{a=1}^{3} \tilde{\lambda}_a^2 \bm M_a,
\end{align}
in which $\lambda_a$ ($\tilde{\lambda}_a := J^{-1/3} \lambda_a$) are the (modified) principal stretches, and $\bm M_a := \bm{N}_a \otimes \bm{N}_a$ are the self-dyads of the principal referential directions $\bm N_a$. In particular, the set of self-dyads $\{\bm M_a\}$ forms the basis for the tensor space of commutators of $\bm C$. Recall that
\begin{align*}
\bm F \bm N_a = \lambda_a \bm n_a,
\end{align*}
where the unit-length spatial vectors $\bm n_a$ are known as the principal spatial directions. With the above, the spectral representations of $\bm{F}$ and $\tilde{\bm F}$ take the form
\begin{align*}
\bm{F} = \sum_{a=1}^3 \lambda_a \bm{n}_a \otimes \bm{N}_a \quad \mbox{and} \quad \tilde{\bm{F}} = \sum_{a=1}^3 \tilde{\lambda}_a \bm{n}_a \otimes \bm{N}_a.
\end{align*}

In this study, strain plays a pivotal role in characterizing nonlinear material behaviors, and we consider a general Lagrangian strain represented as 
\begin{align}
\label{eq:Hill_strain}
\bm{E} := \sum_{a=1}^{3} E (\lambda_a) \bm{M}_a,
\end{align}
with $E: (0,\infty) \rightarrow \mathbb R$ being a scale function. The generalized strain $\tilde{\bm E}$ associated with the isochoric part of the deformation is analogously defined as
\begin{align}
\label{eq:iso_Hill_strain}
\tilde{\bm{E}} := \sum_{a=1}^{3} E (\tilde{\lambda}_a) \bm{M}_a.
\end{align}
The Eulerian counterparts of the above two Lagrangian strains can be defined analogously by replacing $\bm M_a$ with $\bm{n}_a \otimes \bm{n}_a$. It can be verified that the generalized strains defined above are objective and isotropy \cite{Holzapfel2000}. It is demanded by Hill \cite{Hill1979} that, to make the above two definitions suitable, the scale function $E$ has to be twice differentiable\footnote{Since $E$ is a univariate function, we use $E'$ and $E''$ to denote its first and second derivatives.}, monotonically increasing (i.e., $E' > 0$), and satisfy
\begin{align}
\label{eq:E_property}
E(1) = 0 \quad \mbox{and} \quad E'(1) = 1.
\end{align}
The monotonicity is physically reasonable and ensures the inverse of the function $E$ exists. It is also termed as the regularity condition \cite{Curnier1991}. The condition \eqref{eq:E_property}$_1$ guarantees the measure vanishes for the undeformed state. The condition \eqref{eq:E_property}$_2$ renders the measure compatible with the infinitesimal strain and is known as the normality condition \cite{Ogden1997}. The well-known Seth-Hill strain family satisfies the above attributes. Nevertheless, it is found that, except for the Hencky strain \cite{Neff2016,Bruhns2020}, the rest members of the strain family exhibit erratic behaviors when the deformation ratio asymptotically approaches infinity or zero. Therefore, two additional conditions were supplemented, demanding the scale function $E$ to be coercive \cite{Korobeynikov2019}, i.e.,
\begin{align}
\label{eq:physical_property}
\lim_{\lambda \rightarrow \infty} E(\lambda) \rightarrow +\infty \quad \mbox{and} \quad \lim_{\lambda \rightarrow 0} E(\lambda) \rightarrow -\infty.
\end{align}
They require the strain to approach infinity when extreme deformation presents. Furthermore, the strain is tension-compression symmetric if the scale function $E$ satisfies
\begin{align}
\label{eq:tension-compression-symmetry}
E(\lambda^{-1}) = - E(\lambda).
\end{align}
While the symmetry condition \eqref{eq:tension-compression-symmetry} is mathematically elegant \cite{Bazant1998,Korobeynikov2019}, it is important to note that materials typically behave asymmetrically in tension and compression \cite{Du2020,Latorre2017,Moerman2016}. In this regard, certain two-parameter generalized strains enable one to flexibly control the model behaviors in the tension and compression branches separately, which can be desirable in constitutive modeling. In the following, we present representative examples of the generalized strains listed in chronological order.

\paragraph{\textbf{Seth-Hill strain family}}
The Seth-Hill, or Doyle-Ericksen, strain family \cite{Doyle1956,Seth1964} is defined as
\begin{align*}
E(\lambda) = E^{(m)}_{\mathrm{SH}}(\lambda) := 
\begin{cases}
\frac{1}{m}\left( \lambda^m -1\right),&m\neq 0,\\[1.0em]
\mathrm{ln}\lambda ,&m=0.
\end{cases}
\end{align*} 
It is the first instantiation of Hill's generalized strain definition and is parameterized by a single parameter. This strain family incorporates several classical strains, including the Almansi ($m=-2$), Hill ($m=-1$), Hencky or logarithmic ($m=0$), Biot ($m=1$), and Green-Lagrange ($m=2$) strains. Except for the Hencky strain, the Seth-Hill strain family does not satisfy the coercive property \eqref{eq:physical_property} (see Figure \ref{fig:strain_stretch}), which is viewed as its major deficiency and inspires the subsequent development of new generalized strains.

\paragraph{\textbf{Curnier-Rakotomanana strain family}}
\label{ex:Two-parameter_CR_strains}
In the early 1990s, Curnier and Rakotomanana \cite{Curnier1991} further generalized the Seth-Hill strain family as
\begin{align*}
E(\lambda) = E_{\mathrm{CR}}^{(m, n)}(\lambda) := \frac{1}{m+n} \left( \lambda^m -\lambda^{-n} \right) = \frac{m}{m+n}E_{\mathrm{SH}}^{(m)}(\lambda) + \frac{n}{m+n}E_{\mathrm{SH}}^{(-n)}(\lambda).
\end{align*}
The authors considered $m$ and $n$ as integer parameters, taking the value of $1$ or $2$ \cite{Curnier1991}. This essentially integrates the Pelzer ($m=n=1$), Mooney ($m=n=2$), Wall ($m=1$, $n=2$), and Rivlin ($m=2$, $n=1$) strains into a unified format. Since the newly incorporated strains are named after rubber elasticians, it is also termed as the `rubber' family. Later, Darijani and Naghdabadi extended the definition by allowing the two parameters to take non-negative real values with $mn>0$ \cite{Darijani2010}. The Hencky strain is supplemented as a natural consequence of L'H\^{o}pital's rule when $m=n=0$. It can be shown that the scale function satisfies the coercive property \eqref{eq:physical_property}. Also, the two parameters allow one to separately describe the tension and compression material behavior. Notably, studies show that this generalized strain family leads to satisfactory quality of fit for several soft materials \cite{Darijani2010b}.
 
\paragraph{\textbf{Ba\v{z}ant-Itskov strain family}}
In the late 1990s, Ba\v{z}ant introduced a family of generalized strains which can now be represented as
\begin{align*}
E(\lambda) = E_{\mathrm{BI}}^{(m)}(\lambda) := \frac{1}{2m}\left( \lambda^m -\lambda^{-m}\right) \quad \mbox{with} \quad m \neq 0.
\end{align*}
His original goal was to improve the asymptotic approximation of the Hencky strain over the Seth-Hill strains \cite{Bazant1998}. With a different motivation, Itskov derived the same form based on an analysis of an elastoplasticity problem \cite{Itskov2004}. Again, the Hencky strain can be supplemented to complete the definition when $m=0$. This family possesses the coercive and symmetry properties. Apparently, it is a sub-family of the previous Curnier-Rakotomanana strain family when the two parameters are identical. It is also worth mentioning that $\sum_{a=1}^{3}E_{\mathrm{BI}}^{(1)}(\lambda_a)\bm{n}_a \otimes \bm{N}_a$ leads to the two-point tensor introduced by B\"{o}ck and Holzapfel \cite{Boeck2004}. Despite several appealing mathematical attributes, the lack of objectivity limits its applicability in constitutive modeling \cite{Ciarletta2011}.

\paragraph{\textbf{Curnier-Zysset strain family}}
In 2006, Curnier and Zysset proposed a strain family that is easy to compute and approximate the Seth-Hill family \cite{Curnier2006}. It can be fitted into the format of the generalized strain \eqref{eq:Hill_strain} as
\begin{align*}
E(\lambda)  = E_{\mathrm{CZ}}^{(m)}(\lambda)  := \frac{2+m}{8} \lambda^2 - \frac{2-m}{8} \lambda^{-2} - \frac{m}{4}, \quad \mbox{with} \quad -2 \leq m \leq 2.
\end{align*}
In their study, the well-posedness of the hyperelastic model based on this strain family was analyzed by delimiting the stretch region where the rank-one convexity holds. It was found that this generalized strain family may substantially extend the region for the model to remain rank-one convex, in comparison with the Green-Lagrange strain. This manifests the theoretical significance of developing new concepts of strains. 

\paragraph{\textbf{Darijani-Naghdabadi exponential strain family}}
In 2013, Darijani and Naghdabadi \cite{Darijani2013} introduced a strain family as
\begin{align*}
E(\lambda) = E_{\mathrm{DN}}^{(m,n)}(\lambda) := \frac{1}{m+n}\left( e^{m(\lambda-1)} - e^{n(\lambda^{-1}-1)} \right), \quad \mbox{with} \quad m,n > 0.
\end{align*}
It can be viewed as an extension of the two-parameter Curnier-Rakotomanana strain family using an exponential function form. This choice can be propitious for characterizing the non-Gaussian region of rubber-like materials \cite{Dal2021} and the strain hardening in biological tissues \cite{Chagnon2015}.

\begin{figure}
\begin{center}
\begin{tabular}{c}
\includegraphics[angle=0, trim=120 160 120 120, clip=true, scale = 0.28]{./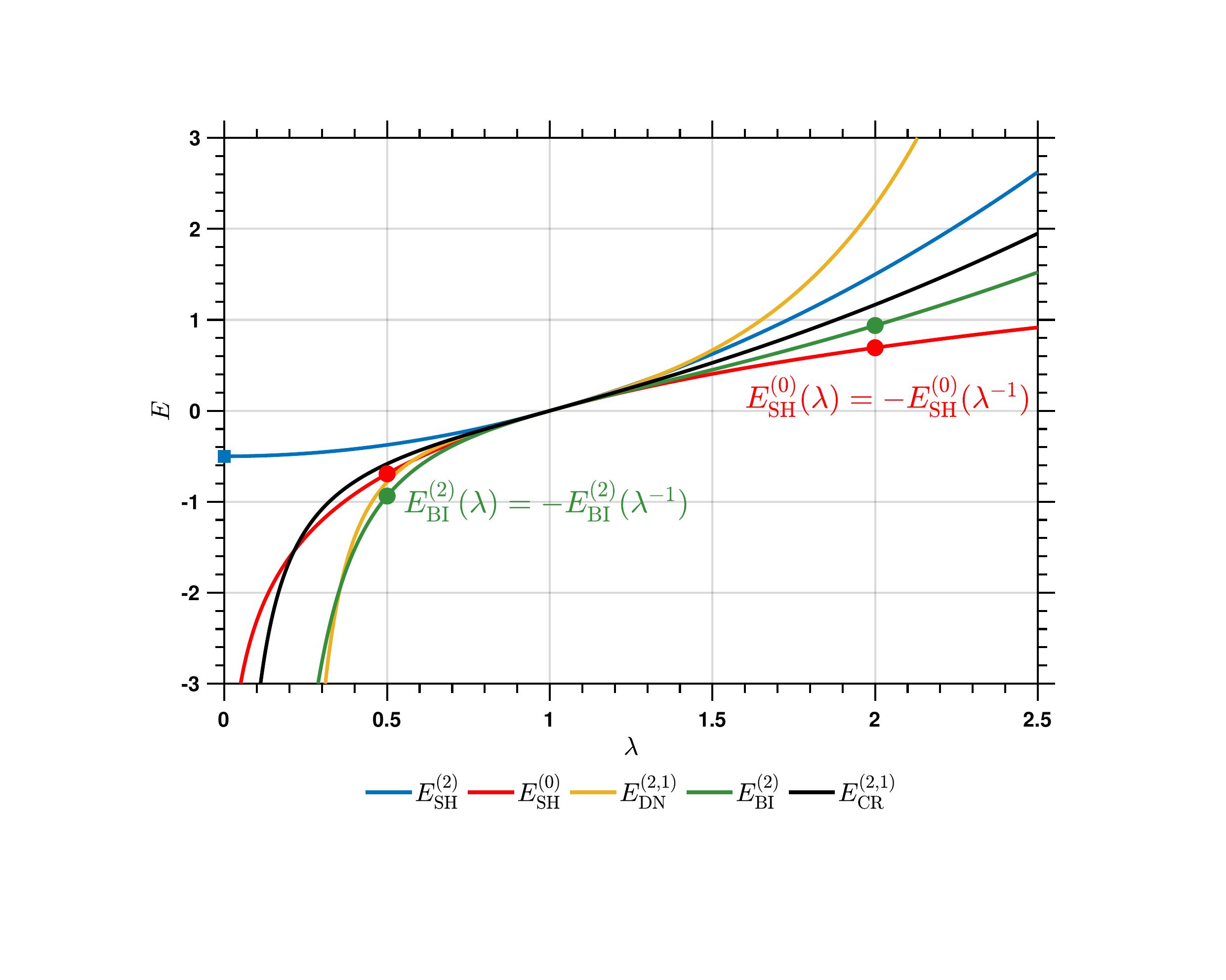}
\end{tabular}
\end{center}
\caption{Illustration of different scale functions. The Green-Lagrange strain (blue curve) gives a finite value when the stretch approaches zero. The Hencky scale function (red curve) and the Ba\v{z}ant-Itskov scale function (green curve) satisfy the symmetry property \eqref{eq:tension-compression-symmetry}, as is indicated by the red and green dots, respectively. }
\label{fig:strain_stretch}
\end{figure}

In our following discussion, we use $\bm E$ as a generic strain without specifying its type. When it becomes necessary, a subscript and superscript are used to indicate the specific strain. For example, $\bm E^{(2)}_{\mathrm{SH}}$ designates the strain of the Seth-Hill family with the parameter $m=2$ (a.k.a. the Green-Lagrange strain). In Figure \ref{fig:strain_stretch}, five representative scale functions are depicted, with the symmetry and coercive properties indicated by markers.

\begin{remark}
The definition \eqref{eq:Hill_strain} given here is based on the scale function $E$ of the principal stretch. The generalized strain can be equivalently represented in terms of the stretch tensor $\bm U := \sqrt{\bm C}$, which may be convenient in computation. For example, the Curnier-Zysset strain family was originally defined as
\begin{align*}
\bm E^{(m)}_{\mathrm{CZ}} := \frac{2+m}{8} \bm C  - \frac{2-m}{8} \bm C^{-1} - \frac{m}{4}\bm I,
\end{align*}
and their motivation for the design partly comes from avoiding spectral decompositions in practical calculations.
\end{remark}

\subsection{Projection tensors}
\label{sec:Projection_tensors}
In this section, we introduce three projection tensors that are necessary for constitutive modeling. Considering three distinct principal stretches $\left \{ \lambda_a \right \}$ for $a=1,2,3$, the derivative of $\bm{E}$ with respect to $\bm{C}$ leads to a rank-four tensor as
\begin{align}
\label{eq:Q}
\mathbb{Q} := 2\frac{\partial \bm{E}}{\partial \bm{C}} = \sum_{a=1}^{3} d_{a} \bm{M}_a \otimes \bm{M}_a + \sum_{a=1}^3 \sum_{b\neq a}^3 \vartheta_{ab}  \bm M_a \odot \bm M_b,
\end{align} 
in which
\begin{align}
\label{eq:d_theta}
d_a := 2 \frac{\partial E}{\partial (\lambda^2)}(\lambda_a) = \frac{1}{\lambda_a} E'(\lambda_a), \quad \vartheta_{ab} := 2\frac{ E(\lambda_a) - E(\lambda_b) }{\lambda_a^2 - \lambda_b^2},
\end{align}
and the symbol $\odot$ represents the following tensorial operation (see, e.g., \cite[p.~254]{Holzapfel2000})
\begin{align}
\label{eq:odot}
\bm M_a \odot \bm M_b := \frac12 \left( \bm N_a \otimes \bm N_b \otimes \bm N_a \otimes \bm N_b + \bm N_a \otimes \bm N_b \otimes \bm N_b \otimes \bm N_a  \right).
\end{align}
When two or three principal stretches are equal, the difference quotient in \eqref{eq:d_theta}$_2$ is replaced by
\begin{align*}
\lim\limits_{\lambda_b \to \lambda_a} \vartheta_{ab} = d_a,
\end{align*}
with the aid of L'H\^{o}pital's rule. Here we state and prove a useful result related to $\mathbb{Q}$.
\begin{lemma}
\label{lemma:Q_inv}
There exists a rank-four tensor $\mathbb Q^{-1}$ such that $\mathbb Q : \mathbb Q^{-1} = \mathbb Q^{-1} : \mathbb Q = \mathbb I$.
\end{lemma}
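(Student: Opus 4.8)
The plan is to regard $\mathbb{Q} = 2\,\partial \bm E / \partial \bm C$ as a self-adjoint linear operator on the six-dimensional space $\mathrm{Sym}$ of symmetric rank-two tensors, and to exhibit an orthogonal eigenbasis of $\mathrm{Sym}$ in which $\mathbb{Q}$ is diagonal with strictly positive entries; invertibility then follows at once and the inverse can be written down explicitly. First I would record the basis of $\mathrm{Sym}$ adapted to the eigenframe of $\bm C$, namely the three self-dyads $\bm M_a = \bm N_a \otimes \bm N_a$ together with the three symmetric off-diagonal dyads $\bm G_{ab} := \bm N_a \otimes \bm N_b + \bm N_b \otimes \bm N_a$ for $a<b$. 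Because the $\bm N_a$ are orthonormal, these six tensors are mutually orthogonal with respect to $\bm A : \bm B$, and hence form a basis of $\mathrm{Sym}$.

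The central step is to compute the action of $\mathbb{Q}$ on this basis. Using $\bm M_c : \bm M_a = \delta_{ac}$ and $\bm M_c : \bm G_{ab} = 0$ for $a\neq b$, the first sum in \eqref{eq:Q} returns $d_a \bm M_a$ on $\bm M_a$ and annihilates each $\bm G_{ab}$. For the second sum I would unfold the definition \eqref{eq:odot}: a direct contraction shows that $(\bm M_p \odot \bm M_q):\bm T = \left( \bm N_p \cdot \bm T \bm N_q \right) \bm N_p \otimes \bm N_q$ for any symmetric $\bm T$, so the off-diagonal part annihilates every $\bm M_c$ and, using $\vartheta_{ab}=\vartheta_{ba}$, maps $\bm G_{ab}$ to $\vartheta_{ab}\bm G_{ab}$. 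Collecting terms,
\begin{align*}
\mathbb Q : \bm M_a = d_a\, \bm M_a, \qquad \mathbb Q : \bm G_{ab} = \vartheta_{ab}\, \bm G_{ab},
\end{align*}
so $\mathbb Q$ is diagonal in the chosen orthogonal basis with eigenvalues $\{d_a\}_{a=1}^{3}$ and $\{\vartheta_{ab}\}_{a<b}$.

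It then remains to show that all six eigenvalues are nonzero; in fact they are strictly positive. From \eqref{eq:d_theta} and the standing assumptions $\lambda_a>0$ and $E'>0$ one has $d_a = E'(\lambda_a)/\lambda_a > 0$. For $\vartheta_{ab}$ with $\lambda_a \neq \lambda_b$, the strict monotonicity of $E$ makes $E(\lambda_a)-E(\lambda_b)$ share the sign of $\lambda_a-\lambda_b$, which coincides with the sign of $\lambda_a^2-\lambda_b^2$ since both stretches are positive; hence $\vartheta_{ab}>0$, while the coincident case gives $\vartheta_{ab}=d_a>0$ through the L'Hôpital limit already recorded. Consequently $\mathbb Q$ is invertible on $\mathrm{Sym}$, with
\begin{align*}
\mathbb Q^{-1} = \sum_{a=1}^{3} \frac{1}{d_a}\, \bm M_a \otimes \bm M_a + \sum_{a=1}^{3}\sum_{b\neq a}^{3} \frac{1}{\vartheta_{ab}}\, \bm M_a \odot \bm M_b,
\end{align*}
and one verifies $\mathbb Q : \mathbb Q^{-1} = \mathbb Q^{-1} : \mathbb Q = \mathbb I$, where $\mathbb I = \sum_a \bm M_a \otimes \bm M_a + \sum_a \sum_{b\neq a} \bm M_a \odot \bm M_b$ is the identity on symmetric tensors. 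I expect the main subtlety to be bookkeeping rather than conceptual: one must keep track that $\mathbb Q$ is being inverted as an operator on $\mathrm{Sym}$, so that $\mathbb I$ is the symmetric (not the full nine-dimensional) rank-four identity, and one must accommodate coincident principal stretches, where the eigenframe $\{\bm N_a\}$ is no longer unique but the L'Hôpital values keep the coefficients well-defined and positive on the degenerate subspace.
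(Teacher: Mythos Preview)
Your proof is correct and follows essentially the same route as the paper: both exploit the spectral structure of $\mathbb Q$ in the basis $\{\bm M_a,\, \bm M_a \odot \bm M_b\}$ to write down the explicit inverse with coefficients $1/d_a$ and $1/\vartheta_{ab}$, and both rely on the same orthogonality identities among these basis elements to verify $\mathbb Q:\mathbb Q^{-1}=\mathbb I$. The only real difference is packaging: the paper first invokes the inverse function theorem to motivate that $\partial \bm C/\partial \bm E$ exists and then checks the contraction identities directly, whereas you frame $\mathbb Q$ as a diagonal self-adjoint operator on $\mathrm{Sym}$ and argue positivity of each eigenvalue from the monotonicity of $E$, which in fact yields the slightly stronger statement that $\mathbb Q$ is positive definite on $\mathrm{Sym}$.
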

\begin{proof}
The monotonicity and smoothness of the scale function $E$ guarantee that its inverse $E^{-1}$ exists and is differentiable, due to the inverse function theorem. This indicates that the derivative $\partial \bm C/\partial \bm E$ is well-defined. We may explicitly construct $\mathbb Q^{-1}$ as
\begin{align*}
\mathbb Q^{-1} = \sum_{a=1}^{3} \frac{\lambda_a}{E'(\lambda_a)} \bm{M}_a \otimes \bm{M}_a + \sum_{a=1}^3 \sum_{b\neq a}^3 \frac{\lambda_a^2 - \lambda_b^2}{ 2(E(\lambda_a) - E(\lambda_b)) } \bm{M}_a \odot \bm{M}_b.
\end{align*}
When there are stretches taking identical values, the difference quotient term in the above formula needs to be replaced by $1/d_a$. One may conveniently verify the following identities,
\begin{align*}
& \left( \bm M_a \otimes \bm M_a \right) : \left( \bm M_b \otimes \bm M_b \right) = 
\begin{cases}
\mathbb O, & a \neq b, \\[0.5em]
\bm M_a \otimes \bm M_a, & a=b,
\end{cases} \displaybreak[2] \\
& \left( \bm M_a \odot \bm M_b \right) : \left( \bm M_c \odot \bm M_d \right) = 
\begin{cases}
\frac12 \bm M_a \odot \bm M_b, & a =c \neq b = d, \\[0.5em]
\frac12 \bm M_a \odot \bm M_b, & a = d \neq b = c, \\[0.5em]
\mathbb O, & \mbox{otherwise with } a \neq b,  c \neq d,
\end{cases} \displaybreak[2] \\
& \left( \bm M_a \otimes \bm M_a \right) : \left( \bm M_b \odot \bm M_c \right) = \mathbb O, \quad b \neq c.
\end{align*}
With the above results, direct calculation of the contraction gives
\begin{align*}
\mathbb Q : \mathbb Q^{-1} = \mathbb Q^{-1} : \mathbb Q = \sum_{a=1}^{3} \bm{M}_a \otimes \bm{M}_a + \sum_{a=1}^3 \sum_{b\neq a}^3 \bm{M}_a \odot \bm{M}_b = \mathbb I,
\end{align*}
which completes the proof.
\end{proof}

The second derivative of $\bm{E}$ with respect to $\bm{C}$ yields a rank-six tensor
\begin{align}
\label{eq:L}
\mathbb{L} :=& 2\frac{\partial \mathbb{Q}}{\partial \bm{C}} = 4\frac{\partial^2 \bm{E}}{\partial \bm{C} \partial \bm{C}} \nonumber \\
=& \sum^{3}_{a=1} f_{a} \bm{M}_a\otimes \bm{M}_a\otimes \bm{M}_a + \sum^{3}_{a=1} \sum^{3}_{b \neq a} \xi_{ab} \left( \mathbb{H}_{abb} + \mathbb{H}_{bab} + \mathbb{H}_{bba} \right) + \sum^{3}_{a=1} \sum^{3}_{b \neq a} \sum^{3}_{\substack{ c \neq a \\  c \neq b} } \eta \mathbb{H}_{abc},
\end{align}
with
\begin{align}
\label{eq:fa}
f_a :=& 4\frac{\partial^2 E}{\partial (\lambda^2) \partial (\lambda^2)}(\lambda_a) = -\frac{1}{\lambda_a^3}E'(\lambda_a) + \frac{1}{\lambda_a^2} E''(\lambda_a), \displaybreak[2] \\
\label{eq:xiab}
 \xi_{ab}:=& \frac{ \vartheta_{ab}- d_b }{2(\lambda_a^2 - \lambda_b^2)}, \displaybreak[2] \\
\label{eq:eta}
\eta :=& \sum_{a=1}^{3} \sum_{b \neq a}^{3} \sum_{\substack{ c \neq a \\  c \neq b} }^3 \frac{E(\lambda_a)}{2(\lambda_a^2-\lambda_b^2)(\lambda^2_a - \lambda^2_c)}, \displaybreak[2] \\
\mathbb H_{abc} :=& \bm N_a \otimes \bm N_b \otimes \bm N_c \otimes \bm N_a \otimes \bm N_b \otimes \bm N_c + \bm N_a \otimes \bm N_b \otimes \bm N_c \otimes \bm N_a \otimes \bm N_c \otimes \bm N_b \nonumber \displaybreak[2] \\
&+ \bm N_a \otimes \bm N_b \otimes \bm N_a \otimes \bm N_c \otimes \bm N_b \otimes \bm N_c + \bm N_a \otimes \bm N_b \otimes \bm N_a \otimes \bm N_c \otimes \bm N_c \otimes \bm N_b \nonumber \displaybreak[2] \\
& + \bm N_b \otimes \bm N_a \otimes \bm N_c \otimes \bm N_a \otimes \bm N_b \otimes \bm N_c + \bm N_b \otimes \bm N_a \otimes \bm N_c \otimes \bm N_a \otimes \bm N_c \otimes \bm N_b \nonumber \displaybreak[2] \\
\label{eq:H}
& + \bm N_b \otimes \bm N_a \otimes \bm N_a \otimes \bm N_c \otimes \bm N_b \otimes \bm N_c + \bm N_b \otimes \bm N_a \otimes \bm N_a \otimes \bm N_c \otimes \bm N_c \otimes \bm N_b.
\end{align}
When there exist identical stretches, the coefficients in \eqref{eq:xiab} and \eqref{eq:eta} are replaced by the following
\begin{align*}
\lim\limits_{\lambda_b \to \lambda_a} \xi_{ab} = \frac18 f_a, \quad \lim\limits_{\substack{\lambda_b \to \lambda_a \\ \lambda_c \neq \lambda_a}} \eta = \xi_{ca}, \quad \mbox{and} \quad \lim\limits_{\substack{\lambda_b \to \lambda_a \\ \lambda_c \to \lambda_a}} \eta = \frac{1}{8} f_{a}.
\end{align*}
The formulas of $\mathbb{Q}$ and $\mathbb{L}$ can be derived based on the principal axes method \cite{Chadwick1971}. In that method, a rate formulation is established and the components of the projection tensors are identified by equating coefficients \cite[pp.~258-259]{Holzapfel2000}. In this work, we use an alternative approach and derive the formulas by straightforwardly considering the derivatives of the principal axes with respect to the deformation tensor. Our derivations are detailed in \ref{ap:Q} and \ref{ap:L}, respectively. The first and second derivatives of $\tilde{\bm{E}}$ with respect to $\tilde{\bm{C}}$ are denoted as 
\begin{align}
\label{eq:tilde_Q_L}
\tilde{\mathbb{Q}} := 2\frac{\partial \tilde{\bm{E}}}{\partial \tilde{\bm{C}}} \quad \mbox{and} \quad \tilde{\mathbb{L}} := 4\frac{\partial^2 \tilde{\bm{E}}}{\partial \tilde{\bm{C}} \partial \tilde{\bm{C}}} = 2\frac{\partial \tilde{\mathbb{Q}}}{\partial \tilde{\bm{C}}}.
\end{align}
The projections $\tilde{\mathbb Q}$ and $\tilde{\mathbb L}$ can be explicitly represented by the formulas \eqref{eq:Q} and \eqref{eq:L} with the coefficients evaluated at $\tilde{\lambda}_a$. 

The partial derivative of $\tilde{\bm{C}}$ with respect to $\bm{C}$  reads
\begin{align}
\label{eq:deviatoric_projection}
\frac{\partial \tilde{\bm{C}}}{\partial \bm{C}} := J^{-\frac23} \mathbb{P}^T, \quad \mbox{with} \quad \mathbb{P} := \mathbb{I} - \frac{1}{3} \bm{C}^{-1} \otimes \bm{C}.
\end{align}
The projection tensor $\mathbb{P}$ furnishes the deviatoric behavior in the Lagrangian description \cite[p.~230]{Holzapfel2000}.

\subsection{Hyperelasticity of Hill's class}
\label{sec:Hyperelasticity}
The design of the strain energy function is the crux of material modeling. Different from phenomenological models based on invariants or principal stretches, the hyperelastic model of Hill's class maintains the quadratic functional form while describing nonlinear response by using a generalized strain. Based on this philosophy, the hyperelastic strain energy $\Psi(\bm C)$ can be represented as
\begin{align}
\label{eq:hill-conv-hyperelasticity}
\Psi(\bm C) = \frac12 \bm E : \mathbb{E} : \bm E,
\end{align}
where $\bm{E}$ is a generalized Lagrangian strain, and $\mathbb E$ is a rank-four stiffness tensor. The second Piola-Kirchhoff stress is given by
\begin{align*}
\bm S = 2\frac{\partial \Psi}{\partial \bm C} =  \bm T : \mathbb Q, \quad \mbox{with} \quad \bm T = \frac{\partial \Psi}{\partial \bm E} = \mathbb E : \bm E.
\end{align*}
If the material is isotropic, the stiffness tensor $\mathbb E$ takes the form
\begin{align*}
\mathbb E = 2 \mu \left( \mathbb I - \frac13 \bm I \otimes \bm I \right) + \kappa \bm I \otimes \bm I.
\end{align*}
Consequently, one has 
\begin{align}
\label{eq:hill-hyperelasticity-type-1}
\Psi(\bm C) = \mu \left\lvert \mathrm{dev}\bm E \right\rvert^2 + \frac{\kappa}{2} \left( \mathrm{tr} \bm E \right)^2 \quad \mbox{and} \quad \bm T = 2\mu \mathrm{dev} \bm E + \kappa (\mathrm{tr}\bm E) \bm I,
\end{align}
where $\mathrm{tr} \left( \cdot \right) := \left( \cdot \right) : \bm I$ and $\mathrm{dev} \left( \cdot \right) := \left( \cdot \right) - \mathrm{tr}\left( \cdot \right) \bm I / 3$. Due to the quadratic form of the energy, the conjugate pair of $\bm T$ and $\bm E$ maintains the Hookean stress-strain relationship. The second Piola-Kirchhoff stress is obtained from the stress-like tensor $\bm T$ by invoking the projection tensor $\mathbb Q$ previously defined in \eqref{eq:Q}, and this procedure is sometimes referred to as the geometric postprocessing \cite{Miehe2002}. With a specific choice of the strain $\bm E$, the above furnishes a specific hyperelasticity model of Hill's class.

It has also been recommended to modify the strain energy \eqref{eq:hill-hyperelasticity-type-1}$_1$ by replacing $\mathrm{tr}\bm E$ with $\ln(J)$ \cite{Xiao2011}, leading to the following potential,
\begin{align}
\label{eq:hill-hyperelasticity-type-2}
\Psi(\bm C) = \mu \left\lvert \tilde{\bm E} \right\rvert^2 + \Psi_{\mathrm{vol}}(J),
\end{align}
with $\Psi_{\mathrm{vol}}(J) = \kappa \ln(J)^2 / 2$. This modified model has been recognized to enjoy improved material stability \cite[p.~251]{Holzapfel2000}. If the strain in \eqref{eq:hill-hyperelasticity-type-2} is specialized to one of the Seth-Hill strain family, the first term in \eqref{eq:hill-hyperelasticity-type-2} can be expressed as
\begin{align}
\label{eq:hill-hyperelasticity-connection-ogden}
\mu \left\lvert \tilde{\bm E}_{\mathrm{SH}}^{(m)} \right\rvert^2 = \frac{\mu}{m^2} \sum_{a=1}^{3}\left( \tilde{\lambda}_a^{m}-1\right)^2,
\end{align}
which closely resembles the isochoric elastic response function of the Ogden model \cite[p.~237]{Holzapfel2000}.

\begin{figure}
\begin{center}
\includegraphics[angle=0, trim=10 210 40 50, clip=true, scale = 0.38]{./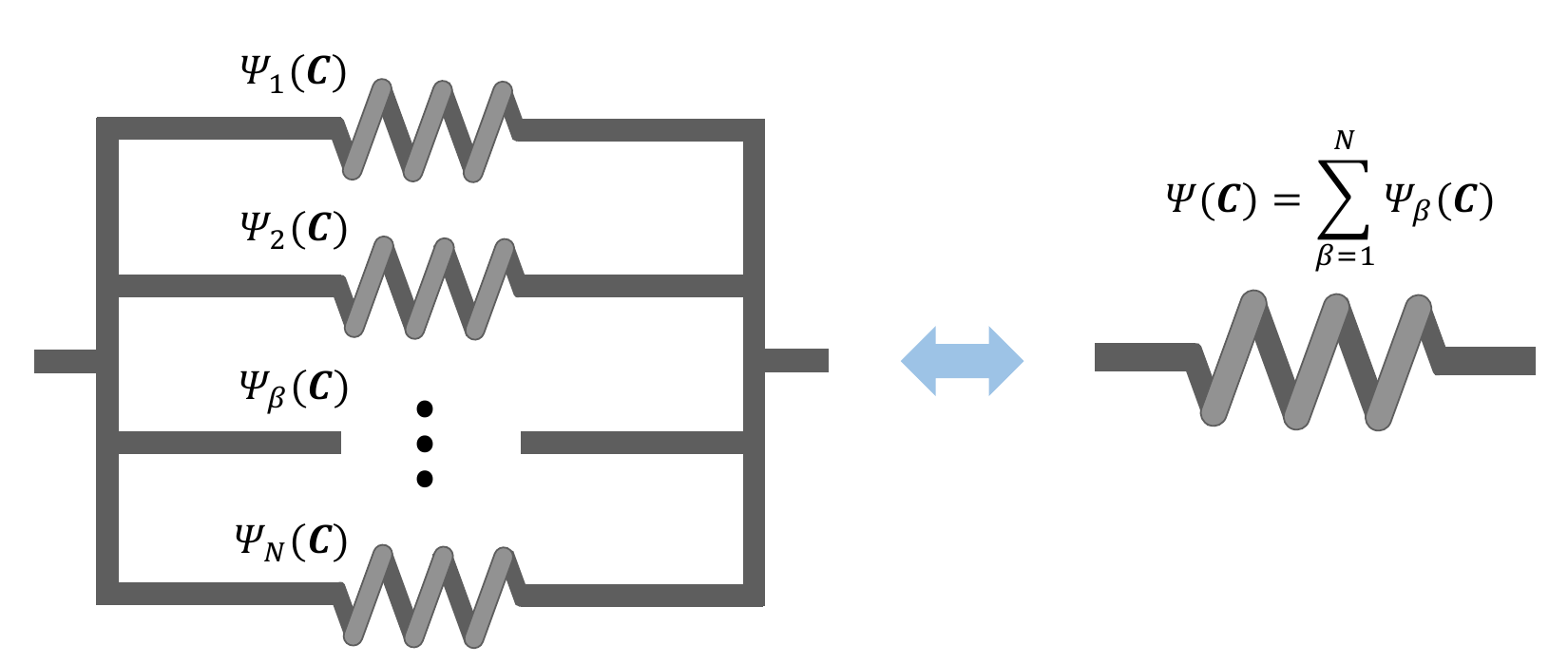}
\end{center}
\caption{The interpretation of the generalized hyperelastic strain energy function \eqref{eq:psi-def-fused-type-1} as a device of multiple springs of Hookean type in parallel. Each one of the Hookean spring is described by a single quadratic energy of a specific strain. Their combined mechanical behavior can be viewed as a single spring whose energy is the sum of quadratic strain energies of different strains.}
\label{fig:generalized_spring}
\end{figure}

We notice that there is only one term, or two material parameters, involved in defining the potential \eqref{eq:hill-hyperelasticity-connection-ogden}. Considering the Ogden model used for practical data fitting, three terms, or six parameters, are typically needed to provide high quality of fit to experimental data \cite{Dal2021,Ogden2004}. Thereby, there is a need to enrich the form of the aforementioned Hill's hyperelasticity model by introducing additional quadratic strain energies of different strains into the free energy. This strategy has been explored by using multiple Curnier-Rakotomanana strains \cite{Darijani2010b} or Seth-Hill strains \cite{Beex2019}, both of which demonstrated improved performance in the nonlinear data fitting. Here, we consider an energy represented as
\begin{align}
\label{eq:psi-def-fused-type-1}
\Psi(\bm C)=\sum_{\beta=1}^{N}\Psi_{\beta}(\bm C), \quad \mbox{with} \quad \Psi_{\beta}(\bm C) := \frac12 \bm{E}_{\beta} :\mathbb E_{\beta} : \bm{E}_{\beta}.
\end{align}                                                                                                                                                                                                                                                                                                      
In the above, the strain energy $\Psi$ is constructed as the sum of multiple quadratic energies whose associated strains $\bm E_{\beta}$ can be any member of any generalized strain family outlined in Section \ref{sec:kinematics-and-gen-strains}. The number of quadratic energy terms is $N$. An interpretation of this generalization is made in Figure \ref{fig:generalized_spring}. As a generalization of the modified strain energy \eqref{eq:hill-hyperelasticity-type-2}, we consider the potential as
\begin{align}
\label{eq:psi-def-fused-type-2}
\Psi(\bm C) = \sum_{\beta=1}^{N} \mu_{\beta} \left\lvert \tilde{\bm E}_{\beta} \right\rvert^2 + \Psi_{\mathrm{vol}}(J).
\end{align}
With \eqref{eq:hill-hyperelasticity-connection-ogden}, we may view the above strain energy formally similar to the $N$-term Ogden model in the sense that $N$ different nonlinear functions of stretch constitute the overall potential. Henceforth, we will refer to the model given by \eqref{eq:hill-conv-hyperelasticity} or \eqref{eq:hill-hyperelasticity-type-2} as the conventional Hill's hyperelasticity, and the elastic model given by \eqref{eq:psi-def-fused-type-1} or \eqref{eq:psi-def-fused-type-2} will be referred to as the generalized Hill's hyperelasticity with multiple terms.

\begin{figure}
\begin{center}
\begin{tabular}{cc}
\includegraphics[angle=0, trim=120 90 120 120, clip=true, scale = 0.2]{./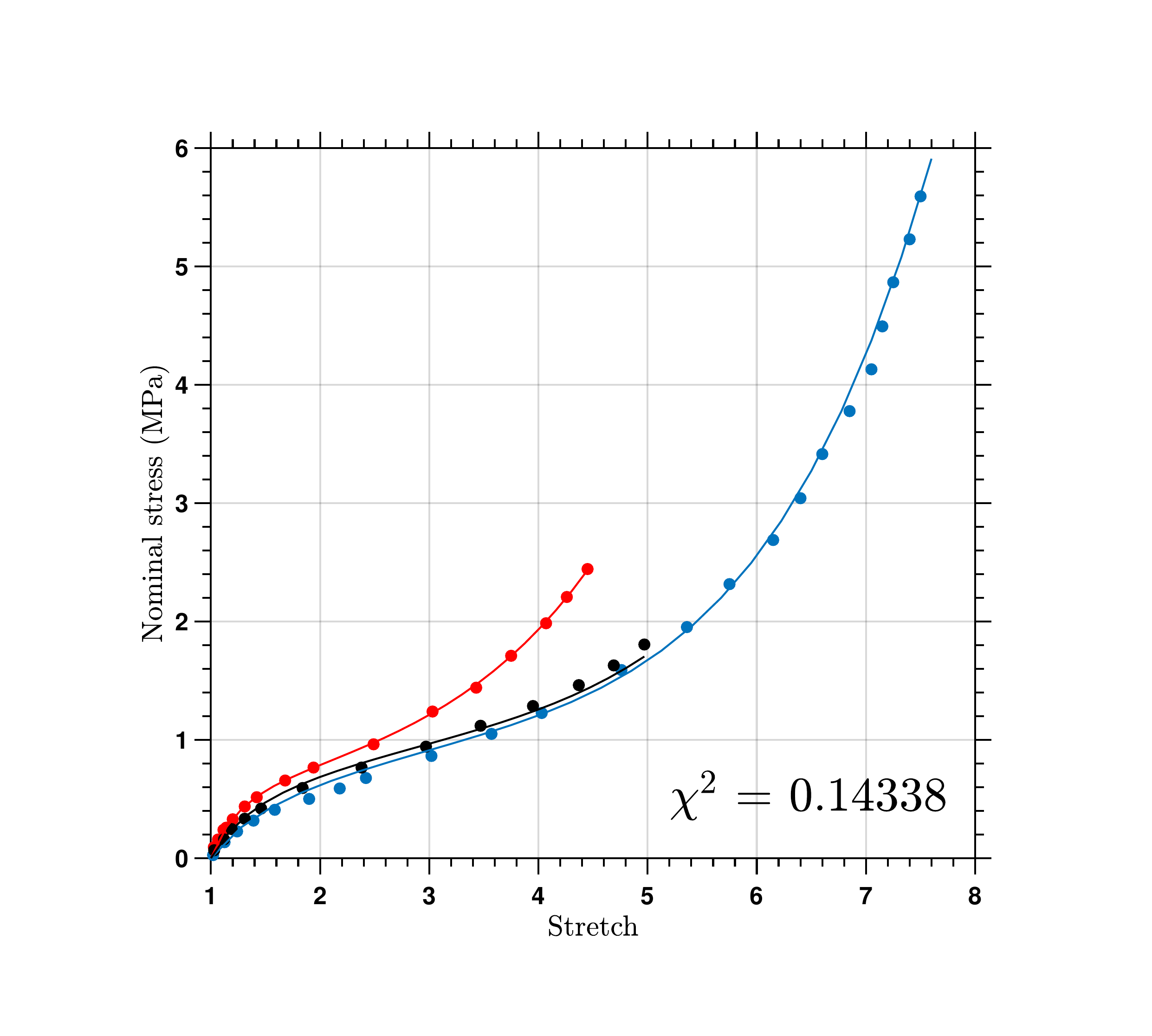} &
\includegraphics[angle=0, trim=120 90 120 120, clip=true, scale = 0.2]{./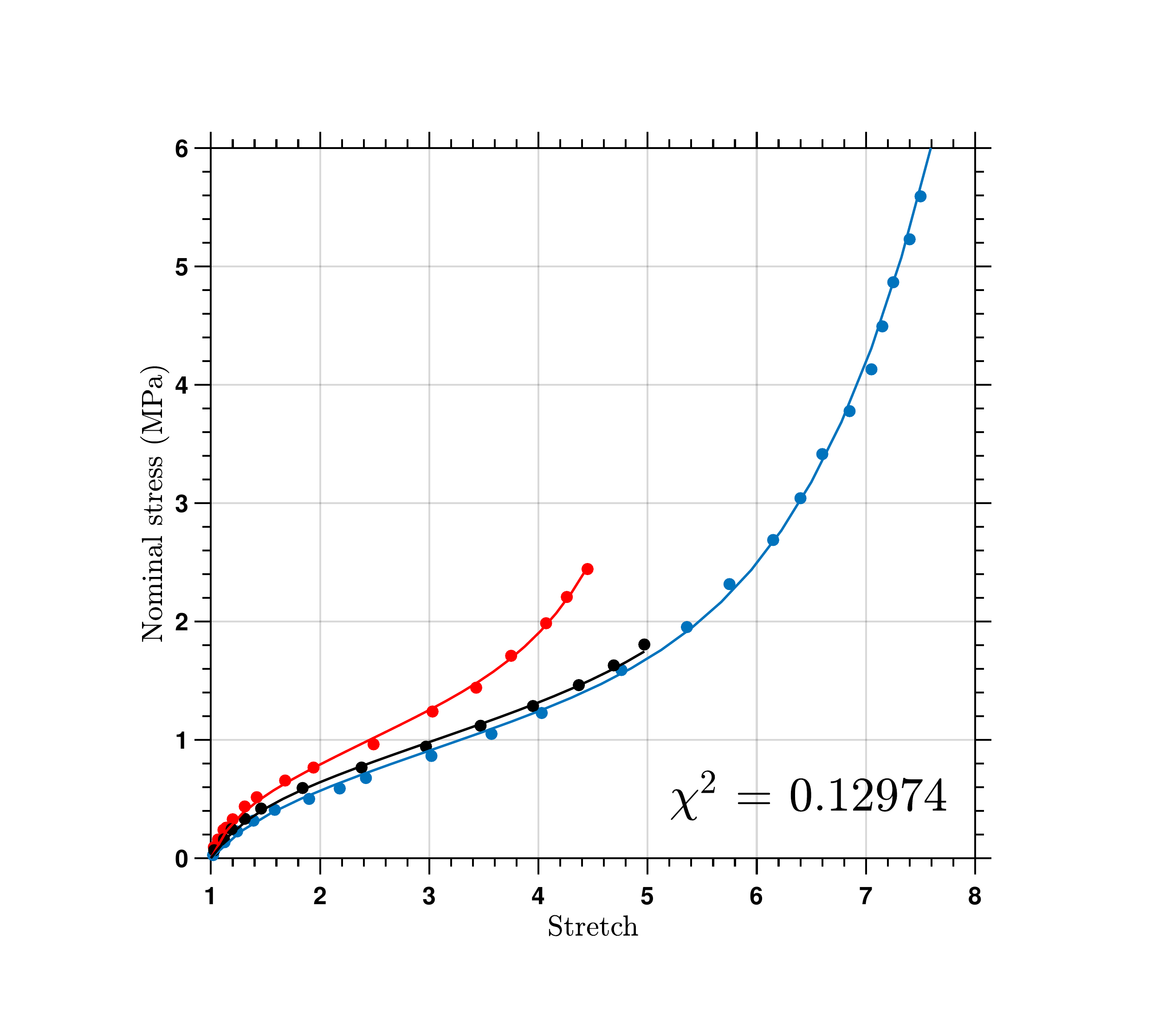}
\end{tabular}
\end{center}
\caption{Fitting of the Ogden model (left) and the model of Hill's class using two Curnier-Rakotomanana strains (right). The dots and lines represent the experimental data and model predictions, respectively. The blue, red, and black colors represent the uniaxial tensile, equi-biaxial tensile, and pure shear responses, respectively. The quality of fit metric $\chi^2$ is adopted from the work of Dal \cite[Eqn.~(179)]{Dal2021}. A smaller value of $\chi^2$ indicates a better model fit.}
\label{fig:curve_fitting}
\end{figure}

\begin{remark}
The efficacy of the proposed model is assessed through the simultaneous fitting of material parameters from a combined suite of uniaxial tensile, equi-biaxial tensile, and pure shear tests of vulcanized rubber, and the experimental data are adopted from the seminal work of Treloar \cite{Treloar1944}. The model we considered is based on the strain energy of the form \eqref{eq:psi-def-fused-type-1} with $N=2$, and the two strains are both taken from the Curnier-Rakotomanana family. The three-term Ogden model is considered for comparison purposes. Both models involve six parameters. The fitting results are illustrated in Figure \ref{fig:curve_fitting} and demonstrate that the newly proposed model exhibits superior quality of fit to the Ogden model. This indicates that the generalization of Hill's hyperelasticity using multiple terms indeed possesses the potential to improve the model capability in describing elastic behavior under large deformations.
\end{remark}

\subsection{Kinematic assumptions of the Green-Naghdi type}
\label{sec:Kinematic assumption}
In the original work of Green and Naghdi \cite{Green1965,Green1971,Naghdi1990}, a fundamental assumption is the existence of a symmetric Lagrangian tensor known as the plastic strain, and the difference between the total strain and the plastic strain enters into the stress response function. This strategy was later generalized with the invocation of Seth-Hill strains. Inspired by this approach and new strain families developed recently, we construct a nonlinear viscoelasticity theory based on similar assumptions. Our \textit{first} assumption is the existence of a rank-two Lagrangian tensor $\bm{\Gamma}$ $\in \mathrm{Sym}(3)_+$. It is a primitive variable characterizing the viscous deformation and is analogous to the deformation tensor $\bm{C}$ used for describing elastic deformation. As a symmetric tensor, the spectral decomposition of $\bm \Gamma$ can be performed, which leads to
\begin{align}
\label{eq:Gamma}
\bm{\Gamma} := \sum_{a=1}^{3} \Gamma^2_a \bar{\bm{M}}_a, \qquad \bar{\bm{M}}_a := \bar{\bm{N}}_a \otimes \bar{\bm{N}}_a,
\end{align}
where $\lbrace\Gamma^2_a\rbrace$ and $\lbrace \bar{\bm N}_a\rbrace$, for ${a=1,2,3}$, are the eigenvalues and eigenvectors of $\bm{\Gamma}$, respectively. We mention that $\bm \Gamma$ is not necessarily coaxial with $\bm C$, and $\lbrace \bar{\bm N}_a\rbrace$ is used to distinguish it from $\bm N_a$. With the decomposition \eqref{eq:Gamma}, we may introduce the corresponding ``viscous strain" $\bm E^{\mathrm{v}}\in \mathrm{Sym}(3)_+$ as
\begin{align}
\label{eq:Ev-def}
\bm E^{\mathrm{v}} := \sum_{a=1}^{3} E^{\mathrm v}(\Gamma_a) \bar{\bm{M}}_a.
\end{align}
We mention that $E^{\mathrm v}$ in \eqref{eq:Ev-def} is a generic scale function and does not necessarily coincide with the scale function used in the total strain $\bm E$ of the body. The \textit{second} kinematic assumption of this work is that $\bm E - \bm E^{\mathrm{v}}$ enters into the constitutive formulation. We further introduce the following tensors
\begin{align}
\label{eq:QQ_LL_v}
\mathbb{Q}^{\mathrm{v}}:= 2\frac{\partial \bm{E}^{\mathrm{v}}}{\partial \bm{\Gamma}} \quad \mathrm{and} \quad  \mathbb{L}^{\mathrm{v}} := 4 \frac{\partial^2 \bm{E}^{\mathrm{v}}}{\partial \bm{\Gamma} \partial \bm{\Gamma}} = 2 \frac{\partial \mathbb{Q}^{\mathrm{v}}}{\partial \bm{\Gamma}}, 
\end{align}
which are the viscous analogues of $\mathbb Q$ and $\mathbb L$. The explicit representations of $\mathbb Q^{\mathrm v}$  and $\mathbb L^{\mathrm{v}}$ follow the formula \eqref{eq:Q} and \eqref{eq:L} with the coefficients  evaluated at $\Gamma_a$. We immediately have the following result, whose proof follows that of Lemma \ref{lemma:Q_inv}. 
\begin{lemma}
\label{lemma:Q_v_inv}
There exists a rank-four tensor $\mathbb Q^{\mathrm v \: -1}$ such that $\mathbb Q^{\mathrm v} : \mathbb Q^{\mathrm v \: -1} = \mathbb Q^{\mathrm v \: -1} : \mathbb Q^{\mathrm v} = \mathbb I$.
\end{lemma}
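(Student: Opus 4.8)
The plan is to follow the proof of Lemma \ref{lemma:Q_inv} line for line, substituting the viscous quantities $\bm \Gamma$, $E^{\mathrm v}$, $\Gamma_a$, and $\bar{\bm M}_a$ for their elastic counterparts $\bm C$, $E$, $\lambda_a$, and $\bm M_a$. First I would record that $E^{\mathrm v}$ is, by assumption, a bona fide scale function in Hill's sense: twice differentiable with $(E^{\mathrm v})' > 0$ on $(0,\infty)$. Since $\bm \Gamma \in \mathrm{Sym}(3)_+$, each $\Gamma_a$ lies in the domain of $E^{\mathrm v}$, so strict monotonicity together with the inverse function theorem guarantees a differentiable inverse $(E^{\mathrm v})^{-1}$. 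This makes $\partial \bm \Gamma / \partial \bm E^{\mathrm v}$ well-defined and supplies the conceptual reason the sought tensor exists. Writing $\mathbb Q^{\mathrm v}$ in the explicit form dictated by \eqref{eq:Q}, with coefficients evaluated at $\Gamma_a$ and self-dyads $\bar{\bm M}_a$,
\[
\mathbb Q^{\mathrm v} = \sum_{a=1}^3 d_a^{\mathrm v}\, \bar{\bm M}_a \otimes \bar{\bm M}_a + \sum_{a=1}^3 \sum_{b \neq a}^3 \vartheta_{ab}^{\mathrm v}\, \bar{\bm M}_a \odot \bar{\bm M}_b,
\]
where $d_a^{\mathrm v} = (E^{\mathrm v})'(\Gamma_a)/\Gamma_a$ and $\vartheta_{ab}^{\mathrm v} = 2(E^{\mathrm v}(\Gamma_a) - E^{\mathrm v}(\Gamma_b))/(\Gamma_a^2 - \Gamma_b^2)$, exposes the block structure to be inverted.

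I would then propose the explicit candidate obtained by reciprocating each coefficient,
\[
\mathbb Q^{\mathrm v \: -1} = \sum_{a=1}^3 \frac{\Gamma_a}{(E^{\mathrm v})'(\Gamma_a)}\, \bar{\bm M}_a \otimes \bar{\bm M}_a + \sum_{a=1}^3 \sum_{b \neq a}^3 \frac{\Gamma_a^2 - \Gamma_b^2}{2\big(E^{\mathrm v}(\Gamma_a) - E^{\mathrm v}(\Gamma_b)\big)}\, \bar{\bm M}_a \odot \bar{\bm M}_b.
\]
Because $\{\bar{\bm N}_a\}$ is an orthonormal eigenbasis of $\bm \Gamma$, the dyads $\{\bar{\bm M}_a\}$ satisfy exactly the orthogonality identities for the products $(\bar{\bm M}_a \otimes \bar{\bm M}_a):(\bar{\bm M}_b \otimes \bar{\bm M}_b)$, $(\bar{\bm M}_a \odot \bar{\bm M}_b):(\bar{\bm M}_c \odot \bar{\bm M}_d)$, and $(\bar{\bm M}_a \otimes \bar{\bm M}_a):(\bar{\bm M}_b \odot \bar{\bm M}_c)$ established in the proof of Lemma \ref{lemma:Q_inv}. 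Contracting the two tensors then collapses the diagonal part to $\sum_a \bar{\bm M}_a \otimes \bar{\bm M}_a$ and the off-diagonal part to $\sum_a \sum_{b \neq a} \bar{\bm M}_a \odot \bar{\bm M}_b$, whose sum is $\mathbb I$; the reverse contraction is identical by the symmetry of the construction.

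The only step requiring real care is the degenerate case in which two or three eigenvalues of $\bm \Gamma$ coincide, where $\vartheta_{ab}^{\mathrm v}$ and its reciprocal become indeterminate. I would dispose of this precisely as in Lemma \ref{lemma:Q_inv}, replacing $\vartheta_{ab}^{\mathrm v}$ by its L'H\^{o}pital limit $d_a^{\mathrm v}$ and the corresponding reciprocal coefficient in $\mathbb Q^{\mathrm v \: -1}$ by $1/d_a^{\mathrm v}$. Strict positivity $(E^{\mathrm v})'(\Gamma_a) > 0$ keeps every reciprocal finite, so the degeneracy is only apparent and the entire argument transfers without modification.
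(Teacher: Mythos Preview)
Your proposal is correct and mirrors exactly what the paper does: it simply states that the proof follows that of Lemma~\ref{lemma:Q_inv}, with the viscous quantities $\bm \Gamma$, $E^{\mathrm v}$, $\Gamma_a$, $\bar{\bm M}_a$ playing the roles of $\bm C$, $E$, $\lambda_a$, $\bm M_a$. Your write-up is in fact more explicit than the paper's one-line reference, but the approach is identical.
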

The internal state variable $\bm \Gamma$ is introduced to describe the non-eqilibrium behavior of the material. Conceptually, it can be viewed as a viscous deformation-like tensor associated with the dashpot when using a spring-dashpot rheological model for interpretation (see Figure \ref{fig:standard_model}). For typical viscoelastic solids, their non-equilibrium behavior may involve multiple relaxation processes, meaning there can be multiple Maxwell elements arranged in parallel in the rheological model. It thus requires multiple independent internal state variables to adequately model the material behavior. In our discussion, we construct the constitutive theory based on a single relaxation process using $\bm \Gamma$, and the extension to $M$ relaxation processes can be made with a set of internal state variables $\{\bm \Gamma^{\alpha}\}_{\alpha=1}^{M}$. Without considering thermal effects, the \textit{equilibrium state} can be characterized kinematically by imposing that the rates of change of the internal state variables are identically zero. Here, we provide a rigorous definition for this notion.
\begin{definition}
For a viscoelastic material characterized by $M$ internal state variables $\{\bm \Gamma^{\alpha}\}_{\alpha=1}^{M}$, it is in the equilibrium state if
\begin{align}
\label{eq:def-equilibrium-state}
\frac{d}{dt}\bm \Gamma^{\alpha} = \bm O,
\end{align}
for $\alpha=1,\cdots, M$.
\end{definition}
\noindent We adopt the notation $|_{\mathrm{eq}}$ introduced in Part \rom{1} to indicate quantities evaluated at the equilibrium state. In an alternative way to characterize the equilibrium state, the force driving the viscous deformation is demanded to vanish in the equilibrium limit \cite[p.~345]{Simo2006}. We mention that the two conditions are not necessarily equivalent in the \textit{finite strain} setting, as we did encounter non-vanishing non-equilibrium stresses in certain models that cause pathological behavior previously (see Sec. 4.1 in Part \rom{1}). Nevertheless, we will show in the subsequent discussion that, with the kinematic assumption of the Green-Naghdi type, the non-equilibrium stress can be rigorously proved to be fully relaxed in the equilibrium state.

\begin{remark}
It is well-known that the difference $\bm E - \bm E^{\mathrm{v}}$ is not necessarily an elastic strain. Here, we use $\bm E^{\mathrm e}$ simply as a notation for this difference without providing additional interpretations. The normality condition \eqref{eq:E_property}$_2$ of the generalized strain guarantees that it can be approximated by an infinitesimal strain in the small strain limit. The second kinematic assumption, together with the generalized strain concept, naturally recovers the decomposition used in the infinitesimal strain inelasticity theory, that is, $\bm{\varepsilon} = \bm{\varepsilon}^{\mathrm{v}} + \bm{\varepsilon}^{\mathrm{e}}$ \cite{Simo2006}, in which $\bm{\varepsilon}$, $\bm{\varepsilon}^{\mathrm{e}}$ and $\bm{\varepsilon}^{\mathrm{v}}$ are the infinitesimal approximations of $\bm E$, $\bm E^{\mathrm{v}}$, and $\bm E^{\mathrm e}$, respectively. 
\end{remark}

\begin{remark}
When it becomes necessary, we use $\bm E^{\mathrm v}(\bm \Gamma^{\alpha})$ to clarify the dependency of a generalized strain on the deformation-like tensor (the $\alpha$-th internal state variable $\bm \Gamma^{\alpha}$ here). In practice, we leverage a robust and accurate algorithm  \cite{Scherzinger2008,Guan2023,PERIGEE_system} to perform spectral decompositions for $\bm C$ and $\{ \bm \Gamma^{\alpha} \}^{M}_{\alpha=1}$, which are then utilized to construct the generalized strains based on the corresponding scale functions.
\end{remark}

\begin{remark}
In a different approach, the kinematic assumption postulates the multiplicative decomposition $\bm{F} = \bm{F}^{\mathrm{e}} \bm{F}^{\mathrm{v}}$ based on the concept of an intermediate configuration \cite{Lee1969,Sidoroff1974,Reese1998}. One may establish the equivalence between the proposed theory and the multiplicative theory using specific strain or under specific deformation states. For example, if the internal state variable $\bm \Gamma$ is interpreted as $\bm C^{\mathrm{v}}:= \bm{F}^{\mathrm{v} \: T} \bm{F}^{\mathrm{v}}$, there is an equivalence of the elastic strain invariants using the Green-Lagrange strain. Further elaborations on these insights can be found in \ref{Connections between the two kinematic assumptions}, shedding lights on the connections between the two fundamentally distinct kinematic assumptions.
\end{remark}

\subsection{Constitutive theory based on the Helmholtz free energy}
In this section, we derive constitutive relations based on the Helmholtz free energy to provide a theory that works within the classical pure displacement formulation. Our discussion starts by considering a single relaxation process and using conventional Hill's hyperelasticity. Under the isothermal condition, the Helmholtz free energy $\Psi$ is postulated as a function of $\bm C$ and $\bm \Gamma$, that is, $\Psi = \Psi( \bm{C}, \bm{\Gamma})$. Due to the definitions of generalized strains, the potential $\Psi$ can be equivalently expressed as a function of  $\bm E = \bm E(\bm C)$ and $\bm E^{\mathrm{v}}= \bm E^{\mathrm{v}}(\bm \Gamma)$. A number of experiments suggest the existence of distinct hyperelastic and time-dependent contributions of the energy \cite{Bergstroem1998,Wang2018}, leading to the following form of the free energy 
\begin{align}
\label{eq:Helmholtz_eq_neq}
\Psi(\bm C, \bm \Gamma) = \Psi^{\infty}(\bm E) + \Upsilon(\bm E - \bm E^{\mathrm v}).
\end{align}
The superscript `$\infty$' represents the contribution arising from the material's equilibrium response. The potential $\Upsilon$ is associated with the dissipative behavior and is known as the configurational free energy \cite{Holzapfel2000,Liu2021b}. Sometimes, it is also referred to as the non-equilibrium part of the free energy \cite{Reese1998}, and we use the superscript `neq' to indicate stresses derived from $\Upsilon$. We mention that hyperelasticity is modeled following Hill's framework, and the energies $\Psi^{\infty}$ and $\Upsilon$ are, respectively, quadratic functions of $\bm E$ and $\bm E - \bm E^{\mathrm v}$. They may take the functional form of either \eqref{eq:hill-conv-hyperelasticity} or \eqref{eq:hill-hyperelasticity-type-2}, and we do not specify their explicit forms at this stage. After function composition, we may express the two potentials as $\Psi^{\infty}(\bm C)$ and $\Upsilon(\bm C, \bm \Gamma)$ without distinguishing the functional form. The significance of \eqref{eq:Helmholtz_eq_neq} is that the two deformation tensors $\bm C$ and $\bm \Gamma$ enter into the potential $\Upsilon$ through $\bm E(\bm C) - \bm E^{\mathrm v}(\bm \Gamma)$, exemplifying the second kinematic assumption of the Green-Naghdi type. The Clausius-Planck inequality reads
\begin{align}
\label{eq:Clausius_Plank_inequality}
\mathcal D := \bm S : \frac12 \frac{d\bm C}{dt} - \frac{d\Psi}{dt} \geq 0,
\end{align}
where $\mathcal D$ represents the internal dissipation. With the postulated potential form \eqref{eq:Helmholtz_eq_neq}, the inequality can be expanded as
\begin{align*}
\mathcal{D} = \left( \bm S - 2\frac{\partial \Psi}{\partial \bm C} \right) : \frac12 \frac{d\bm C}{dt} +\bm Q  : \frac12 \frac{d\bm \Gamma}{dt} \geq 0,
\end{align*}
with
\begin{align}
\label{eq:def_Q}
\bm Q := - 2\frac{\partial \Psi}{\partial \bm \Gamma}.
\end{align}
To ensure the satisfaction of the inequality for arbitrary kinematic processes, we make the following choices,
\begin{align}
\label{eq:constitutive_S_Q}
\bm S := 2 \frac{\partial \Psi}{\partial \bm C}, \quad \bm Q = \mathbb V : \left( \frac12 \frac{d\bm \Gamma}{dt} \right),
\end{align}
with $\mathbb V$ being a positive-definite rank-four viscosity tensor. It is additionally assumed that there exists $\mathbb V^{-1}$ such that the constitutive relation \eqref{eq:constitutive_S_Q}$_2$ can be equivalently expressed as
\begin{align}
\label{eq:evolution_equation}
\frac12 \frac{d\bm \Gamma}{dt} = \mathbb V^{-1} : \bm Q.
\end{align}
With the above constitutive relations, the internal dissipation becomes
\begin{align}
\label{eq:helmholtz-dissipation}
\mathcal D = \left( \frac12 \frac{d\bm \Gamma}{dt} \right) : \mathbb V : \left( \frac12 \frac{d\bm \Gamma}{dt} \right) = \bm Q : \mathbb V^{-1} : \bm Q,
\end{align}
which remains non-negative due to the positive-definiteness of the viscosity tensor $\mathbb V$. An illustration of this nonlinear viscoelasticity model is made through the spring-dashpot device in Figure \ref{fig:standard_model}.

\begin{figure}
\begin{center}
\includegraphics[angle=0, trim=30 130 200 0, clip=true, scale = 0.35]{./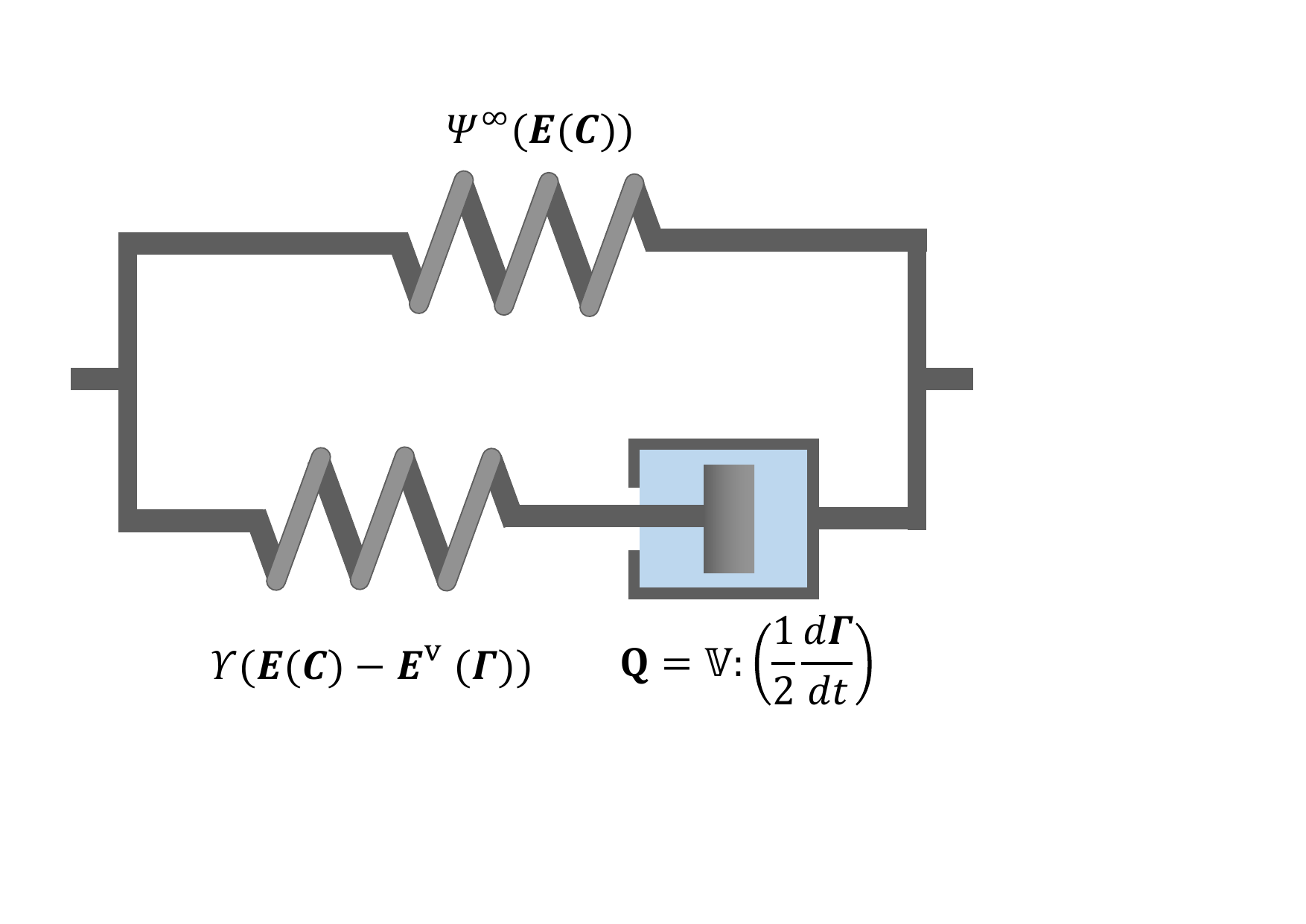}
\end{center}
\caption{An illustration of the theory based on the Helmholtz free energy using the spring-dashpot rheological model.}
\label{fig:standard_model}
\end{figure}

Leveraging the split of the Helmholtz free energy \eqref{eq:Helmholtz_eq_neq}, the second Piola-Kirchhoff stress $\bm{S}$ can be decomposed into equilibrium and non-equilibrium parts,
\begin{align}
\label{eq:S_eq_neq}
\bm S = \bm S^{\infty} + \bm S^{\mathrm{neq}},
\end{align}
with
\begin{align}
\label{eq:def_S_T_eq_neq}
\bm S^{\infty} := \bm T^{\infty} : \mathbb Q, \quad
\bm S^{\mathrm{neq}} := \bm T^{\mathrm{neq}} : \mathbb Q, \quad \bm{T}^{\infty}:= \frac{\partial \Psi^{\infty}}{\partial \bm{E}}, \quad \mbox{and} \quad \bm{T}^{\mathrm{neq}}:= \frac{\partial \Upsilon}{\partial \bm{E}} = -\frac{\partial \Upsilon}{\partial \bm{E}^{\mathrm{v}}}.
\end{align}
In the above, $\bm T^{\infty}$ and $\bm T^{\mathrm{neq}}$ are the work conjugate stresses to the generalized strain $\bm E$ with respect to the equilibrium and non-equilibrium potentials. We also recognize that the definition \eqref{eq:def_Q} of $\bm Q$ can be refined as
\begin{align}
\label{definition_Q}
\bm{Q} = -2\frac{\partial \Upsilon}{\partial \bm \Gamma} = -\frac{\partial \Upsilon}{\partial \bm E^{\mathrm{v}}} : 2\frac{\partial \bm E^{\mathrm{v}}}{\partial \bm \Gamma} = \bm{T}^{\mathrm{neq}}:\mathbb{Q}^{\mathrm{v}}.
\end{align}

With the constitutive relation \eqref{eq:constitutive_S_Q}$_2$, it can be readily shown that  $\bm Q |_{\mathrm{eq}} = \bm O$. A well-posed model needs to ensure $\bm{S}^{\mathrm{neq}} |_{\mathrm{eq}} = \bm O$ in the equilibrium state (see also Remark 1 of \cite{Reese1998}). An intuitive interpretation is that the force driving the dashpot vanishes in the equilibrium state (refer to Figure \ref{fig:standard_model}). In the above derived theory, $\bm{S}^{\mathrm{neq}} \neq \bm{Q}$ simply because the two projection tensors $\mathbb{Q}$ and $\mathbb{Q}^{\mathrm{v}}$ are not necessarily identical. Nevertheless, we still can show the vanishment of $\bm{S}^{\mathrm{neq}}$ in the following proposition.

\begin{proposition}
\label{eq:proposition_thermodynamic_limit_Hel}
In the equilibrium state, the non-equilibrium stress vanishes, i.e., 
\begin{align*}
\bm{S}^{\mathrm{neq}}\Big|_{\mathrm{eq}} = \bm{O}.
\end{align*} 
\end{proposition}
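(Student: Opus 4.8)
The plan is to exploit the observation, already highlighted in the paragraph preceding the proposition, that the non-equilibrium stress $\bm S^{\mathrm{neq}}$ and the thermodynamic driving force $\bm Q$ are built from a common factor, namely the work-conjugate stress $\bm T^{\mathrm{neq}}$. Indeed, from \eqref{eq:def_S_T_eq_neq} and \eqref{definition_Q} one has $\bm S^{\mathrm{neq}} = \bm T^{\mathrm{neq}} : \mathbb Q$ and $\bm Q = \bm T^{\mathrm{neq}} : \mathbb Q^{\mathrm{v}}$. Consequently, it suffices to establish $\bm T^{\mathrm{neq}}|_{\mathrm{eq}} = \bm O$; the conclusion $\bm S^{\mathrm{neq}}|_{\mathrm{eq}} = \bm O$ then follows at once by contracting with $\mathbb Q$.

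I would carry this out in three short steps. First, I would invoke the kinematic characterization of equilibrium: the condition \eqref{eq:def-equilibrium-state} (here with a single internal variable) reads $d\bm\Gamma/dt = \bm O$, and substituting this into the constitutive relation \eqref{eq:constitutive_S_Q}$_2$, $\bm Q = \mathbb V : (\tfrac12\, d\bm\Gamma/dt)$, gives $\bm Q|_{\mathrm{eq}} = \bm O$. Second, I would use the refined expression $\bm Q = \bm T^{\mathrm{neq}} : \mathbb Q^{\mathrm{v}}$ from \eqref{definition_Q} and contract it on the right with $\mathbb Q^{\mathrm v \: -1}$, whose existence is furnished by Lemma \ref{lemma:Q_v_inv}; this yields $\bm T^{\mathrm{neq}}|_{\mathrm{eq}} : \mathbb Q^{\mathrm{v}} : \mathbb Q^{\mathrm v \: -1} = \bm T^{\mathrm{neq}}|_{\mathrm{eq}} : \mathbb I = \bm T^{\mathrm{neq}}|_{\mathrm{eq}} = \bm O$. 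Third, inserting $\bm T^{\mathrm{neq}}|_{\mathrm{eq}} = \bm O$ into $\bm S^{\mathrm{neq}} = \bm T^{\mathrm{neq}} : \mathbb Q$ finishes the proof.

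The point where genuine content enters — and the reason the statement is not immediate — is that $\bm S^{\mathrm{neq}} \neq \bm Q$ in general, since the projection tensors $\mathbb Q$ and $\mathbb Q^{\mathrm{v}}$ are assembled from different scale functions and different spectral frames ($\bm N_a$ versus $\bar{\bm N}_a$) and need not coincide. One therefore cannot simply transfer the vanishing of $\bm Q$ to $\bm S^{\mathrm{neq}}$. The crux is the invertibility of $\mathbb Q^{\mathrm{v}}$ (Lemma \ref{lemma:Q_v_inv}), which is precisely what permits one to pass from the vanishing of the \emph{contracted} quantity $\bm Q$ back to the vanishing of the underlying stress $\bm T^{\mathrm{neq}}$. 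I would also stress that the Green-Naghdi kinematic assumption plays the enabling role here: because $\Upsilon$ depends on $\bm E$ and $\bm E^{\mathrm v}$ only through $\bm E - \bm E^{\mathrm v}$, the chain rule forces $\partial\Upsilon/\partial\bm E = -\partial\Upsilon/\partial\bm E^{\mathrm v} = \bm T^{\mathrm{neq}}$, which is exactly what guarantees that the \emph{same} $\bm T^{\mathrm{neq}}$ appears as the common factor in both $\bm Q$ and $\bm S^{\mathrm{neq}}$.
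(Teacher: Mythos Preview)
Your proof is correct and follows essentially the same route as the paper: establish $\bm Q|_{\mathrm{eq}}=\bm O$ from the equilibrium condition and the constitutive relation, invoke the invertibility of $\mathbb Q^{\mathrm v}$ (Lemma~\ref{lemma:Q_v_inv}) to deduce $\bm T^{\mathrm{neq}}|_{\mathrm{eq}}=\bm O$, and then contract with $\mathbb Q$ to conclude. Your additional remarks on why the result is non-trivial and on the role of the Green--Naghdi assumption are accurate elaborations but do not alter the underlying argument.
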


\begin{proof}
According to Lemma \ref{lemma:Q_inv}, there exists a rank-four tensor $\mathbb{Q}^{\mathrm{v} \: -1}$ such that $\mathbb{Q}^{\mathrm{v}} : \mathbb{Q}^{\mathrm{v} \: -1} = \mathbb{I} $. Therefore, one has the following relation
\begin{align*}
\bm{Q} : \mathbb{Q}^{\mathrm{v} \: -1} = \bm{T}^{\mathrm{neq}} : \mathbb{Q}^{\mathrm{v}}: \mathbb{Q}^{\mathrm{v} \: -1} = \bm{T}^{\mathrm{neq}}.
\end{align*}
It can be shown that $\bm Q|_{\mathrm{eq}} = \bm O$ due to \eqref{eq:constitutive_S_Q}$_2$ and the definition of the equilibrium state given in \eqref{eq:def-equilibrium-state}. Consequently one has $\bm{T}^{\mathrm{neq}}|_{\mathrm{eq}} = \bm O$, which further leads to $\bm{S}^{\mathrm{neq}}|_{\mathrm{eq}} = \bm{O}$.
\end{proof}

\begin{remark}
If the viscosity tensor $\mathbb{V}$ is isotropic, it can be represented in the following form,
\begin{align*}
\mathbb{V}= 2\eta_{\mathrm{d}} \left( \mathbb{I} - \frac{1}{3} \bm{I} \otimes \bm{I} \right) + \frac{2}{3} \eta_{\mathrm{v}} \bm{I} \otimes \bm{I},
\end{align*}
with $\eta_{\mathrm{d}}$ and $\eta_{\mathrm{v}}$ being the deviatoric and volumetric viscosities, respectively. If the two coefficients are identical (i.e., $\eta = \eta_{\mathrm{d}} = \eta_{\mathrm{v}}$), the constitutive relation \eqref{eq:constitutive_S_Q}$_2$ and the internal dissipation get further simplified as
\begin{align*}
\frac{d\bm \Gamma}{dt} = \frac{1}{\eta}\bm Q \quad \mbox{and} \quad \mathcal D = \frac{\eta}{2} \left\lvert \frac{d \bm \Gamma }{dt} \right\rvert^2 = \frac{1}{2\eta} \left\lvert \bm Q \right\rvert^2. 
\end{align*}
\end{remark}

\subsubsection{Generalization: Hill's hyperelasticity of multiple terms and multiple relaxation processes}
\label{sec:helmholtz-gen-hyperelasticity}
As was discussed in Section \ref{sec:Hyperelasticity}, the capability of conventional Hill's hyperelasticity in characterizing nonlinear material behavior can be improved by adopting multiple quadratic terms. Moreover, assume that there are $M$ independent internal state variables $\{ \bm{\Gamma}^{\alpha} \}_{\alpha=1}^{M}$. Each one of them is associated with a viscosity tensor $\mathbb{V}^{\alpha}$.  Based on this generalization strategy, the two potentials $\Psi^{\infty}$ and $\Upsilon$ can be conveniently modified into the sums of multiple quadratic terms,
\begin{align}
\label{eq:Hill_Psi_Upsilon}
\Psi^{\infty}(\bm C) = \sum_{\beta = 1}^{N}  \Psi^{\infty}_{\beta}(\bm C), \quad \Upsilon(\bm C, \bm \Gamma^1, \cdots, \bm \Gamma^{M}) = \sum_{\alpha = 1}^{M} \Upsilon^{\alpha}(\bm C, \bm \Gamma^{\alpha}),
\end{align}
with $\Psi^{\infty}_{\beta}$ and $\Upsilon^{\alpha}$ being the quadratic function of $\bm{E}^{\infty}_{\beta}(\bm{C})$ and $\bm{E}^{\alpha} (\bm{C}) - \bm{E}^{\mathrm{v} \: \alpha}(\bm{\Gamma}^{\alpha})$, respectively.  The stresses are correspondingly expressed as
\begin{align}
\label{eq:gen_S_inf_S_neq_Q_alpha}
\bm{S}^{\infty} = \sum^{N}_{\beta=1} \bm{T}^{\infty}_{\beta} : \mathbb{Q}^{\infty}_{\beta}, \quad \bm{S}^{\mathrm{neq}} = \sum^{M}_{\alpha=1} \bm{T}^{\alpha} : \mathbb{Q}^{\alpha}, \quad \bm{Q}^{\alpha} := -2 \frac{\partial \Upsilon}{\partial \bm \Gamma^{\alpha}} = -2 \frac{\partial \Upsilon^{\alpha}}{\partial \bm \Gamma^{\alpha}} = \bm{T}^{\alpha} : \mathbb{Q}^{\mathrm{v} \: \alpha},
\end{align}
with 
\begin{gather}
\label{eq:T_eq_neq_beta}
\bm{T}^{\infty}_{\beta} := \frac{\partial \Psi^{\infty}_{\beta} }{\partial \bm{E}^{\infty}_{\beta}}, \quad \bm{T}^{\alpha} := \frac{\partial \Upsilon^{\alpha} }{\partial \bm{E}^{\alpha}}  = - \frac{\partial \Upsilon^{\alpha} }{\partial \bm{E}^{\mathrm{v} \: \alpha}}, \displaybreak[2] \\
\label{eq:Q_inf_beta_alpha}
\mathbb{Q}^{\infty}_{\beta}:= 2\frac{\partial \bm{E}^{\infty}_{\beta}}{\partial \bm C}, \quad \mathbb{Q}^{\alpha}:= 2\frac{\partial \bm{E}^{\alpha}}{\partial \bm C}, \quad \mbox{and} \quad \mathbb{Q}^{\mathrm{v} \: \alpha} := 2\frac{\partial \bm{E}^{\mathrm{v} \: \alpha}}{\partial \bm{\Gamma}^{\alpha}}.
\end{gather}
For each one of the relaxation processes, the evolution equation reads
\begin{align}
\label{eq:Q_alpha_V_alpha_evo_eqn}
\bm Q^{\alpha} = \mathbb V^{\alpha} : \left( \frac12 \frac{d\bm \Gamma^{\alpha}}{dt} \right) \quad \mbox{or} \quad \frac12 \frac{d\bm \Gamma^{\alpha}}{dt} = (\mathbb V^{\alpha})^{-1} : \bm Q^{\alpha}.
\end{align}
The internal dissipation becomes
\begin{align*}
\mathcal D = \sum_{\alpha=1}^{M}\left( \frac12 \frac{d\bm \Gamma^{\alpha}}{dt} \right) : \mathbb V^{\alpha} : \left( \frac12 \frac{d\bm \Gamma^{\alpha}}{dt} \right)  = \sum_{\alpha = 1}^{M} \bm Q^{\alpha} : (\mathbb V^{\alpha})^{-1} : \bm Q^{\alpha},
\end{align*}
involving the dissipation from the $M$ different processes. For the generalized case, we show that the property given by Proposition \ref{eq:proposition_thermodynamic_limit_Hel} still holds.
\begin{proposition}
\label{eq:proposition_thermodynamic_limit_Hel_gen}
For the generalized model with the potential given by \eqref{eq:Hill_Psi_Upsilon}, the non-equilibrium stress vanishes in the equilibrium state, i.e., 
\begin{align*}
\bm{S}^{\mathrm{neq}}\Big|_{\mathrm{eq}} = \bm{O}.
\end{align*} 
\end{proposition}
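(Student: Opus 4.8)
The plan is to mirror the argument used for Proposition \ref{eq:proposition_thermodynamic_limit_Hel}, but now carry it out independently for each of the $M$ relaxation processes and then sum. The key structural observation is that the non-equilibrium energy $\Upsilon$ in \eqref{eq:Hill_Psi_Upsilon} decouples additively in the internal state variables, so that $\bm Q^{\alpha} = -2\,\partial\Upsilon/\partial\bm\Gamma^{\alpha} = -2\,\partial\Upsilon^{\alpha}/\partial\bm\Gamma^{\alpha}$ depends only on its own internal variable through \eqref{eq:gen_S_inf_S_neq_Q_alpha}, and likewise the conjugate stress $\bm T^{\alpha}$ is tied to $\bm Q^{\alpha}$ solely through the viscous projection $\mathbb Q^{\mathrm v \: \alpha}$.

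First I would invoke the definition of the equilibrium state \eqref{eq:def-equilibrium-state}, which imposes $d\bm\Gamma^{\alpha}/dt = \bm O$ for every $\alpha = 1,\dots,M$ simultaneously. Feeding this into the evolution equation \eqref{eq:Q_alpha_V_alpha_evo_eqn}$_1$ gives $\bm Q^{\alpha}\big|_{\mathrm{eq}} = \mathbb V^{\alpha} : \bm O = \bm O$ for each $\alpha$; note that no invertibility of $\mathbb V^{\alpha}$ is needed in this direction.

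Next I would promote the vanishing of each $\bm Q^{\alpha}$ to the vanishing of the corresponding $\bm T^{\alpha}$. Here the essential ingredient is Lemma \ref{lemma:Q_v_inv}, applied once to each internal variable: since every viscous scale function $E^{\mathrm v \: \alpha}$ is smooth and strictly monotone, the corresponding projection $\mathbb Q^{\mathrm v \: \alpha}$ admits an inverse $\mathbb Q^{\mathrm v \: \alpha \: -1}$ with $\mathbb Q^{\mathrm v \: \alpha} : \mathbb Q^{\mathrm v \: \alpha \: -1} = \mathbb I$. Contracting the relation $\bm Q^{\alpha} = \bm T^{\alpha} : \mathbb Q^{\mathrm v \: \alpha}$ from \eqref{eq:gen_S_inf_S_neq_Q_alpha} on the right with $\mathbb Q^{\mathrm v \: \alpha \: -1}$ yields $\bm T^{\alpha} = \bm Q^{\alpha} : \mathbb Q^{\mathrm v \: \alpha \: -1}$, whence $\bm T^{\alpha}\big|_{\mathrm{eq}} = \bm O$. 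Substituting into the representation $\bm S^{\mathrm{neq}} = \sum_{\alpha=1}^{M} \bm T^{\alpha} : \mathbb Q^{\alpha}$ then gives $\bm S^{\mathrm{neq}}\big|_{\mathrm{eq}} = \bm O$ term by term.

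I do not anticipate a genuine obstacle, since the statement is a clean additive generalization of Proposition \ref{eq:proposition_thermodynamic_limit_Hel}. The only point demanding care is logical rather than computational: one must verify that Lemma \ref{lemma:Q_v_inv} applies separately to each of the $M$ processes, i.e.\ that each scale function $E^{\mathrm v \: \alpha}$ individually satisfies the monotonicity and smoothness hypotheses, and that the equilibrium condition is the \emph{simultaneous} vanishing of all the rates $d\bm\Gamma^{\alpha}/dt$ rather than of any single one. Granting these, the additive decoupling of $\Upsilon$ makes the conclusion immediate.
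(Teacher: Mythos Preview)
Your proposal is correct and follows essentially the same route as the paper: use the equilibrium condition and the evolution equation \eqref{eq:Q_alpha_V_alpha_evo_eqn} to obtain $\bm Q^{\alpha}|_{\mathrm{eq}}=\bm O$ for each $\alpha$, invoke the invertibility of $\mathbb Q^{\mathrm v\:\alpha}$ (Lemma~\ref{lemma:Q_v_inv}) to conclude $\bm T^{\alpha}|_{\mathrm{eq}}=\bm O$, and then sum in \eqref{eq:gen_S_inf_S_neq_Q_alpha}$_2$. Your additional remarks on the additive decoupling of $\Upsilon$ and the simultaneous nature of the equilibrium condition are accurate elaborations but not substantively different from the paper's argument.
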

\begin{proof}
Due to the definition of the equilibrium state, we have $\bm Q^{\alpha}|_{\mathrm{eq}} = \bm O$. According to Lemma \ref{lemma:Q_inv}, one has the following relation,
\begin{align*}
\bm{Q}^{\alpha} : \mathbb{Q}^{\mathrm{v} \: \alpha \: -1} = \bm{T}^{\alpha} : \mathbb{Q}^{\mathrm{v} \: \alpha}: \mathbb{Q}^{\mathrm{v} \: \alpha \: -1} = \bm{T}^{\alpha},
\end{align*}
which implies $\bm T^{\alpha}|_{\mathrm{eq}} = \bm O$. Following the expression for the non-equilibrium stress, it can be shown that
\begin{align*}
\bm{S}^{\mathrm{neq}}|_{\mathrm{eq}} = \sum^{M}_{\alpha=1} \bm{T}^{\alpha}|_{\mathrm{eq}} : \mathbb{Q}^{\alpha} = \bm O,
\end{align*}
which completes the proof.
\end{proof}
\begin{remark}
It remains an interesting topic to further generalize the theory by allowing multiple quadratic terms in the definition of $\Upsilon^{\alpha}$. However, we found that the result of Proposition \ref{eq:proposition_thermodynamic_limit_Hel_gen} will not hold with that generalization.
\end{remark}

\subsection{Constitutive theory based on the Gibbs free energy}
\label{sec:Gibbs_constitutive}
There have been experimental evidences suggesting that the shear viscoelastic effects are more significant than the bulk viscoelasticity for most materials \cite[Chapter 18]{Ferry1980}. Following those observations, we consider the decomposition of the Helmholtz free energy into the volumetric and isochoric parts,
\begin{align}
\label{eq:vol_iso-Helmholtz-free-energy}
\Psi(\bm C, \bm \Gamma) = \Psi_{\mathrm{vol}}^{\infty}(J) + \Psi_{\mathrm{iso}}(\tilde{\bm{E}}, \bm{E}^{\mathrm{v}}).
\end{align}
based on the decomposition of the Flory type \eqref{eq:Flory_decomposition}. Following the idea of energy split introduced in \eqref{eq:Helmholtz_eq_neq}, the isochoric part of the energy $\Psi_{\mathrm{iso}}$ gets further decomposed into the equilibrium and non-equilibrium parts,
\begin{align}
\label{eq:decomposition_of_Psi_iso}
\Psi_{\mathrm{iso}}(\tilde{\bm{E}}, \bm{E}^{\mathrm{v}}) = \Psi^{\infty}_{\mathrm{iso}}(\tilde{\bm{E}}) + \Upsilon(\tilde{\bm{E}} - \bm{E}^{\mathrm{v}}).
\end{align}
For the Helmholtz free energy, an issue is the singular behavior as the model approaches the incompressible limit. To circumvent this issue, a Legendre transformation can be performed on the volumetric part of the energy $\Psi^{\infty}_{\mathrm{vol}}$,
\begin{align}
\label{eq:Legendre_transformation}
G^{\infty}_{\mathrm{vol}}(P) := \inf_{J} \left \{ \Psi^{\infty}_{\mathrm{vol}}(J) + PJ \right \},
\end{align}
where $P$ is the thermodynamic pressure defined on the Lagrangian configuration. The resulting Gibbs free energy is defined as
\begin{align}
\label{eq:Gibbs}
G(\tilde{\bm C}, P, \bm \Gamma) :=  G^{\infty}_{\mathrm{vol}}(P) +  G_{\mathrm{iso}}(\tilde{\bm{E}}, \bm{E}^{\mathrm{v}})  = G^{\infty}_{\mathrm{vol}}(P) + G^{\infty}_{\mathrm{iso}}(\tilde{\bm{E}}) + \Upsilon(\tilde{\bm{E}}, \bm{E}^{\mathrm{v}}),
\end{align}
with
\begin{align}
\label{eq:G_iso_inf_Upsilon}
G^{\infty}_{\mathrm{iso}}(\tilde{\bm{E}}) = \Psi^{\infty}_{\mathrm{iso}}(\tilde{\bm{E}}) = \mu^{\infty} \left \lvert \tilde{\bm{E}} \right \rvert^2 \quad \mbox{and} \quad
\Upsilon(\tilde{\bm{E}}- \bm{E}^{\mathrm{v}}) = \mu^{\mathrm{neq}} \left \lvert \tilde{\bm{E}} - \bm{E}^{\mathrm{v}} \right \rvert^2.
\end{align}
In the above, we replaced the notation $\Psi^{\infty}_{\mathrm{iso}}$ by $G^{\infty}_{\mathrm{iso}}$ to indicate that it constitutes a part of the Gibbs free energy. The Legendre transformation does not affect the isochoric part, meaning $G^{\infty}_{\mathrm{iso}}$ and $\Upsilon$ are directly inherited from the Helmholtz free energy. Also, the design of the two potentials here follows the modified quadratic form \eqref{eq:psi-def-fused-type-2}, with $\mu^{\infty}$ ($\mu^{\mathrm{neq}}$) representing the shear modulus for the (non-)equilibrium part. For notational simplicity, we momentarily consider a single quadratic term for the elastic model here. Due to the Legendre transformation \eqref{eq:Legendre_transformation}, the pressure $P$ enters into the theory as a primitive variable, and the subsequent formulation becomes of the saddle-point nature, which is well-behaved in both compressible and incompressible regimes \cite{Liu2018,Liu2021b} (see Fig. 1 in Part \rom{1} for an interpretation). In fact, one may show that the Gibbs free energy \eqref{eq:Gibbs} leads to a generalization of the Herrmann variational formulation to the finite strain regimes \cite{Herrmann1965,Reissner1984,Shariff1997,Liu2019a}. With the Gibbs free energy \eqref{eq:Gibbs}, the Clausius-Plank inequality is modified to
\begin{align*}
\mathcal{D} &= \bm{S} : \frac12 \frac{d\bm{C}}{dt} - \frac{dG}{dt} + \frac{dP}{dt}J + P\frac{dJ}{dt} \nonumber \displaybreak[2] \\
&= \left( \bm S - 2 \frac{\partial G^{\infty}_{\mathrm{iso}}}{\partial \tilde{\bm C}} : \frac{\partial \tilde{\bm C}}{\partial \bm C} - 2 \frac{\partial \Upsilon}{\partial \tilde{\bm C}} : \frac{\partial \tilde{\bm C}}{\partial \bm C} + 2 P \frac{\partial J}{\partial \bm C} \right) : \frac12 \frac{d\bm C}{dt} +  \left( J - \frac{d{G^{\infty}_{\mathrm{vol}}}}{d P} \right) \frac{dP}{dt} - 2 \frac{\partial \Upsilon}{\partial \bm \Gamma} : \frac12 \frac{d \bm \Gamma}{dt} \nonumber \displaybreak[2] \\
&= \left( \bm S - J^{-\frac23} \mathbb P : \left( \tilde{\bm S}^{\infty}_{\mathrm{iso}} + \tilde{\bm S}^{\mathrm{neq}}_{\mathrm{iso}} \right) +  P J \bm C^{-1} \right) : \frac12 \frac{d\bm C}{dt} +  \left( J - \frac{d{G^{\infty}_{\mathrm{vol}}}}{d P} \right) \frac{dP}{dt} + \bm Q : \frac12 \frac{d \bm \Gamma}{dt}.
\end{align*}
In the last equality, we invoked the definition of $\mathbb P$ given in \eqref{eq:deviatoric_projection} and introduced three stress-like tensors as
\begin{align}
\label{eq:tilde_stress}
\tilde{\bm{S}}^{\infty}_{\mathrm{iso}} := 2\frac{\partial G_{\mathrm{iso}}^{\infty}}{\partial \tilde{\bm{C}}},\quad \tilde{\bm{S}}_{\mathrm{iso}}^{\mathrm{neq}} := 2\frac{\partial \Upsilon}{\partial \tilde{\bm{C}}}, \quad \mbox{and} \quad \bm{Q} := - 2\frac{\partial \Upsilon}{\partial \bm{\Gamma}}.
\end{align}
Based on the above relation for the internal dissipation $\mathcal D$, the following choices are made,
\begin{gather}
\label{eq:Gibbs-constitutive-rho-S-Q}
\rho = \rho_0 \left( \frac{d{G^{\infty}_{\mathrm{vol}}}}{d P} \right)^{-1} \circ \bm \varphi_{t}^{-1}, \quad
\bm S = \bm S_{\mathrm{vol}} + \bm S_{\mathrm{iso}} = \bm S_{\mathrm{vol}} + \bm S_{\mathrm{iso}}^{\infty} + \bm S_{\mathrm{iso}}^{\mathrm{neq}}, \quad \bm Q = \mathbb V : \left( \frac12 \frac{d\bm \Gamma}{dt} \right),  \displaybreak[2] \\
\label{eq:Gibbs-constitutive-S}
\bm S_{\mathrm{vol}} = -J P \bm C^{-1}, \quad
\bm S_{\mathrm{iso}}^{\infty} = J^{-\frac{2}{3}} \mathbb{P} : \tilde{\bm{S}}^{\infty}_{\mathrm{iso}}, \quad
\bm S_{\mathrm{iso}}^{\mathrm{neq}} = J^{-\frac{2}{3}} \mathbb{P} : \tilde{\bm{S}}^{\mathrm{neq}}_{\mathrm{iso}},
\end{gather}
in which $\mathbb V$ is the rank-four viscosity tensor that is assumed to be positive-definite. Noticing that $\rho$ is typically defined on the current configuration, we invoke $\bm \varphi_t^{-1}$ in \eqref{eq:Gibbs-constitutive-rho-S-Q}$_1$ to make a proper definition of it. With the above constitutive relations, the internal dissipation becomes
\begin{align*}
\mathcal{D} = \left( \frac12 \frac{d\bm \Gamma}{dt} \right) : \mathbb V : \left( \frac12 \frac{d\bm \Gamma}{dt} \right) = \bm Q : \mathbb V^{-1} : \bm Q
\end{align*}
and remains non-negative for arbitrary kinematic processes. An intuitive interpretation of the isochoric part of this model is made through the spring-dashpot device in Figure \ref{fig:standard_model_Gibbs}. Since the potentials are expressed in terms of $\tilde{\bm E}$, the stress-like tensors \eqref{eq:tilde_stress} are defined in the following refined manner,
\begin{align}
\label{eq:refined-def-stress-like-tensors}
\tilde{\bm{S}}^{\infty}_{\mathrm{iso}} = \tilde{\bm T}^{\infty} : \tilde{\mathbb Q}, \quad \tilde{\bm{S}}_{\mathrm{iso}}^{\mathrm{neq}} = \tilde{\bm{T}}^{\mathrm{neq}} : \tilde{\mathbb{Q}}, \quad \bm{Q} = \tilde{\bm{T}}^{\mathrm{neq}} : \mathbb{Q}^{\mathrm{v}},
\end{align}
with
\begin{align}
\label{eq:tilde_T_equilibrated}
\tilde{\bm{T}}^{\infty} := \frac{\partial G^{\infty}_{\mathrm{iso}}}{\partial \tilde{\bm{E}}}=2 \mu^{\infty} \tilde{\bm{E}} \quad \mbox{and} \quad \tilde{\bm{T}}^{\mathrm{neq}} := \frac{\partial \Upsilon}{\partial \tilde{\bm{E}}} = 2 \mu^{\mathrm{neq}} (\tilde{\bm{E}} - \bm{E}^{\mathrm{v}}).
\end{align}

\begin{figure}
\begin{center}
\includegraphics[angle=0, trim=30 120 200 0, clip=true, scale = 0.35]{./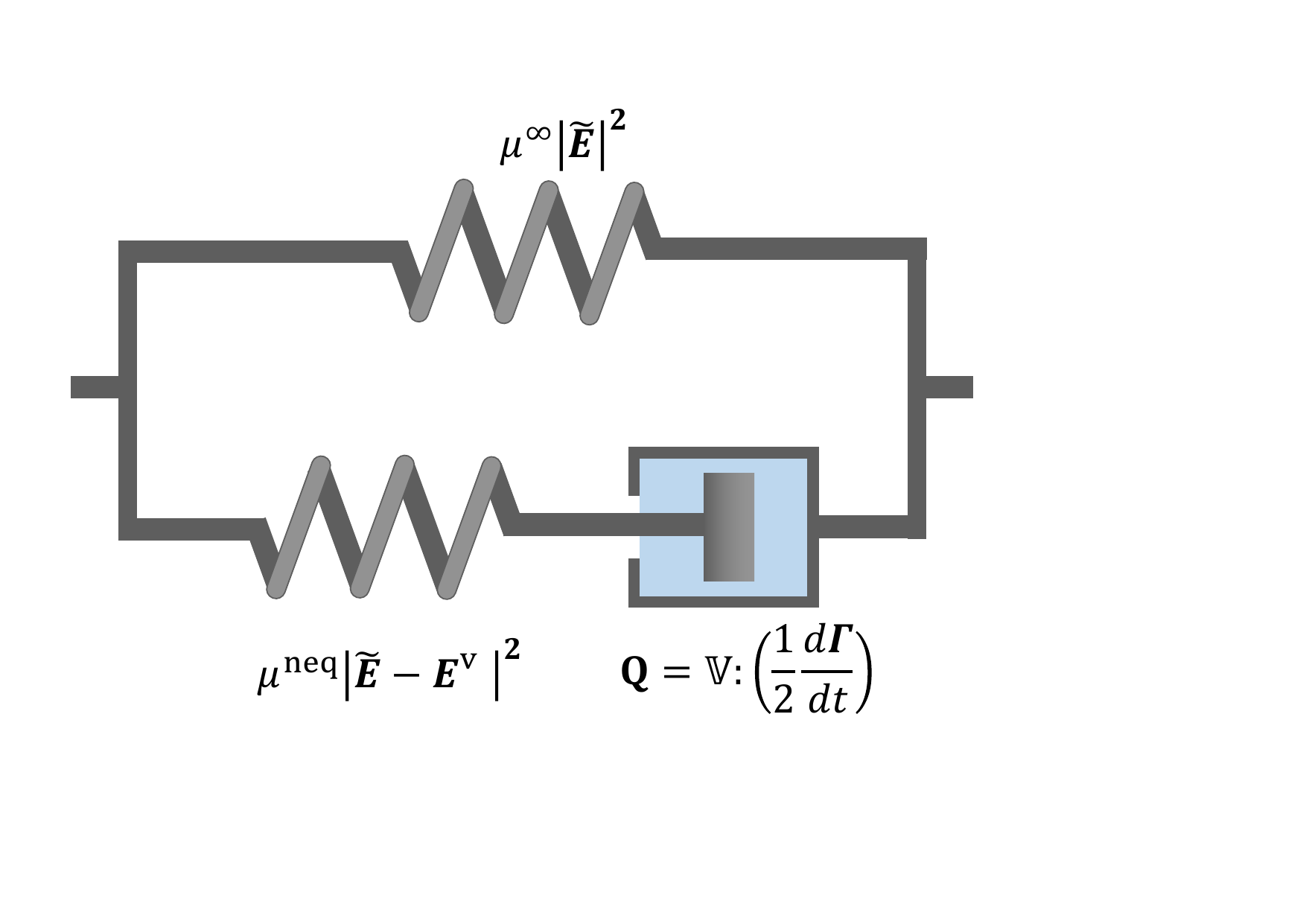}
\end{center}
\caption{An illustration of the theory based on the Gibbs free energy using the spring-dashpot rheological model.}
\label{fig:standard_model_Gibbs}
\end{figure}

Similar to the result of Proposition \ref{eq:proposition_thermodynamic_limit_Hel}, we prove the relaxation property of the stress associated with the potential $\Upsilon$ in the equilibrium state. This property is not obvious because $\bm{S}_{\mathrm{iso}}^{\mathrm{neq}}$ given in \eqref{eq:Gibbs-constitutive-S}$_3$ and $\bm Q$ given in \eqref{eq:Gibbs-constitutive-rho-S-Q}$_3$ are not necessarily identical.
\begin{proposition}
\label{pro:equilibrium}
In the equilibrium state, the non-eqilibrium stress $\bm S_{\mathrm{iso}}^{\mathrm{neq}}$ vanishes, that is,
\begin{align*}
\bm{S}_{\mathrm{iso}}^{\mathrm{neq}} \Big|_{\mathrm{eq}} = \bm{O}.
\end{align*} 
\end{proposition}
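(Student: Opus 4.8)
The strategy mirrors that of Proposition \ref{eq:proposition_thermodynamic_limit_Hel}, and the crucial observation is that although $\bm S_{\mathrm{iso}}^{\mathrm{neq}}$ in \eqref{eq:Gibbs-constitutive-S}$_3$ and $\bm Q$ in \eqref{eq:Gibbs-constitutive-rho-S-Q}$_3$ are distinct tensors, both are generated from the single stress-like tensor $\tilde{\bm T}^{\mathrm{neq}}$ through the refined relations \eqref{eq:refined-def-stress-like-tensors}: one has $\bm Q = \tilde{\bm T}^{\mathrm{neq}} : \mathbb Q^{\mathrm v}$, whereas $\tilde{\bm S}_{\mathrm{iso}}^{\mathrm{neq}} = \tilde{\bm T}^{\mathrm{neq}} : \tilde{\mathbb Q}$ and $\bm S_{\mathrm{iso}}^{\mathrm{neq}} = J^{-\frac23} \mathbb P : \tilde{\bm S}_{\mathrm{iso}}^{\mathrm{neq}}$. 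The plan is therefore to show that the common factor $\tilde{\bm T}^{\mathrm{neq}}$ itself vanishes in the equilibrium state, after which the vanishing of $\bm S_{\mathrm{iso}}^{\mathrm{neq}}$ follows immediately upon composition with the projection tensors.

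Concretely, I would first combine the definition of the equilibrium state \eqref{eq:def-equilibrium-state} with the evolution equation: since $d\bm \Gamma / dt = \bm O$ at equilibrium, the constitutive relation \eqref{eq:Gibbs-constitutive-rho-S-Q}$_3$ gives $\bm Q|_{\mathrm{eq}} = \bm O$. Next, I would invoke Lemma \ref{lemma:Q_v_inv}, which guarantees the inverse $\mathbb Q^{\mathrm v \: -1}$ with $\mathbb Q^{\mathrm v} : \mathbb Q^{\mathrm v \: -1} = \mathbb I$. Contracting the identity $\bm Q = \tilde{\bm T}^{\mathrm{neq}} : \mathbb Q^{\mathrm v}$ on the right with $\mathbb Q^{\mathrm v \: -1}$ yields $\bm Q : \mathbb Q^{\mathrm v \: -1} = \tilde{\bm T}^{\mathrm{neq}} : \mathbb Q^{\mathrm v} : \mathbb Q^{\mathrm v \: -1} = \tilde{\bm T}^{\mathrm{neq}}$, so that $\tilde{\bm T}^{\mathrm{neq}}|_{\mathrm{eq}} = \bm Q|_{\mathrm{eq}} : \mathbb Q^{\mathrm v \: -1} = \bm O$. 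Substituting this into $\tilde{\bm S}_{\mathrm{iso}}^{\mathrm{neq}} = \tilde{\bm T}^{\mathrm{neq}} : \tilde{\mathbb Q}$ gives $\tilde{\bm S}_{\mathrm{iso}}^{\mathrm{neq}}|_{\mathrm{eq}} = \bm O$, and a final contraction with $J^{-\frac23}\mathbb P$ delivers $\bm S_{\mathrm{iso}}^{\mathrm{neq}}|_{\mathrm{eq}} = \bm O$, as claimed.

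The only genuine subtlety---precisely the point flagged before the statement---is that $\bm S_{\mathrm{iso}}^{\mathrm{neq}} \neq \bm Q$ because the geometric postprocessing uses $\tilde{\mathbb Q}$ for the stress but $\mathbb Q^{\mathrm v}$ for the driving force, and $\bm \Gamma$ need not be coaxial with $\tilde{\bm C}$; hence one cannot conclude the result by simply equating the two tensors. The resolution is that the invertibility afforded by Lemma \ref{lemma:Q_v_inv}---itself a consequence of the monotonicity and smoothness demanded of the scale function $E^{\mathrm v}$---permits stripping off the projection $\mathbb Q^{\mathrm v}$ and isolating the common factor $\tilde{\bm T}^{\mathrm{neq}}$. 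Once $\tilde{\bm T}^{\mathrm{neq}}$ is shown to vanish, the differing projections $\tilde{\mathbb Q}$ and $\mathbb P$ become immaterial, as contracting the zero tensor with any rank-four tensor returns $\bm O$. Note that the Legendre transformation and the volumetric split play no role here, since they affect only $\bm S_{\mathrm{vol}}$ and $\bm S_{\mathrm{iso}}^{\infty}$, and thus the argument is essentially identical in structure to that of Proposition \ref{eq:proposition_thermodynamic_limit_Hel}.
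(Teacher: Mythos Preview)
Your proposal is correct and follows essentially the same approach as the paper: use Lemma~\ref{lemma:Q_v_inv} to invert $\mathbb Q^{\mathrm v}$ in the relation $\bm Q = \tilde{\bm T}^{\mathrm{neq}} : \mathbb Q^{\mathrm v}$, conclude $\tilde{\bm T}^{\mathrm{neq}}|_{\mathrm{eq}} = \bm O$ from $\bm Q|_{\mathrm{eq}} = \bm O$, and then propagate the vanishing through $\tilde{\mathbb Q}$ and $J^{-2/3}\mathbb P$. Your write-up is in fact more thorough than the paper's terse version, but the logical skeleton is identical.
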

\begin{proof}
Due to Lemma \ref{lemma:Q_v_inv}, multiplying $\mathbb{Q}^{\mathrm{v} \: -1}$ at both sides of \eqref{eq:refined-def-stress-like-tensors}$_3$ leads to
\begin{align*}
\bm{Q} : \mathbb{Q}^{\mathrm{v} \: -1} = \tilde{\bm{T}}^{\mathrm{neq}} : \mathbb{Q}^{\mathrm{v}} : \mathbb{Q}^{\mathrm{v} \: -1} = \tilde{\bm{T}}^{\mathrm{neq}}.
\end{align*}
It is known that $\bm Q|_{\mathrm{eq}} = \bm O$. This implies $\tilde{\bm{T}}^{\mathrm{neq}}|_{\mathrm{eq}} = \bm{O}$, which leads to $\tilde{\bm{S}}_{\mathrm{iso}}^{\mathrm{neq}}|_{\mathrm{eq}} =\bm{S}_{\mathrm{iso}}^{\mathrm{neq}}|_{\mathrm{eq}} = \bm{O}$.
\end{proof}

\begin{remark}
\label{remark:configurational-free-energy}
If the strains are taken as the Green-Lagrange strain, we have $\tilde{\mathbb Q} = \mathbb Q^{\mathrm{v} } = \mathbb I$. Consequently, one has 
\begin{align*}
\tilde{\bm S}^{\mathrm{neq}}_{\mathrm{iso}} = \bm{Q} = \mu^{\mathrm{neq}} \left( \tilde{\bm {C}}  - \bm{\Gamma} \right),
\end{align*}
and the configurational free energy $\Upsilon$ is identical to the energy \eqref{eq:intro-Upsilon-HSSK} given at the beginning of this article. If the viscosity tensor $\mathbb V$ is isotropic, the derived constitutive model recovers the finite linear viscoelasticity model \cite{Simo1987,Holzapfel1996} discussed in Part \rom{1}. In this regard, we may view the model proposed here as the nonlinear generalization of the Holzapfel-Simo-Saint Venant-Kirchhoff model.
\end{remark}

\begin{remark}
In this work, the evolution equation is obtained by combining \eqref{eq:Gibbs-constitutive-rho-S-Q}$_3$ and \eqref{eq:refined-def-stress-like-tensors}$_3$, which results in
\begin{align*}
\bm Q = \mathbb V : \left( \frac12 \frac{d\bm \Gamma}{dt} \right) = 2\mu^{\mathrm{neq}} \left( \tilde{\bm E} - \bm E^{\mathrm v} \right) : \mathbb Q^{\mathrm v}.
\end{align*}
Noticing that $\bm E^{\mathrm v}$ generally depends on $\bm \Gamma$ in a nonlinear manner, the above is a nonlinear evolution equation for $\bm \Gamma$. In our previous study, we name it as a strain-driven format because the primitive unknown $\bm \Gamma$ is deformation-like \cite{Liu2023a}. If both $\tilde{\bm E}$ and $\bm E^{\mathrm v}$ are chosen as the Green-Lagrange strain and $\mathbb V = 2\eta \mathbb I$, we have the simplified evolution equation as 
\begin{align*}
\bm Q = \eta : \frac{d\bm \Gamma}{dt} = \mu^{\mathrm{neq}} \left( \tilde{\bm C} - \bm \Gamma \right).
\end{align*}
Taking time derivative at both sides of the above leads to
\begin{align*}
\frac{d\bm Q}{dt} + \frac{\mu^{\mathrm{neq}}}{\eta} \bm Q = \mu^{\mathrm{neq}} \frac{d\tilde{\bm C}}{dt},
\end{align*}
which becomes an evolution equation for the stress-like tensor $\bm Q$. The above stress-driven evolution equation is in fact the most considered type for the finite deformation linear viscoelasticity models \cite{Simo1987,Simo2006,Holzapfel1996a,Holzapfel2001,Liu2021b}. Nevertheless, deriving an evolution equation for $\bm Q$ in the nonlinear model becomes non-trivial. More importantly, working on the evolution equation for the deformation-like tensor $\bm \Gamma$ may lead to constitutive integration algorithms that preserve critical physical and mathematical structures \cite{Liu2023a}. In this study, we do not attempt to derive an evolution equation for $\bm Q$. Instead, our constitutive integration is based on the strain-driven evolution equation.
\end{remark}

\begin{remark}
The volumetric energy $\Psi^{\infty}_{\mathrm{vol}}$ is typically convex, guaranteeing the validity of the Legendre transformation. Readers may refer to \cite[Sec.~2.4]{Liu2018} for a discussion of some commonly used volumetric energies $\Psi^{\infty}_{\mathrm{vol}}$ and their counterparts $G^{\infty}_{\mathrm{vol}}$.
\end{remark}

\begin{figure}
	\begin{center}
		\includegraphics[angle=0, trim=180 100 180 70, clip=true, scale = 0.5]{./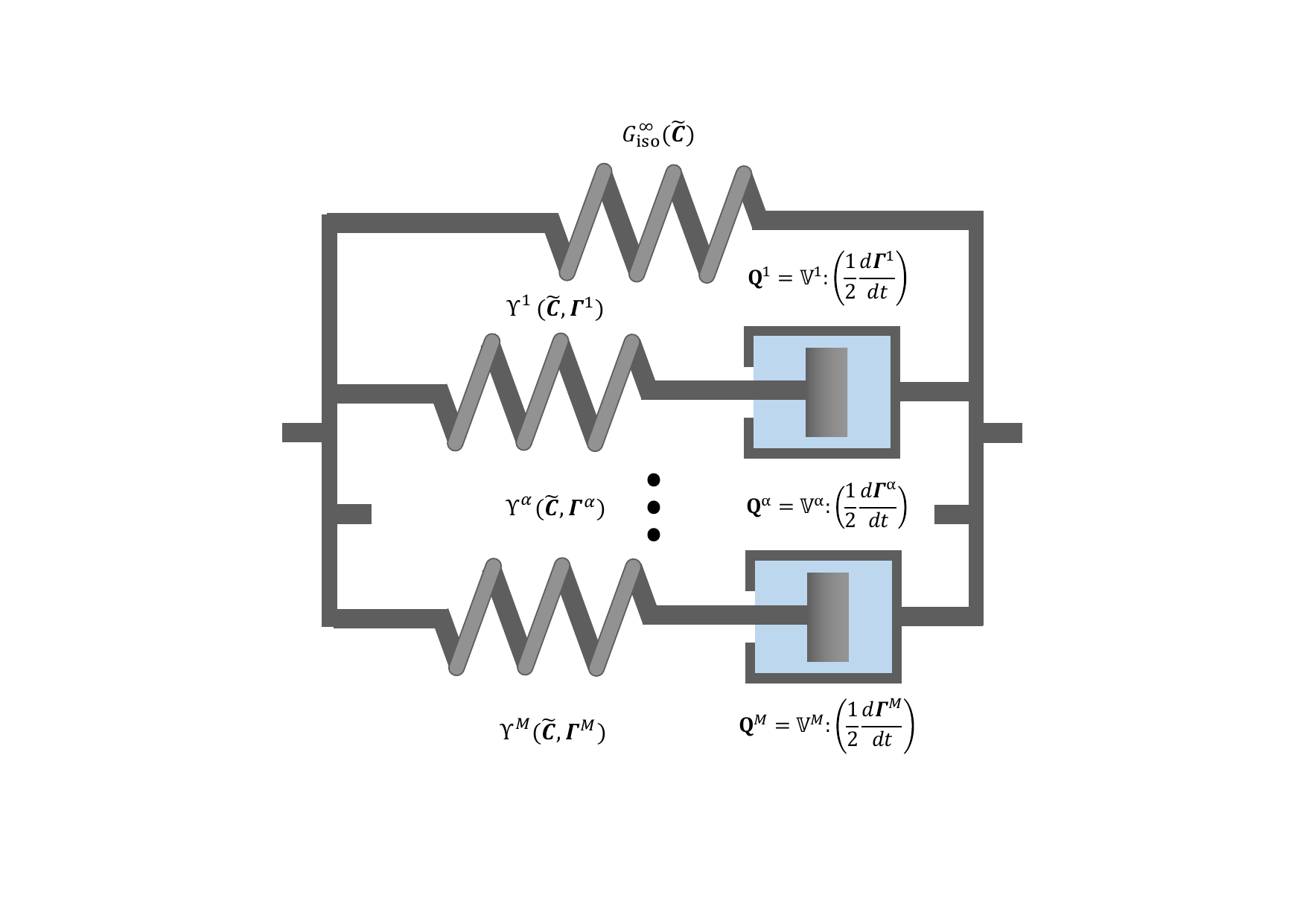}
	\end{center}
	\caption{An illustration of the multiple relaxation processes based on the Gibbs free energy by the spring-dashpot device.}
	\label{fig:multiple_relaxation}
\end{figure}

\subsubsection{Generalization: Hill's hyperelasticity of multiple terms and multiple relaxation processes}
\label{sec:gen-gibbs-multiple-terms-multiple-relax}
Similar to the generalization made in Section \ref{sec:helmholtz-gen-hyperelasticity}, we consider the hyperelasticity of Hill's class with multiple terms. The free energies $G^{\infty}_{\mathrm{iso}}$ and $\Upsilon$ are modified by using multiple quadratic terms,
\begin{align}
\label{eq:Gibbs_potential_multiple_terms_multiple_relax}
G^{\infty}_{\mathrm{iso}}( \tilde{\bm{C}} ) = \sum_{\beta = 1}^{N}  G^{\infty}_{\mathrm{iso} \: \beta}( \tilde{\bm{C}} ), \quad \Upsilon( \tilde{\bm{C}}, \bm{\Gamma}^1, \cdots, \bm \Gamma^{M} ) =  \sum_{\alpha = 1}^{M} \Upsilon^{\alpha}( \tilde{\bm{C}}, \bm{\Gamma}^{\alpha} ),
\end{align}
with
\begin{align}
\label{eq:Hill_Gibbs_Upsilon}
G^{\infty}_{\mathrm{iso} \: \beta}(\tilde{\bm C}) := \mu^{\infty}_{\beta} \left | \tilde{\bm{E}}^{\infty}_{\beta}(\tilde{\bm{C}}) \right |^2, \quad \Upsilon^{\alpha}(\tilde{\bm C}, \bm \Gamma^{\alpha}) := \mu^{\alpha} \left |\tilde{\bm{E}}^{\alpha} (\tilde{\bm{C}}) - \bm{E}^{\mathrm{v} \: \alpha}(\bm{\Gamma}^{\alpha}) \right |^2.
\end{align}
The values of $N$ and $M$ represent the number of quadratic terms used in the equilibrium energy and the number of relaxation processes, respectively. A rheological model is depicted in Figure \ref{fig:multiple_relaxation} to illustrate this generalization. The stresses are modified as 
\begin{align}
\label{eq:stress-like-tensors}
\tilde{\bm{S}}^{\infty}_{\mathrm{iso}} = \sum^{N}_{\beta=1} \tilde{\bm{T}}^{\infty}_{\beta} : \tilde{\mathbb{Q}}^{\infty}_{\beta}, \quad \tilde{\bm{S}}^{\mathrm{neq}}_{\mathrm{iso}} = \sum^{M}_{\alpha=1} \tilde{\bm{T}}^{\alpha} : \tilde{\mathbb{Q}}^{\alpha}, \quad \bm{Q}^{\alpha} := -2\frac{\partial \Upsilon}{\partial \bm \Gamma^{\alpha}} = -2 \frac{\partial \Upsilon^{\alpha}}{\partial \bm \Gamma^{\alpha}} = \tilde{\bm{T}}^{\alpha} : \mathbb{Q}^{\mathrm{v} \: \alpha},
\end{align}
with 
\begin{gather}
\label{eq:tilde-stress-like-projection-tensors}
\tilde{\bm{T}}^{\infty}_{\beta} := \frac{\partial G^{\infty}_{\mathrm{iso} \: \beta} }{\partial \tilde{\bm{E}}^{\infty}_{\beta}} = 2\mu^{\infty}_{\beta} \tilde{\bm{E}}^{\infty}_{\beta}, \quad \tilde{\bm{T}}^{\alpha} := \frac{\partial \Upsilon^{\alpha} }{\partial \tilde{\bm{E}}^{\alpha}} = - \frac{\partial \Upsilon^{\alpha} }{\partial \bm{E}^{\mathrm{v} \: \alpha}} = 2\mu^{\alpha} \left( \tilde{\bm{E}}^{\alpha} - \bm{E}^{\mathrm{v} \: \alpha} \right), \displaybreak[2] \\
\label{eq:tilde-stress-like-projection-tensors-2}
\tilde{\mathbb Q}^{\infty}_{\beta} := 2 \frac{\partial \tilde{\bm E}^{\infty}_{\beta}}{\partial  \tilde{\bm C}}, \quad \tilde{\mathbb Q}^{\alpha} := 2 \frac{\partial \tilde{\bm E}^{\alpha}}{\partial \tilde{\bm C}}, \quad \mbox{and} \quad \mathbb Q^{\mathrm v \: \alpha} := 2 \frac{\partial \bm E^{\mathrm v \: \alpha}}{\partial \bm \Gamma^{\alpha}}.
\end{gather}
There are then $M$ evolution equations, which are written as
\begin{align}
\label{eq:evolution-gibbs-generalized}
\bm Q^{\alpha} = \mathbb V^{\alpha} : \left( \frac12 \frac{d\bm \Gamma^{\alpha}}{dt} \right) \quad \mbox{or} \quad \frac12 \frac{d\bm \Gamma^{\alpha}}{dt} = (\mathbb V^{\alpha})^{-1} : \bm Q^{\alpha}, \quad \alpha = 1, \cdots, M.
\end{align}
The internal dissipation of the viscoelastic solid becomes
\begin{align*}
\mathcal D = \sum_{\alpha=1}^{M}\left( \frac12 \frac{d\bm \Gamma^{\alpha}}{dt} \right) : \mathbb V^{\alpha} : \left( \frac12 \frac{d\bm \Gamma^{\alpha}}{dt} \right)  = \sum_{\alpha = 1}^{M} \bm Q^{\alpha} : (\mathbb V^{\alpha})^{-1} : \bm Q^{\alpha}.
\end{align*}
Now we show that the generalization made here does not disturb the result of Proposition \ref{pro:equilibrium}.
\begin{proposition}
\label{pro:equilibrium_gen}
For the generalized model with the potential given in \eqref{eq:Gibbs_potential_multiple_terms_multiple_relax}, the non-eqilibrium stress $\bm S_{\mathrm{iso}}^{\mathrm{neq}}$ vanishes in the equilibrium state, that is,
\begin{align*}
\bm{S}_{\mathrm{iso}}^{\mathrm{neq}} \Big|_{\mathrm{eq}} = \bm{O}.
\end{align*} 
\end{proposition}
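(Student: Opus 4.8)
The plan is to mirror the argument of Proposition \ref{eq:proposition_thermodynamic_limit_Hel_gen}, now transposed into the Gibbs/isochoric setting, and to reduce the vanishing of $\bm S^{\mathrm{neq}}_{\mathrm{iso}}$ to the vanishing of each stress-like tensor $\tilde{\bm T}^{\alpha}$. First I would invoke the equilibrium-state definition \eqref{eq:def-equilibrium-state}, which forces $d\bm \Gamma^{\alpha}/dt = \bm O$ for every $\alpha$; substituting this into each evolution equation \eqref{eq:evolution-gibbs-generalized} immediately yields $\bm Q^{\alpha}|_{\mathrm{eq}} = \bm O$ for $\alpha = 1, \dots, M$.

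The crucial point, and the reason the statement is not immediate, is that $\bm S^{\mathrm{neq}}_{\mathrm{iso}}$ and the collection $\{\bm Q^{\alpha}\}$ are assembled from the \emph{same} stresses $\tilde{\bm T}^{\alpha}$ but through \emph{different} projection tensors, namely $\tilde{\mathbb Q}^{\alpha}$ versus $\mathbb Q^{\mathrm v \: \alpha}$ (see \eqref{eq:stress-like-tensors}). Hence knowing that each $\bm Q^{\alpha}$ vanishes does not by itself give $\bm S^{\mathrm{neq}}_{\mathrm{iso}} = \bm O$. To bridge this gap I would apply Lemma \ref{lemma:Q_v_inv} to each internal variable $\bm \Gamma^{\alpha}$ separately, its monotonicity and smoothness hypotheses holding for every viscous scale function $E^{\mathrm v}(\cdot)$, to produce the inverse $\mathbb Q^{\mathrm v \: \alpha \: -1}$. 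Contracting the relation $\bm Q^{\alpha} = \tilde{\bm T}^{\alpha} : \mathbb Q^{\mathrm v \: \alpha}$ from \eqref{eq:stress-like-tensors} with $\mathbb Q^{\mathrm v \: \alpha \: -1}$ then isolates the stress factor,
\[
\bm Q^{\alpha} : \mathbb Q^{\mathrm v \: \alpha \: -1} = \tilde{\bm T}^{\alpha} : \mathbb Q^{\mathrm v \: \alpha} : \mathbb Q^{\mathrm v \: \alpha \: -1} = \tilde{\bm T}^{\alpha},
\]
so that $\bm Q^{\alpha}|_{\mathrm{eq}} = \bm O$ propagates to $\tilde{\bm T}^{\alpha}|_{\mathrm{eq}} = \bm O$ for each $\alpha$.

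Finally I would reassemble the non-equilibrium stress. Since $\tilde{\bm S}^{\mathrm{neq}}_{\mathrm{iso}} = \sum_{\alpha=1}^{M} \tilde{\bm T}^{\alpha} : \tilde{\mathbb Q}^{\alpha}$, the vanishing of every $\tilde{\bm T}^{\alpha}|_{\mathrm{eq}}$ gives $\tilde{\bm S}^{\mathrm{neq}}_{\mathrm{iso}}|_{\mathrm{eq}} = \bm O$, and the constitutive relation \eqref{eq:Gibbs-constitutive-S}, $\bm S^{\mathrm{neq}}_{\mathrm{iso}} = J^{-2/3}\mathbb P : \tilde{\bm S}^{\mathrm{neq}}_{\mathrm{iso}}$, transfers this to $\bm S^{\mathrm{neq}}_{\mathrm{iso}}|_{\mathrm{eq}} = \bm O$, completing the argument. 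I do not anticipate any genuine obstacle here: the only point requiring care is confirming that the inversion of $\mathbb Q^{\mathrm v \: \alpha}$ is legitimate for each relaxation process individually, which follows from Lemma \ref{lemma:Q_v_inv} applied per-$\alpha$; the remainder is a routine repetition of the single-process proof of Proposition \ref{pro:equilibrium} summed over the $M$ processes.
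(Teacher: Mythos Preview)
Your proposal is correct and follows essentially the same approach as the paper's proof: use the evolution equations \eqref{eq:evolution-gibbs-generalized} to obtain $\bm Q^{\alpha}|_{\mathrm{eq}}=\bm O$, invoke Lemma~\ref{lemma:Q_v_inv} per relaxation process to invert $\mathbb Q^{\mathrm v\:\alpha}$ and deduce $\tilde{\bm T}^{\alpha}|_{\mathrm{eq}}=\bm O$, then sum over $\alpha$ and apply $\bm S^{\mathrm{neq}}_{\mathrm{iso}}=J^{-2/3}\mathbb P:\tilde{\bm S}^{\mathrm{neq}}_{\mathrm{iso}}$. Your added remark about why the result is not immediate (the distinct projections $\tilde{\mathbb Q}^{\alpha}$ versus $\mathbb Q^{\mathrm v\:\alpha}$) is a helpful clarification that the paper leaves implicit.
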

\begin{proof}
According to Lemma \ref{lemma:Q_v_inv}, multiplying $\mathbb{Q}^{\mathrm{v} \: \alpha \: -1}$ at both sides of \eqref{eq:stress-like-tensors}$_3$ leads to
\begin{align*}
\bm{Q}^{\alpha} : \mathbb{Q}^{\mathrm{v} \: \alpha \: -1} = \tilde{\bm{T}}^{\alpha} : \mathbb{Q}^{\mathrm{v} \: \alpha} : \mathbb{Q}^{\mathrm{v}  \: \alpha\: -1} = \tilde{\bm{T}}^{\alpha}.
\end{align*}
It is known that $\bm Q^{\alpha}|_{\mathrm{eq}} = \bm O$ due to the evolution equations \eqref{eq:evolution-gibbs-generalized}. This implies $\tilde{\bm{T}}^{\alpha}|_{\mathrm{eq}} = \bm{O}$, which leads to 
\begin{align*}
\tilde{\bm S}^{\mathrm{neq}}_{\mathrm{iso}}|_{\mathrm{eq}} = \sum^{M}_{\alpha=1} \tilde{\bm{T}}^{\alpha}|_{\mathrm{eq}} : \tilde{\mathbb{Q}}^{\alpha} = \bm O,
\end{align*}
and hence one has $\bm{S}_{\mathrm{iso}}^{\mathrm{neq}}|_{\mathrm{eq}} = J^{-\frac23}\mathbb P : \tilde{\bm S}^{\mathrm{neq}}_{\mathrm{iso}}|_{\mathrm{eq}}  = \bm{O}$.
\end{proof}
We summarize the constitutive relations for finite viscohyperelastic materials discussed in Section \ref{sec:Gibbs_constitutive} as follows.

\begin{myenv}{Constitutive laws for finite viscoelasticity}
  \noindent Gibbs free energy:
  \begin{gather*}
  G(\tilde{\bm C}, P, \bm \Gamma) = G^{\infty}_{\mathrm{vol}}(P) + G^{\infty}_{\mathrm{iso}}(\tilde{\bm{C}}) + \Upsilon(\tilde{\bm{C}}, \bm \Gamma^1, \cdots, \bm \Gamma^{M}), \displaybreak[2] \\
  G^{\infty}_{\mathrm{iso}}(\tilde{\bm{C}})  = \sum_{\beta = 1}^{N} G^{\infty}_{\mathrm{iso} \: \beta}(\tilde{\bm C}), \quad G_{\mathrm{iso}\:\beta}^{\infty}(\tilde{\bm C}) = \mu_{\beta}^{\infty} \left | \tilde{\bm{E}}^{\infty}_{\beta} (\tilde{\bm{C}}) \right |^2, \displaybreak[2] \\
  \Upsilon(\tilde{\bm{C}}, \bm \Gamma^1, \cdots, \bm \Gamma^{M}) = \sum_{\alpha = 1}^{M}  \Upsilon^{\alpha}(\tilde{\bm C}, \bm\Gamma^{\alpha}), \quad  \Upsilon^{\alpha}_{\beta}(\tilde{\bm C}, \bm\Gamma^{\alpha}) = \mu^{\alpha} \left |\tilde{\bm{E}}^{\alpha}(\tilde{\bm{C}})  - \bm{E}^{\mathrm{v} \: \alpha}(\bm{\Gamma}^{\alpha}) \right |^2.
  \end{gather*}
  
  \noindent Projection tensors:
  \begin{gather*}
  \tilde{\mathbb{Q}}^{\infty}_{\beta} = 2\frac{\partial \tilde{\bm{E}}^{\infty}_{\beta}}{\partial \tilde{\bm{C}}}, \quad  \mathbb{P} = \mathbb{I} - \frac{1}{3} \bm{C}^{-1} \otimes \bm{C}, \quad \tilde{\mathbb Q}^{\alpha} := 2\frac{\partial \tilde{\bm E}^{\alpha}}{\partial \tilde{\bm C}}, \quad \mathbb{Q}^{\mathrm{v} \: \alpha} = 2\frac{\partial \bm{E}^{\mathrm{v} \: \alpha}}{\partial \bm{\Gamma}^{\alpha}}.
  \end{gather*}
  
  \noindent Stress:
  \begin{gather*}
  \bm S = \bm S_{\mathrm{iso}} + \bm S_{\mathrm{vol}}, \quad \bm S_{\mathrm{vol}} = -J P \bm C^{-1}, \quad
  \bm{S}_{\mathrm{iso}} = \bm{S}_{\mathrm{iso}}^{\infty} + \bm{S}_{\mathrm{iso}}^{\mathrm{neq}}, \displaybreak[2] \\
  \bm{S}^{\infty}_{\mathrm{iso}} = J^{-\frac23} \mathbb{P} : \tilde{\bm{S}}_{\mathrm{iso} }^{\infty}, \quad \tilde{\bm{S}}_{\mathrm{iso}}^{\infty} = \sum_{\beta=1}^{N} \tilde{\bm{T}}^{\infty}_{\beta} : \tilde{\mathbb{Q}}^{\infty}_{\beta}, \quad  \tilde{\bm{T}}^{\infty}_{\beta} = 2 \mu_{\beta}^{\infty} \tilde{\bm{E}}^{\infty}_{\beta}(\tilde{\bm C}), \displaybreak[2] \\ 
  \bm{S}^{\mathrm{neq}}_{\mathrm{iso}} = J^{-\frac23} \mathbb{P} : \tilde{\bm{S}}_{\mathrm{iso}}^{\mathrm{neq}}, \quad \tilde{\bm{S}}_{\mathrm{iso}}^{\mathrm{neq}} = \sum_{\alpha=1}^{M} \tilde{\bm{T}}^{\alpha} : \tilde{\mathbb{Q}}^{\alpha}, \quad \tilde{\bm{T}}^{\alpha} = 2 \mu^{\alpha} \left( \tilde{\bm{E}}^{\alpha}(\tilde{\bm C}) - \bm{E}^{\mathrm{v} \: \alpha} ( \bm{\Gamma}^{\alpha}) \right).
  \end{gather*}

  \noindent Constitutive relations for $\bm{Q}^{\alpha}$:
  \begin{gather*}
  \bm{Q}^{\alpha} = \tilde{\bm{T}}^{\alpha} : \mathbb{Q}^{\mathrm{v} \: \alpha} \quad \mbox{and} \quad \mathbb Q^{\mathrm v \: \alpha} := 2 \frac{\partial \bm E^{\mathrm v \: \alpha}}{\partial \bm \Gamma^{\alpha}}.
  \end{gather*}

  \noindent Dissipation:
  \begin{align*}
  \mathcal{D} = \sum_{\alpha=1}^{M}\left( \frac12 \frac{d\bm \Gamma^{\alpha}}{dt} \right) : \mathbb V^{\alpha} : \left( \frac12 \frac{d\bm \Gamma^{\alpha}}{dt} \right)  = \sum_{\alpha = 1}^{M} \bm Q^{\alpha} : (\mathbb V^{\alpha})^{-1} : \bm Q^{\alpha}.
  \end{align*}

  \noindent Evolution equations:
  \begin{align*}
  \frac{d\bm{\Gamma}^{\alpha}}{dt} = (\mathbb V^{\alpha})^{-1} : \bm{Q}^{\alpha}.
  \end{align*} 
\end{myenv}

\section{Numerical aspects}
\label{sec:numerics}
In this section, we design numerical methods for the viscoelasticity model based on the Gibbs free energy. The numerical design involves the spatiotemporal discretization of the balance equations and the integration of the constitutive equation. The former follows the numerical scheme introduced in Part \rom{1}, and our discussion focuses on the consistent linearization of the system as well as the constitutive integration.

\subsection{Consistent linearization}
Consistent linearization is the cornerstone in the Newton-Raphson type algorithm \cite{Simo2006}. In the following, we provide the closed-form formula for the elasticity tensor associated with the model outlined in the previous section. We introduce the isochoric elasticity tensor as
\begin{align}
\label{eq:definition_CC_iso}
\mathbb{C}_{\mathrm{iso}} := 2 \frac{\partial \bm{S}_{\mathrm{iso}}}{\partial \bm{C}} = \mathbb{C}_{\mathrm{iso}}^{\infty} + \mathbb{C}_{\mathrm{iso}}^{\mathrm{neq}}, \quad \mbox{with} \quad \mathbb{C}_{\mathrm{iso}}^{\infty} := 2 \frac{\partial \bm{S}_{\mathrm{iso}}^{\infty}}{\partial \bm{C}} \quad \mbox{and} \quad \mathbb{C}_{\mathrm{iso}}^{\mathrm{neq}} := 2 \frac{\partial \bm{S}_{\mathrm{iso}}^{\mathrm{neq}}}{\partial \bm{C}},
\end{align}
due to the the fact that $\bm{S}_{\mathrm{iso}} = \bm{S}^{\infty}_{\mathrm{iso}} + \bm{S}_{\mathrm{iso}}^{\mathrm{neq}}$. With the representation of $\bm{S}^{\infty}_{\mathrm{iso}}$ given in \eqref{eq:Gibbs-constitutive-S}$_2$, the equilibrium part of $\mathbb C_{\mathrm{iso}}$ can be represented as
\begin{align}
\label{eq:CC_iso_equi}
\mathbb{C}_{\mathrm{iso}}^{\infty} =  \mathbb{P} :  \tilde{\mathbb{C}}_{\mathrm{iso}}^{\infty}  :\mathbb{P}^T + \frac23  \mathrm{Tr} \left(J^{-\frac23}\tilde{\bm{S}}^{\infty}_{\mathrm{iso}} \right) \tilde{\mathbb{P}} -  \frac23 \left( \bm{C}^{-1}\otimes \bm{S}_{\mathrm{iso}}^{\infty} + \bm{S}_{\mathrm{iso}}^{\infty}\otimes \bm{C}^{-1} \right), 
\end{align}
with the following notations,
\begin{align}
\label{eq:tilde_CC_iso_equi}
\tilde{\mathbb{C}}_{\mathrm{iso} }^{\infty} := 2J^{-\frac43} \frac{\partial \tilde{\bm{S}}_{\mathrm{iso} }^{\infty}}{\partial \tilde{\bm{C}}}, \quad
\quad \mathrm{Tr}(\cdot) := (\cdot):\bm{C}, \quad \tilde{\mathbb{P}} := \bm{C}^{-1} \odot \bm{C}^{-1} - \frac13 \bm{C}^{-1} \otimes \bm{C}^{-1}. 
\end{align}
Readers may refer to \cite[p.~255]{Holzapfel2000} for a detailed derivation of the formula \eqref{eq:CC_iso_equi}. Moreover, due to the constitutive model of $\tilde{\bm S}^{\infty}_{\mathrm{iso}}$ given in \eqref{eq:stress-like-tensors}$_1$, we have the following explicit formula for $\tilde{\mathbb{C}}_{\mathrm{iso}}^{\infty}$,
\begin{align}
\tilde{\mathbb{C}}_{\mathrm{iso}}^{\infty} = 2J^{-\frac43}\frac{\partial \tilde{\bm{S}}_{\mathrm{iso} }^{\infty}}{\partial \tilde{\bm{C}}} = J^{-\frac43} \sum_{\beta = 1}^{N} 2 \frac{\partial (\tilde{\bm{T}}_{\beta}^{\infty} : \tilde{\mathbb{Q}}^{\infty}_{\beta} ) }{\partial \tilde{\bm C}} &= J^{-\frac43} \sum_{\beta = 1}^{N} \left( \tilde{\mathbb{Q}}_{\beta}^{\infty \: T} : 2\frac{\partial \tilde{\bm{T}}_{\beta}^{\infty}}{\partial \tilde{\bm{C}}} + \tilde{\bm{T}}_{\beta}^{\infty} : 2\frac{\partial \tilde{\mathbb{Q}}^{\infty}_{\beta}}{\partial \tilde{\bm{C}}} \right) \nonumber \displaybreak[2] \\
\label{eq:S_eq_C}
&= J^{-\frac43} \sum_{\beta = 1}^{N} \left( 2\mu_{\beta}^{\infty} \tilde{\mathbb{Q}}_{\beta}^{\infty \: T} : \tilde{\mathbb{Q}}^{\infty}_{\beta} + \tilde{\bm{T}}_{\beta}^{\infty} : \tilde{\mathbb{L}}^{\infty}_{\beta} \right) , 
\end{align}
with
\begin{align}
\label{eq:tilde_L_beta}
\tilde{\mathbb L}^{\infty}_{\beta} := 4 \frac{\partial^2 \tilde{\bm E}^{\infty}_{\beta}}{\partial \tilde{\bm C} \partial \tilde{\bm C}} = 2 \frac{\partial \tilde{\mathbb Q}^{\infty}_{\beta}}{\partial \tilde{\bm C}}.
\end{align}
In the above, we have used $2 \partial \tilde{\bm{T}}_{\beta}^{\infty}/\partial \tilde{\bm{E}}_{\beta} = 2 \mu_{\beta}^{\infty} \mathbb{I}$ and the definition of $\tilde{\mathbb Q}^{\infty}_{\beta}$  given in \eqref{eq:tilde-stress-like-projection-tensors-2}$_1$. 

The derivation and final expression of $\mathbb{C}_{\mathrm{iso}}^{\mathrm{neq}}$ in \eqref{eq:definition_CC_iso} are analogous to those of $\mathbb{C}_{\mathrm{iso}}^{\infty}$ above. We give the final closed-form formula of $\mathbb{C}_{\mathrm{iso}}^{\mathrm{neq}}$ as 
\begin{align}
\label{eq:CC_iso_nonequi}
\mathbb{C}_{\mathrm{iso}}^{\mathrm{neq}} =\mathbb{P} : \tilde{\mathbb{C}}_{\mathrm{iso}}^{ \mathrm{neq} }:\mathbb{P}^T + \frac23  \mathrm{Tr} \left(J^{-\frac23} \tilde{\bm{S}}^{\mathrm{neq} }_{\mathrm{iso}} \right) \tilde{\mathbb{P}} -  \frac23 \left( \bm{C}^{-1}\otimes \bm{S}_{\mathrm{iso}}^{\mathrm{neq}} + \bm{S}_{\mathrm{iso}}^{\mathrm{neq}}\otimes \bm{C}^{-1} \right),
\end{align}
with
\begin{align}
\label{eq:elements_in_CC_neq}
\tilde{\mathbb{C}}_{\mathrm{iso}}^{\mathrm{neq}} := \sum_{\alpha = 1}^{M} J^{-\frac43} \left( 2\mu^{\alpha} \tilde{\mathbb{Q}}^{\alpha \: T} : \tilde{\mathbb{Q}}^{\alpha} + \tilde{\bm{T}}^{\alpha} : \tilde{\mathbb{L}}^{\alpha}\right), \quad \mbox{and} \quad \tilde{\mathbb L}^{ \alpha} := 4 \frac{\partial^2 \tilde{\bm E}^{\alpha}}{\partial \tilde{\bm C} \partial \tilde{\bm C}} = 2 \frac{\partial \tilde{\mathbb Q}^{\alpha}}{\partial \tilde{\bm C}}.
\end{align}
In the derivation of the above, we have used the fact that $2 \partial \tilde{\bm{T}}^{\alpha } / \partial \tilde{\bm{E}}^{\alpha} = 2 \mu^{\alpha} \mathbb{I}$ and the definitions of $\tilde{\mathbb Q}^{\alpha}$ and $\tilde{\bm{S}}^{\mathrm{neq} }_{\mathrm{iso}}$ given in the previous section.

\subsection{Evolution equation integration}
\label{sec:evolution_equation_integration}
The algorithmic procedure for evaluating $\bm Q^{\alpha}$ is based on the evolution equations for $\bm \Gamma^{\alpha}$, which reads 
\begin{align}
\label{eq:alpha-th-evolution-eqn}
\frac{d\bm{\Gamma}^{\alpha}}{dt} = (\mathbb V^{\alpha})^{-1} : \bm{Q}^{\alpha}.
\end{align}
We assume material is Newtonian, meaning $\mathbb V^{\alpha}$ is a constant tensor. Integrating the nonlinear evolution equations entails a local Newton-Raphson iterative procedure at each quadrature point. During the linearization process, it is necessary to invoke a set of rank-four tensors defined as follows, 
\begin{align}
\label{eq:definition_K}
 \mathbb{K}^{\alpha} :=  2 \frac{\partial \bm{Q}^{\alpha}} {  \partial \bm{\Gamma}^{\alpha} }  =   2 \frac{\partial (\tilde{\bm{T}}^{\alpha }: \mathbb{Q}^{\mathrm{v} \: \alpha })}{\partial \bm{\Gamma}^{\alpha}} &= \left( \mathbb{Q}^{\mathrm{v} \: \alpha \: T} : 2\frac{\partial \tilde{\bm{T}}^{\alpha}}{\partial \bm{\Gamma}^{\alpha}} + \tilde{\bm{T}}^{\alpha} : 2\frac{\partial \mathbb{Q}^{\mathrm{v} \: \alpha}}{\partial \bm{\Gamma}^{\alpha}} \right)  \nonumber \displaybreak[2] \\
&  = - 2 \mu^{\alpha} \mathbb{Q}^{\mathrm{v} \: \alpha \:  T} :  \mathbb{Q}^{\mathrm{v} \: \alpha} + \tilde{\bm{T}}^{\alpha} :\mathbb{L}^{\mathrm{v} \: \alpha}, 
\end{align}
with 
\begin{align}
\label{eq:Lv_alpha_beta}
\mathbb L^{\mathrm{v} \: \alpha} :=  4 \frac{\partial^2 \bm E^{\mathrm{v} \: \alpha}}{\partial \bm \Gamma^{\alpha} \partial \bm \Gamma^{\alpha}} = 2 \frac{\partial \mathbb Q^{\mathrm{v} \: \alpha}}{\partial \bm \Gamma^{\alpha}}.
\end{align}
In the above, we have used $2 \partial \tilde{\bm{T}}^{\alpha} / \partial \bm{E}^{\mathrm{v} \: \alpha} = -2 \mu^{\alpha} \mathbb{I}$ and the definition of $\mathbb{Q}^{\mathrm{v} \: \alpha}$ given in Seciton \ref{sec:gen-gibbs-multiple-terms-multiple-relax}. 

Let the time interval $(0, T)$ be divided into $n_{\mathrm{ts}}$ subintervals of size $\Delta t_n := t_{n+1} - t_n$ delimited by a discrete time vector $\left\lbrace t_n \right\rbrace_{n=0}^{n_{\mathrm{ts}}}$.  We integrate the evolution equations \eqref{eq:alpha-th-evolution-eqn} by the mid-point rule, leading to the following discrete evolution equations,
\begin{align*}
\frac{\bm{\Gamma}_{n+1}^{\alpha} - \bm{\Gamma}_{n}^{\alpha}}{\Delta t_n} =  (\mathbb V^{\alpha})^{-1} : \bm{Q}^{\alpha}\left( \tilde{\bm{C}}_{n+\frac12}, \bm{\Gamma}_{n+\frac12}^{\alpha} \right),
\end{align*}
with
\begin{align*}
(\bullet)_{n+\frac12} := \frac12 \left( (\bullet)_{n} + (\bullet)_{n+1} \right).
\end{align*}
In the above, $(\bullet)_{n}$ represents the approximation of the quantity $(\bullet)$ at the time instance $t_n$. Since the right-hand side of the evolution equations is nonlinear in terms of $\bm{\Gamma}_{n+1}^{\alpha}$,  the local Newton-Raphson iteration procedure must be performed at each quadrature point to determine the internal state of each relaxation process. We summarized the local Newton-Raphson iteration procedure for solving $\bm{\Gamma}_{n+1}^{\alpha}$ as follows.

\begin{myenv}{Local Newton-Raphson iteration}
\noindent \textbf{Predictor stage}: Set
\begin{align*} 
\bm{\Gamma}_{n+1\:(0)}^{\alpha} = \bm{\Gamma}_{n}^{\alpha},
\end{align*}
for $\alpha =1, ..., M$.

\noindent \textbf{Multi-Corrector stage}: Repeat the following steps for $\alpha =1, ..., M$.
\begin{enumerate}
\item Construct the local residual
\begin{align*} 
\bm{\mathrm{r}}_{n+1\:(i)}^{\alpha} = \bm{\Gamma}_{n+1\:(i)}^{\alpha} - \bm{\Gamma}_{n}^{\alpha} - \Delta t_n (\mathbb V^{\alpha})^{-1} : \bm{Q}^{\alpha}\left( \tilde{\bm{C}}_{n+\frac12}, \bm{\Gamma}_{n+\frac12}^{\alpha} \right).
\end{align*}
If one of the stopping criteria
\begin{align*}
\frac{\left \vert \bm{\mathrm{r}}_{n+1\:(i)}^{\alpha} \right \vert}{\left \vert \bm{\mathrm{r}}_{n+1\:(0)}^{\alpha} \right \vert} \leq \mathrm{tol}_{\mathrm{r}}, \quad \left \vert \bm{\mathrm{r}}_{n+1\:(i)}^{\alpha} \right \vert \leq \mathrm{tol}_{\mathrm{a}}, \quad \mbox{and} \quad i = i_{\mathrm{max}}
\end{align*} 
is satisfied, set $\bm{\Gamma}_{n+1}^{\alpha} = \bm{\Gamma}_{n+1\:(i)}^{\alpha}$ and exit the multi-corrector stage. Otherwise, continue to Step 2.
\item Linearize the local residual
\begin{align*}
\left. \frac{\partial \bm{\mathrm{r}}_{n+1}^{\alpha}}{\partial \bm{\Gamma}_{n+1}^{\alpha}} \right \vert_{(i)} = \mathbb{I} - \frac{\Delta t_n}{2} (\mathbb V^{\alpha})^{-1} :  \mathbb{K}^{\alpha}\left( \tilde{\bm{C}}_{n+\frac12}, \bm{\Gamma}_{n+\frac12}^{\alpha} \right).
\end{align*}
\item Solve the incremental solution
\begin{align*}
\left( \mathbb{I} - \frac{\Delta t_n}{2} (\mathbb V^{\alpha})^{-1} : \mathbb{K}^{\alpha}\left( \tilde{\bm{C}}_{n+\frac12}, \bm{\Gamma}_{n+\frac12}^{\alpha}\right) \right) : \Delta \bm{\Gamma}_{n+1\:(i)}^{\alpha} = -\bm{\mathrm{r}}_{n+1\:(i)}^{\alpha}.
\end{align*}
\item Update the discrete internal state variable as
\begin{align*}
\bm{\Gamma}_{n+1\:(i+1)}^{\alpha} = \bm{\Gamma}_{ n+1\:(i)}^{\alpha} + \Delta \bm{\Gamma}_{n+1\:(i)}^{\alpha}.
\end{align*}
\end{enumerate}
\end{myenv}

\subsection{Modular implementation}
Given that the material model under consideration relies on the hyperelasticity of Hill's class, the implementation of the constitutive relations can be performed in a modular approach. It follows the algorithmic steps summarized by Miehe in additive plasticity \cite{Miehe1998a,Miehe2002,Friedlein2022} and entails three essential steps: the geometric preprocessing, constitutive model evaluation in the generalized strain space, and geometric postprocessing. Here we describe the modular approach for the viscoelasticity model in details. In the geometric preprocessing step, the strains and projection tensors are evaluated based on the deformation tensor and internal state variables at each quadrature point.

\noindent \textbf{Geometric preprocessing}: Given the deformation gradient $\bm F_{n+1}$ and the internal state variables $\{\bm \Gamma^{\alpha}_n\}$, perform the following steps.

\begin{enumerate}
\item Calculate the unimodular deformation tensor $\tilde{\bm C}_{n+1}$. 
\item Perform the spectral decomposition on the deformation tensor to obtain the principal stretches $\{\tilde{\lambda}_{a \: n+1}\}$ and Lagrangian principal directions $\{\bm N_{a \: n+1}\}$ for $a=1,2,3$.
\item Based on the scale functions $E^{\infty}_{\beta}$, calculate the strain 
\begin{align*}
\tilde{\bm E}^{\infty}_{\beta \: n+1}  =  \sum_{a=1}^{3} E^{\infty}_{\beta}(\tilde{\lambda}_{a \: n+1}) \bm{M}_{a \: n+1}
\end{align*}
and the related projection tensors $\mathbb Q^{\infty}_{\beta \: n+1}$ for $\beta = 1, \cdots, N$.
\item Based on the scale functions $E^{\alpha}$, calculate the strain 
\begin{align*}
\tilde{\bm E}^{\alpha}_{n+1}  =  \sum_{a=1}^{3} E^{\alpha}(\tilde{\lambda}_{a \: n+1}) \bm{M}_{a \: n+1}
\end{align*}
and the related projection tensors $\tilde{\mathbb Q}^{\alpha}_{n+1}$ and $\tilde{\mathbb L}^{\alpha}_{n+1}$ for $\alpha = 1, \cdots, M$.
\item Obtain $\bm \Gamma^{\alpha}_{n+1}$ for $\alpha=1,\cdots, M$ by invoking the local Newton-Raphson iteration outlined in Section \ref{sec:evolution_equation_integration}.
\item Perform the spectral decompositions on $\{\bm \Gamma^{\alpha}_{n+1}\}$ to obtain $\{\Gamma^{\alpha}_{a \: n+1}\}$ and $\{\bar{\bm N}^{\alpha}_{a \: n+1}\}$ for $\alpha = 1, \cdots, M$ and $a = 1, 2, 3$.
\item Based on the scale functions $E^{\mathrm v \: \alpha}$, calculate the strain 
\begin{align*}
\bm E^{\mathrm v \: \alpha}_{n+1}  =  \sum_{a=1}^{3} E^{\mathrm v \: \alpha}(\Gamma^{\alpha}_{a \: n+1}) \bar{\bm{M}}_{a \: n+1}
\end{align*}
for $\alpha = 1, \cdots, M$.
\end{enumerate}
\begin{remark}
In the above preprocessing procedures, we invoked the spectral decomposition algorithm developed by Scherzinger and Dohrmann \cite{Scherzinger2008}. Its accuracy is comparable to that of LAPACK \cite{Harari2022}, and its robustness is superior to the conventional Cardano's formula when there are nearly identical eigenvalues \cite{Guan2023}. Additionally, the projection tensors are computed following the formulas \eqref{eq:Q} and \eqref{eq:L} using the corresponding eigenvalues and eigenvectors.
\end{remark}

Moving to the second step, we conduct the constitutive evaluation in the generalized strain space. As a main feature of Hill's hyperelasticity, the conjugate stress-like tensors depend on the generalized strains in a linear fashion and can be straightforwardly evaluated within this step.

\noindent \textbf{Constitutive model evaluation}: Given the generalized strains, perform the following procedures to obtain their work-conjugate pairs as well as the fictitious elasticity tensors.

\begin{enumerate}
\item Evaluate the stresses $\tilde{\bm T}^{\infty}_{\beta \: n+1} = 2\mu^{\infty}_{\beta} \tilde{\bm E}^{\infty}_{\beta \: n+1}$ for $\beta =1, \cdots, N$.
\item Evaluate the stresses $\tilde{\bm T}^{\alpha}_{n+1} = 2\mu^{\alpha}\left( \tilde{\bm E}^{\alpha}_{n+1} - \bm E^{\mathrm v \: \alpha}_{n+1} \right)$ for $\alpha =1, \cdots, M$.
\item Evaluate the fictitious elasticity tensors for the equilibrium part as
\begin{align*}
\tilde{\mathbb{C}}_{\mathrm{iso} \: n+1}^{\infty} = \sum_{\beta = 1}^{N} J_{n+1}^{-\frac43}  \left( 2\mu_{\beta}^{\infty} \tilde{\mathbb{Q}}_{\beta \: n+1}^{\infty \: T} : \tilde{\mathbb{Q}}^{\infty}_{\beta \: n+1} + \tilde{\bm{T}}_{\beta \: n+1}^{\infty} : \tilde{\mathbb{L}}^{\infty}_{\beta \: n+1} \right).
\end{align*}
\item Evaluate the fictitious elasticity tensors for the non-equilibrium part as
\begin{align*}
\tilde{\mathbb{C}}_{\mathrm{iso} \: n+1}^{\mathrm{neq}} := \sum_{\alpha = 1}^{M} J_{n+1}^{-\frac43} \left( 2\mu^{\alpha} \tilde{\mathbb{Q}}^{\alpha \: T}_{n+1} : \tilde{\mathbb{Q}}^{\alpha}_{n+1} + \tilde{\bm{T}}^{\alpha}_{n+1} : \tilde{\mathbb{L}}_{n+1}^{\alpha}\right).
\end{align*}
\end{enumerate}

In the last step, known as geometric postprocessing, the Piola-Kirchhoff stresses and elasticity tensors are obtained by using the projectors.

\noindent \textbf{Geometric postprocessing}: Obtain the second Piola-Kirchhoff stresses and the elasticity tensor through the following procedures.

\begin{enumerate}
\item Obtain the equilibrium part of the fictitious stress as 
\begin{align*}
\tilde{\bm S}^{\infty}_{\mathrm{iso}\: n+1} = \sum_{\beta=1}^{N} \tilde{\bm T}^{\infty}_{\beta \: n+1} : \tilde{\mathbb Q}^{\infty}_{\beta \: n+1}.
\end{align*}
\item Obtain the non-equilibrium part of the fictitious stress as
\begin{align*}
\tilde{\bm S}^{\mathrm{neq}}_{\mathrm{iso} \: n+1} = \sum_{\alpha=1}^{M} \tilde{\bm T}^{\alpha}_{n+1} : \tilde{\mathbb Q}^{\alpha}_{n+1}.
\end{align*}
\item Project the fictitious stresses to the second Piola-Kirchhoff stress as
\begin{align*}
\bm S^{\infty}_{\mathrm{iso} \: n+1} = J^{-\frac23} \mathbb P : \tilde{\bm S}^{\infty}_{\mathrm{iso}\: n+1}, \quad
\bm S^{\mathrm{neq}}_{\mathrm{iso} \: n+1} = J^{-\frac23} \mathbb P : \tilde{\bm S}^{\mathrm{neq}}_{\mathrm{iso} \: n+1}, \quad
\bm S_{n+1} = \bm S^{\infty}_{\mathrm{iso} \: n+1} + \bm S^{\mathrm{neq}}_{\mathrm{iso} \: n+1}.
\end{align*}
\item Obtain the equilibrium part of the elasticity tensor as
\begin{align*}
\mathbb{C}_{\mathrm{iso} \: n+1}^{\infty} =  \mathbb{P}_{n+1} :  \tilde{\mathbb{C}}_{\mathrm{iso} \: n+1}^{\infty}  :\mathbb{P}^T_{n+1} +& \frac23  \mathrm{Tr} \left(J_{n+1}^{-\frac23}\tilde{\bm{S}}^{\infty}_{\mathrm{iso} \: n+1} \right) \tilde{\mathbb{P}}_{n+1} \displaybreak[2] \nonumber \\
&-  \frac23 \left( \bm{C}_{n+1}^{-1}\otimes \bm{S}_{\mathrm{iso} \: n+1}^{\infty} + \bm{S}_{\mathrm{iso} \: n+1}^{\infty}\otimes \bm{C}_{n+1}^{-1} \right).
\end{align*}
\item Obtain the non-equilibrium part of the elasticity tensor as
\begin{align*}
\mathbb{C}_{\mathrm{iso} \: n+1}^{\mathrm{neq}} = \mathbb{P}_{n+1} : \tilde{\mathbb{C}}_{\mathrm{iso} \: n+1}^{ \mathrm{neq} }:\mathbb{P}_{n+1}^T +& \frac23  \mathrm{Tr} \left(J_{n+1}^{-\frac23} \tilde{\bm{S}}^{\mathrm{neq} }_{\mathrm{iso} \: n+1} \right) \tilde{\mathbb{P}}_{n+1} \nonumber \displaybreak[2] \\
&-  \frac23 \left( \bm{C}_{n+1}^{-1}\otimes \bm{S}_{\mathrm{iso} \: n+1}^{\mathrm{neq}} + \bm{S}_{\mathrm{iso} \: n+1}^{\mathrm{neq}}\otimes \bm{C}_{n+1}^{-1} \right).
\end{align*}
\item Obtain the isochoric part of the elasticity tensor as
\begin{align*}
\mathbb C_{\mathrm{iso} \: n+1 } = \mathbb{C}_{\mathrm{iso} \: n+1}^{\infty} + \mathbb{C}_{\mathrm{iso} \: n+1}^{\mathrm{neq}}.
\end{align*}
\end{enumerate}
Through the above three modular procedures, we may obtain the stress and elasticity tensor that are necessary for the assembly of the residual vector and consistent tangent matrix in implicit analysis. We mention that the implementation of the algorithm relies heavily on efficient calculations of tensorial operations, in which the tensor symmetry can be exploited to save memory and computational costs.

\section{Examples}
\label{sec:examples}
In this section, we investigate the performance of the proposed model. The boundary of the domain $\Gamma_{\bm X} := \partial \Omega_{\bm X}$ admits non-overlapping subdivisions as $\Gamma_{\bm X} = \overline{\Gamma^{G}_{\bm X} \cup  \Gamma^{H}_{\bm X}}$. The functions $\bm G$ and $\bm H$ are the boundary displacement and traction prescribed on $\Gamma^{G}_{\bm X}$ and $\Gamma^{H}_{\bm X}$, respectively. In more involved scenarios, we may prescribe, for example, the normal component of the displacement and the tangential components of the traction on a boundary region. That type of boundary condition will be specified with the aid of Cartesian components of the functions $\bm G$ and $\bm H$. The problem is discretized by an inf-sup stable element pair \cite{Liu2019a}. The discrete pressure space is generated by the NURBS basis functions with degree $\mathsf p$ and highest possible continuity. The discrete velocity space is constructed by $p$-refinement to raise the degree to $\mathsf p+1$ while maintaining the continuity. The time integration is based on the generalized-$\alpha$ scheme \cite{Jansen2000,Kadapa2017,Liu2018}, which is parameterized by a single parameter $\varrho_{\infty}$, the spectral radius of the amplification matrix at the highest mode. The following choices are made in the numerical investigations, unless otherwise specified.

\begin{enumerate}
\item The meter-kilogram-second system of units is used;
\item The two-parameter Curnier-Rakotomanana strain family is utilized to characterize the hyperelastic material behavior within Hill's framework;
\item The viscosity tensor is isotropic with the deviatoric and volumetric viscosities being identical (i.e., $\mathbb V^{\alpha} = 2\eta^{\alpha} \mathbb I$);
\item The relaxation times are defined as $\tau^{\alpha} := \eta^{\alpha} / \mu^{\alpha}$;
\item We choose $\varrho_{\infty}=0.0$ in the generalized-$\alpha$ scheme;
\item We use $\mathsf p+2$ Gaussian quadrature points in each direction.
\item We use parameters $\mathrm{tol}_\mathrm{r} = 10^{-10}$, $\mathrm{tol}_\mathrm{a} = 10^{-10}$, $i_{\mathrm{max}} = 5$ in the local Newton-Raphson iteration;
\item We use the relative tolerance $\mathrm{tol}_\mathrm{R} = 10^{-10}$, absolute tolerance $\mathrm{tol}_\mathrm{A} = 10^{-10}$, and maximum number of iterations $l_{\mathrm{max}} = 10$ as the stopping criteria in the global Newton-Raphson iteration;
\end{enumerate}

\subsection{Creep test}
\begin{table}[htbp]
  \centering 
  \begin{tabular}{ m{.45\textwidth} m{.45\textwidth} }
    \hline
    \begin{minipage}{.45\textwidth}
    \centering
      \includegraphics[width=1.15\linewidth, trim=120 270 90 250, clip]{./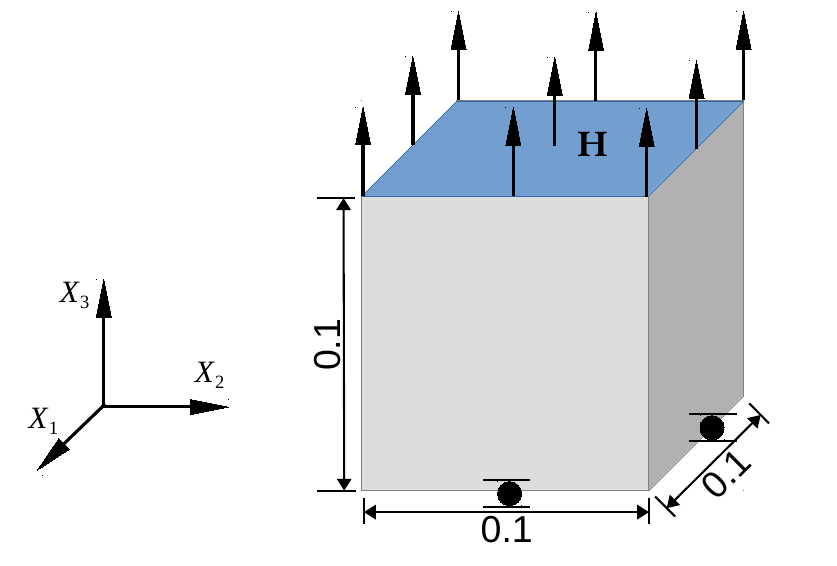}
    \end{minipage}
    & 
    \begin{minipage}{.45\textwidth}
      \begin{itemize}
        \item[] Material properties:
        \item[] $\rho_0 = 1.0\times 10^3$, \quad $N = 1$, \quad $M = 1$,
        \item[] $G_{\mathrm{iso}}^{\infty} = \mu_{1}^{\infty} \left| \tilde{\bm{E}}^{\infty}_{1} \right|^2$,
        \item[] $\mu_1^{\infty} = 4.225 \times 10^5$,
        \item[] $\Upsilon^{1} = \mu^{1} \left| \tilde{\bm{E}}^{1} - \bm{E}^{\mathrm{v}\: 1} \right|^2$,
        \item[] $\mu^1 = 4.225 \times 10^5$.
      \end{itemize}
    \end{minipage}   
    \\
    \hline
  \end{tabular}
  \caption{Creep test: problem setting.}
\label{table:creep_test}
\end{table}

\begin{figure}
\begin{center}
\begin{tabular}{cc}
\includegraphics[angle=0, trim=100 215 100 100, clip=true, scale = 0.18]{./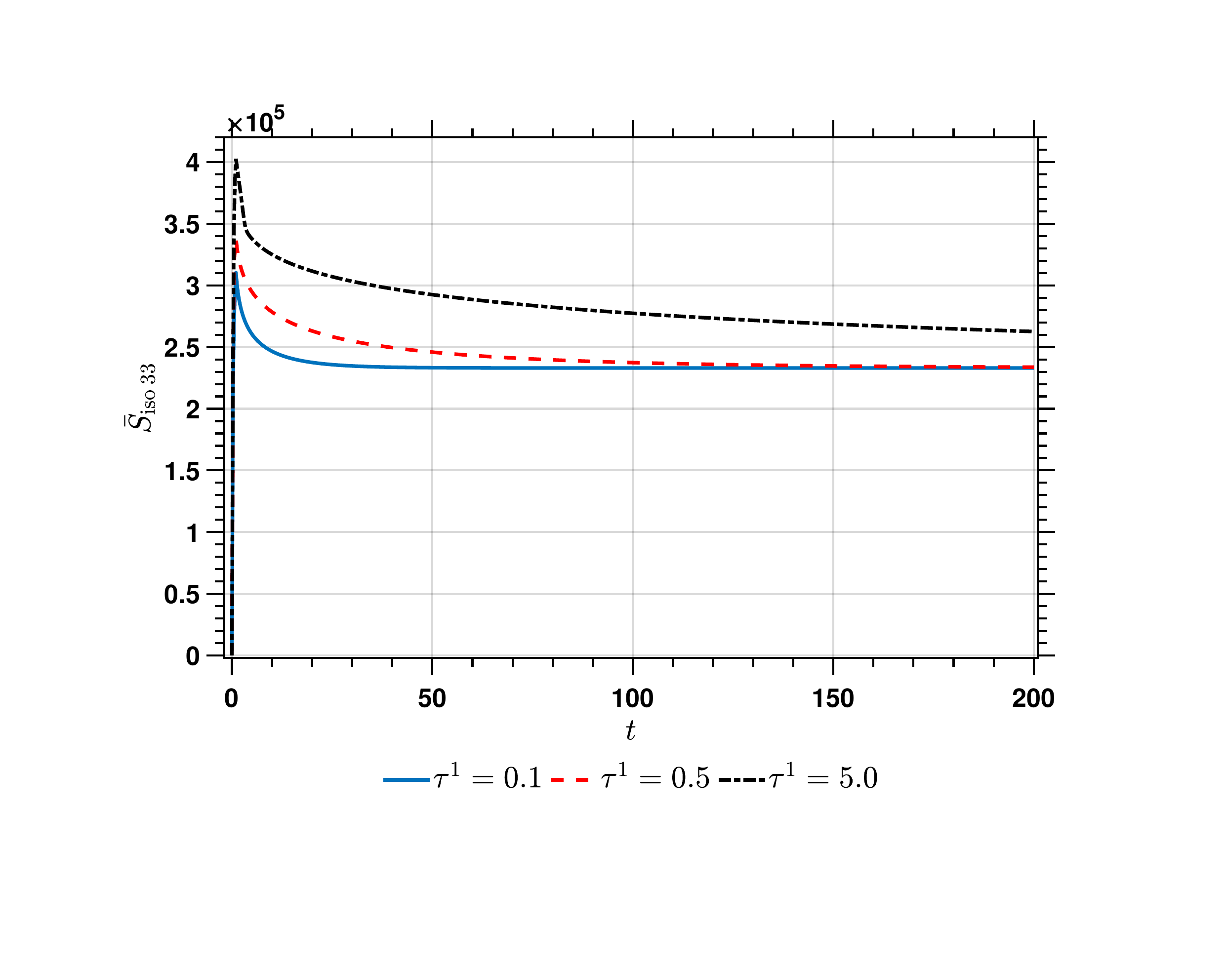} &
\includegraphics[angle=0, trim=100 215 100 100, clip=true, scale = 0.18]{./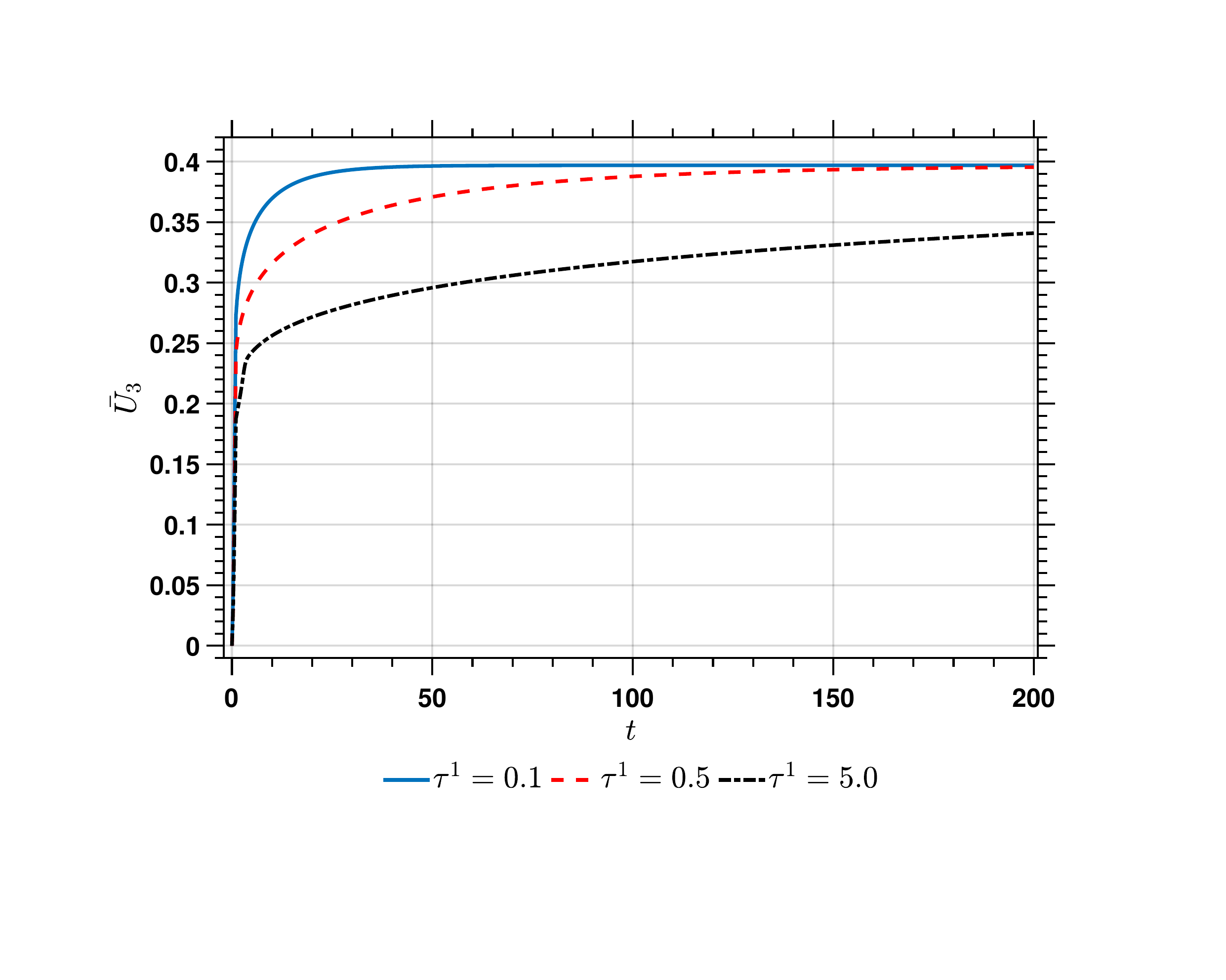} \\
\includegraphics[angle=0, trim=100 215 100 100, clip=true, scale = 0.18]{./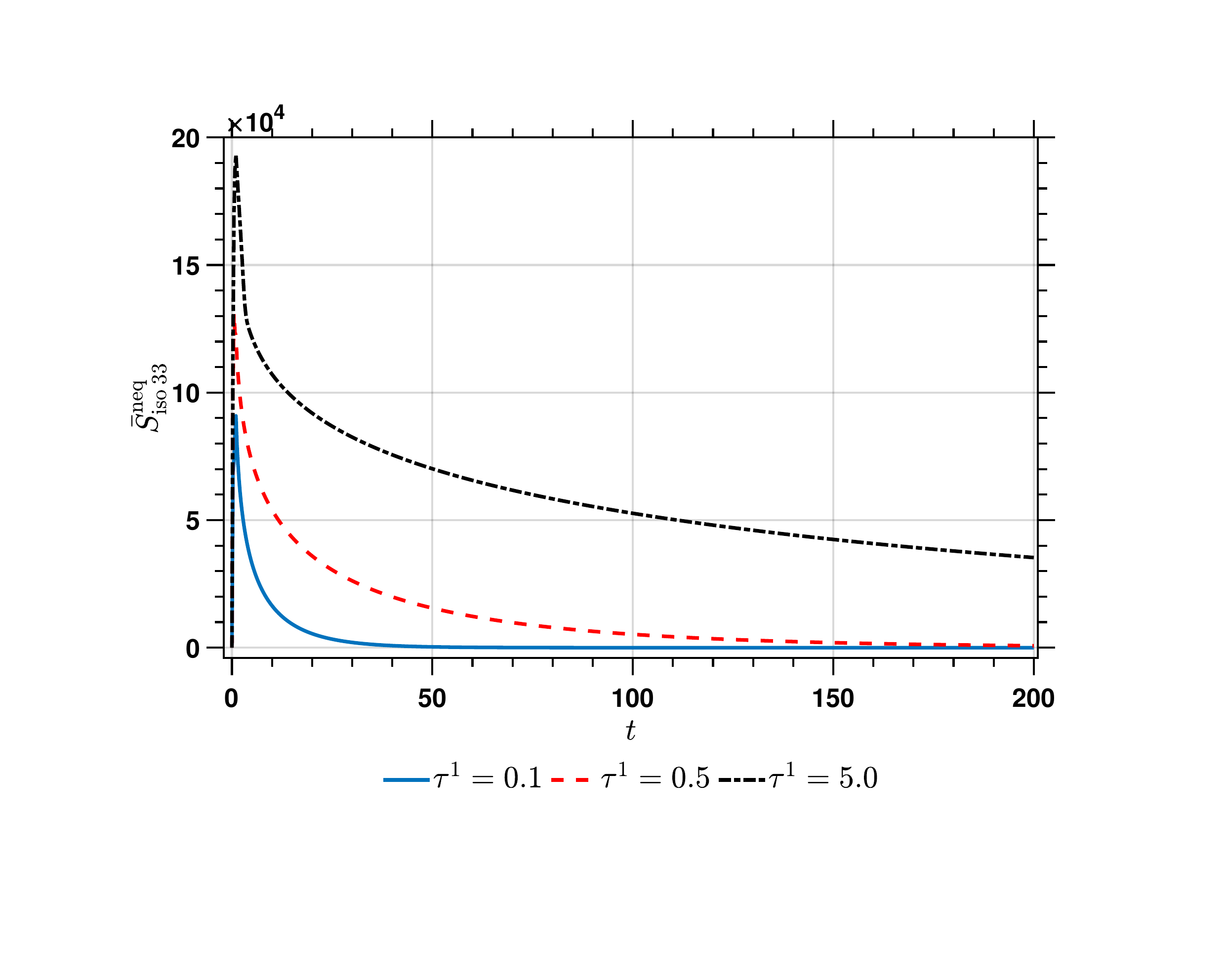} &
\includegraphics[angle=0, trim=100 215 100 100, clip=true, scale = 0.18]{./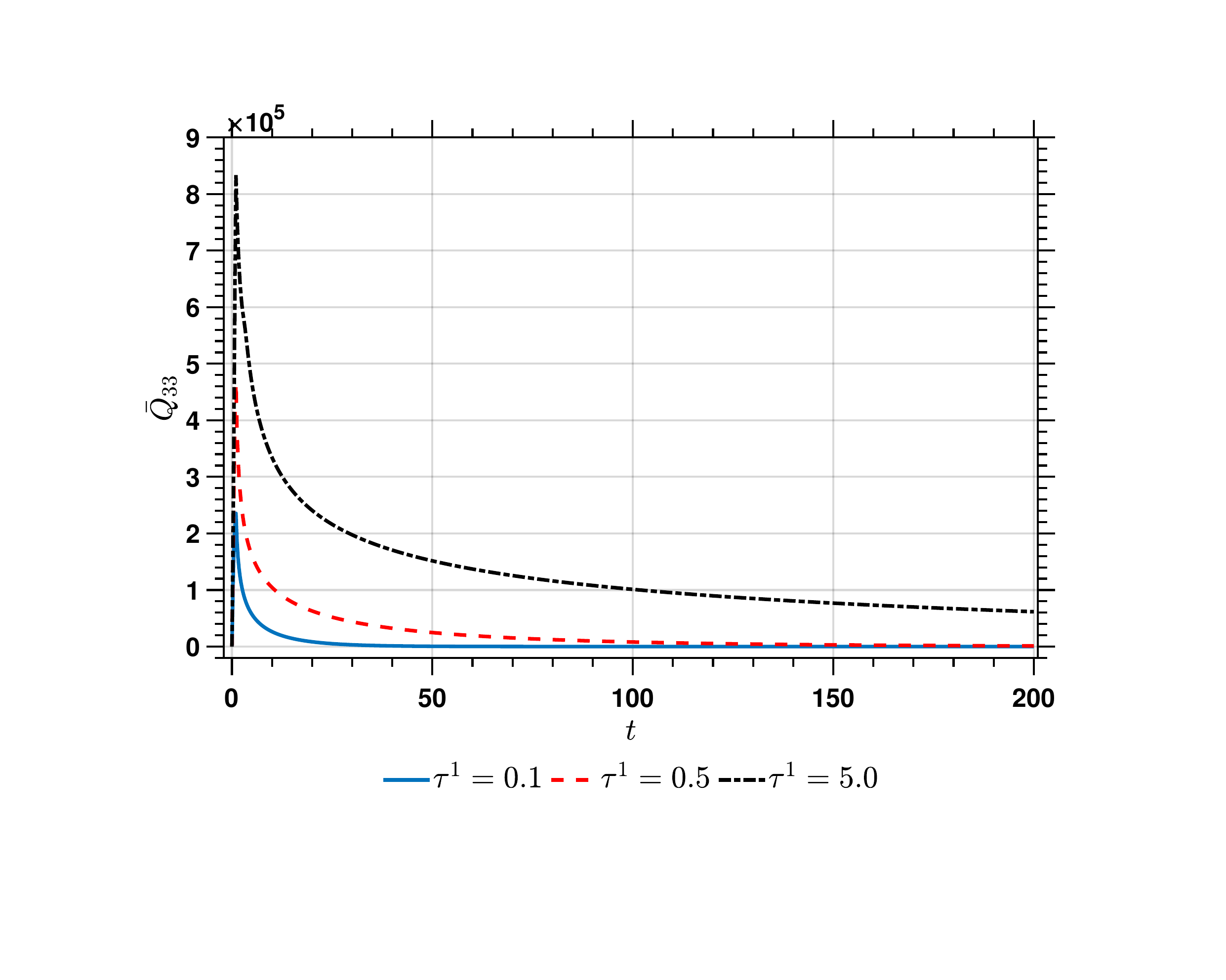} \\
\multicolumn{2}{c}{ \includegraphics[angle=0, trim=250 180 250 720, clip=true, scale = 0.25]{./creep_Q33_tau.pdf} }
\end{tabular}
\end{center}
\caption{Creep test: the effects of the relaxation time. The strain is given by the parameters $m=1.2$ and $n=1.4$.}
\label{fig:creep_tau}
\end{figure}

\begin{figure}
\begin{center}
\begin{tabular}{cc}
\includegraphics[angle=0, trim=100 215 100 100, clip=true, scale = 0.18]{./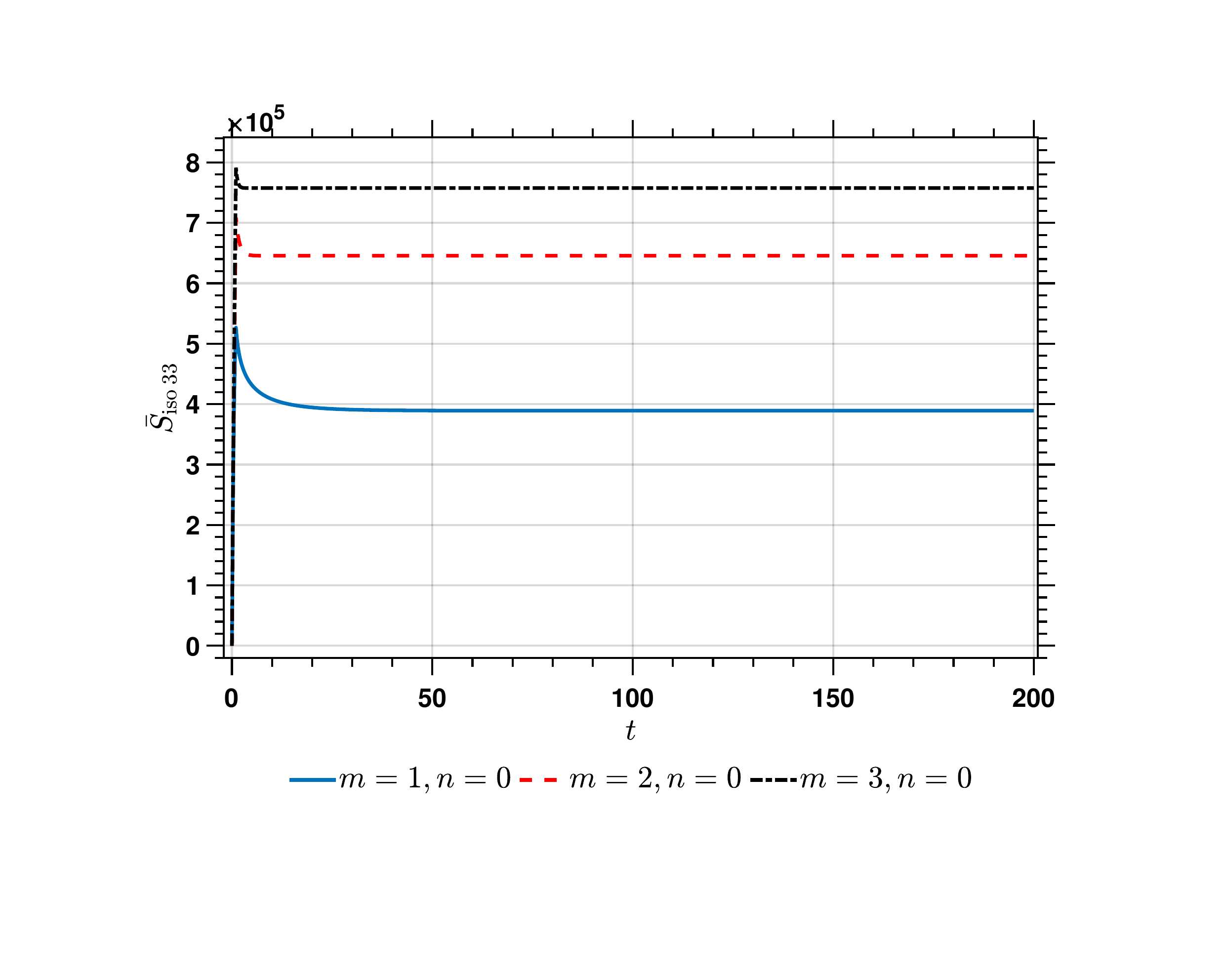} &
\includegraphics[angle=0, trim=100 215 100 100, clip=true, scale = 0.18]{./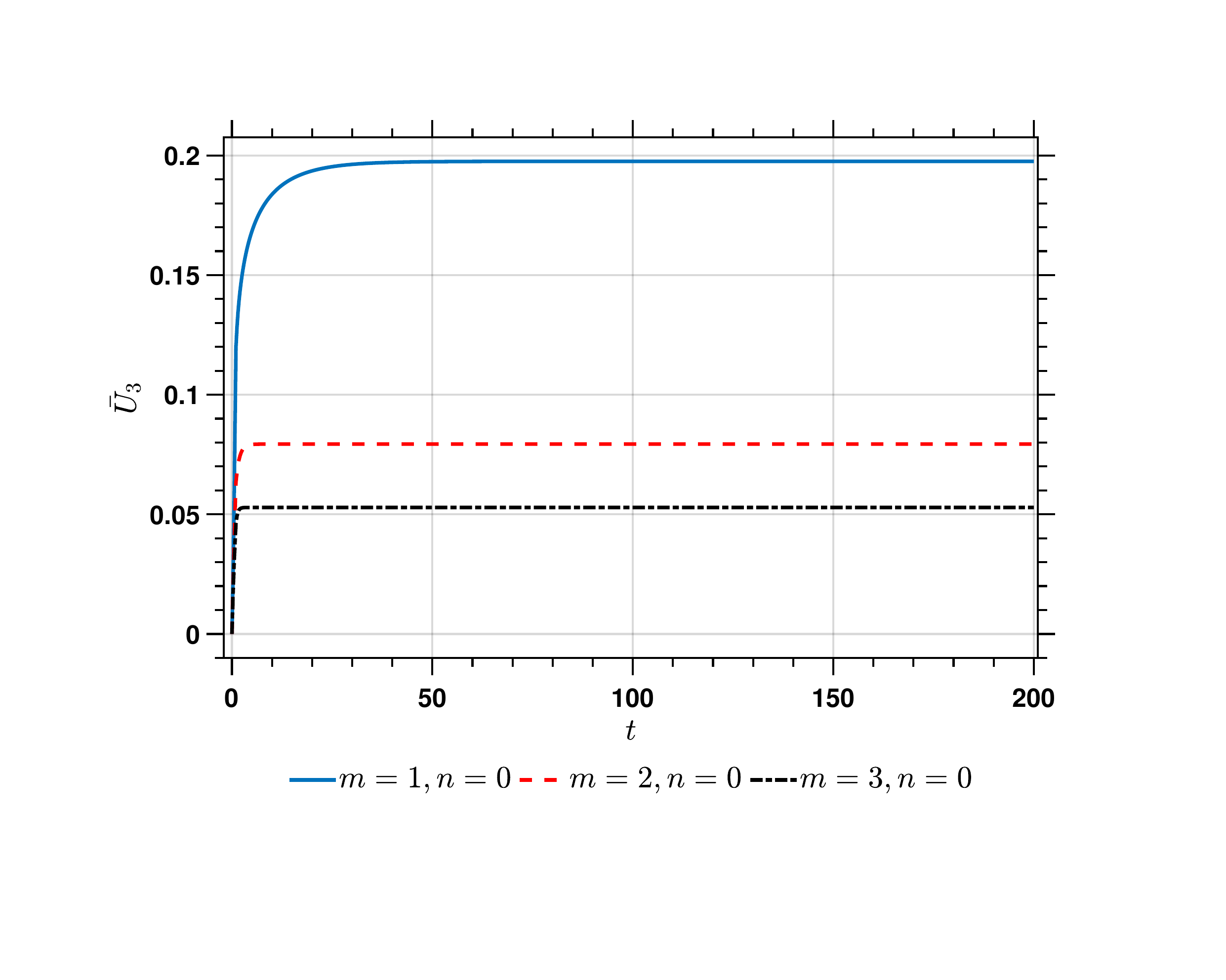} \\
\includegraphics[angle=0, trim=100 215 100 100, clip=true, scale = 0.18]{./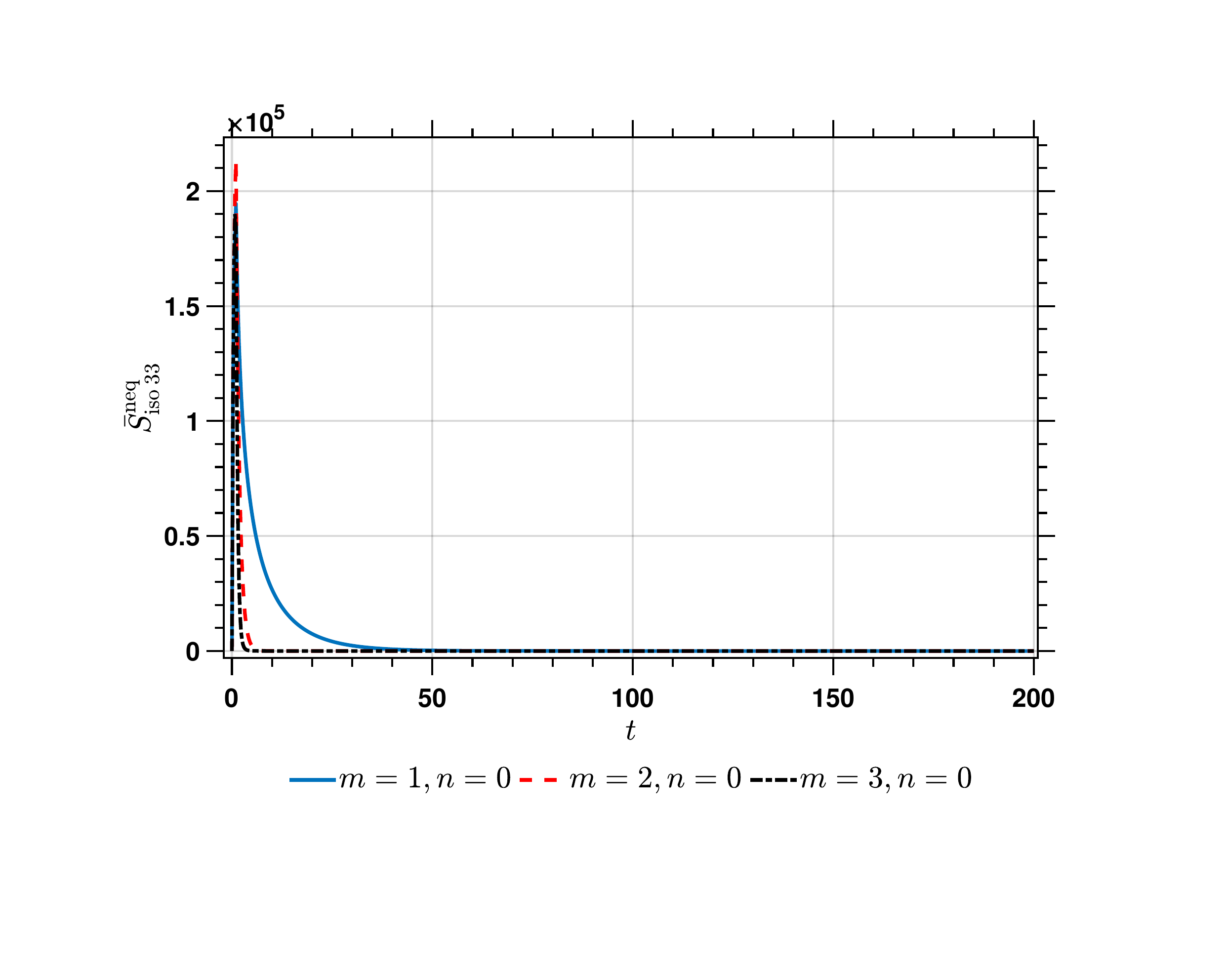} &
\includegraphics[angle=0, trim=100 215 100 100, clip=true, scale = 0.18]{./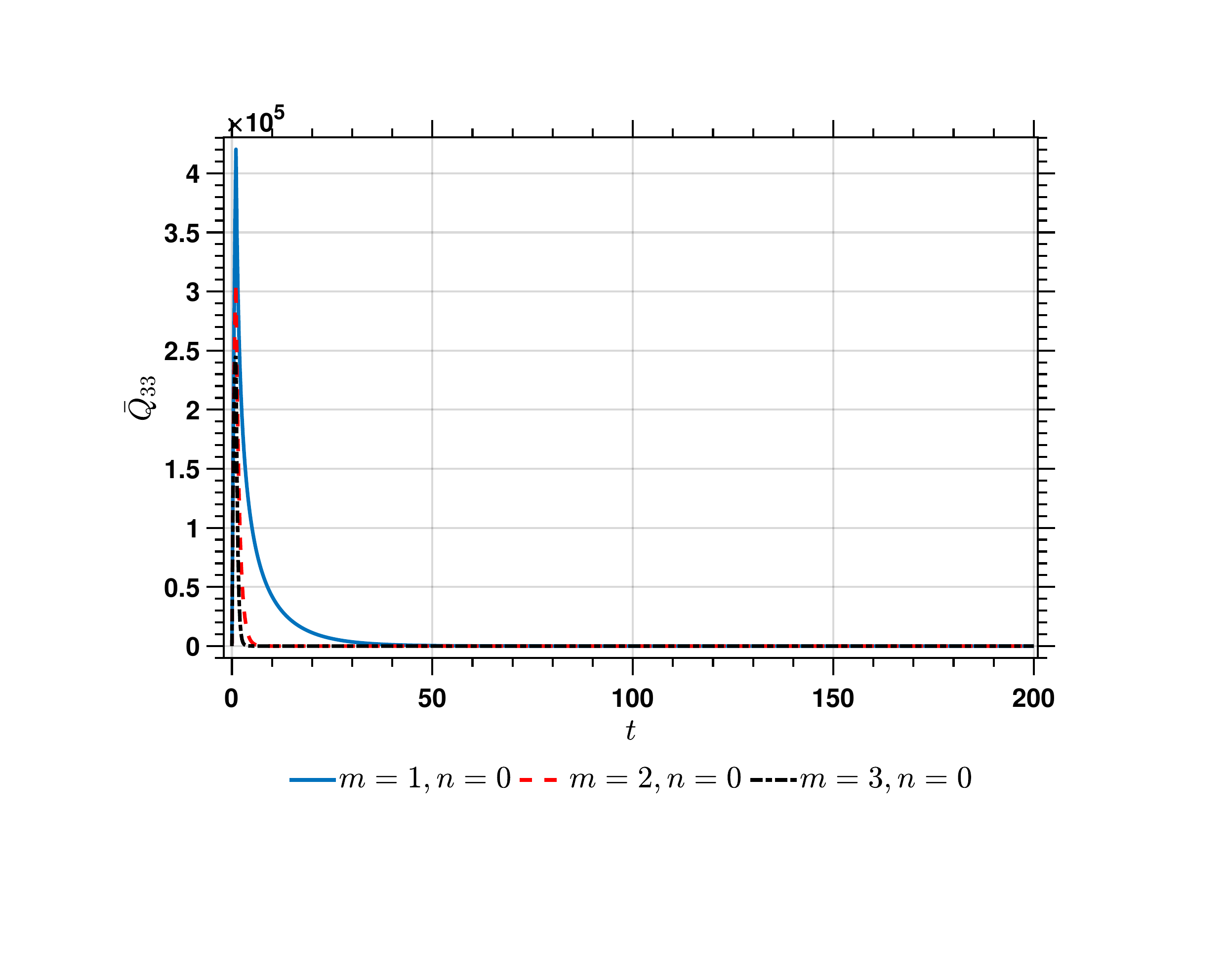} \\
\multicolumn{2}{c}{ \includegraphics[angle=0, trim=150 180 150 720, clip=true, scale = 0.25]{./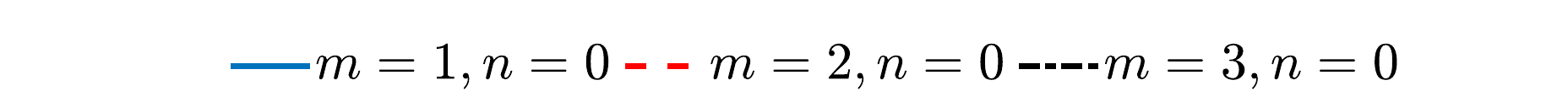} }
\end{tabular}
\end{center}
\caption{Creep test: the effects of the different strains. The relaxation time is $0.5$.}
\label{fig:creep_paras}
\end{figure}

We investigate the creep behavior with the problem setting summarized in Table \ref{table:creep_test}. The strains $\tilde{\bm E}^{\infty}_1$, $\tilde{\bm E}^1$, and $\bm E^{\mathrm{v} \: 1}$ are defined by the same two parameters $m$ and $n$, which will be specified momentarily. The problem is integrated up to $T=200$. On the bottom boundary, the tangential components of the traction are zero (i.e., $H_1 = H_2 = 0$), and the normal displacement is fixed (i.e., $G_3 = 0$). On the top boundary $\Gamma_{\mathrm{top}}$, the traction of the form $\bm{H} = \left[ 0, 0, H_3(t) \right]^T$ is imposed with 
\begin{align*}
H_3(t) = 
\begin{cases}
1.737 \times 10^6 t, & t \leq 1.0,\\
1.737 \times 10^6, & 1.0 < t \leq T.
\end{cases}
\end{align*}
On the rest boundary surfaces, stress-free boundary conditions are applied. In this example, we monitor the following four averaged quantities on the top surface,
\begin{align*} 
\bar{S}_{\mathrm{iso}\:33} := \frac{\int_{\Gamma_{\mathrm{top}}} S_{\mathrm{iso}\:33} d{\Gamma}_X}{\int_{\Gamma_{\mathrm{top}}} d{\Gamma}_X}, \quad \bar{U}_{3} := \frac{\int_{\Gamma_{\mathrm{top}}} U_3 d{\Gamma}_X}{\int_{\Gamma_{\mathrm{top}}} d{\Gamma}_X}, \quad \bar{S}^{\mathrm{neq}}_{\mathrm{iso}\:33} := \frac{\int_{\Gamma_{\mathrm{top}}} S^{\mathrm{neq}}_{\mathrm{iso}\:33} d{\Gamma}_X}{\int_{\Gamma_{\mathrm{top}}} d{\Gamma}_X}, \quad \bar{Q}_{33} := \frac{\int_{\Gamma_{\mathrm{top}}} Q_{33} d{\Gamma}_X}{\int_{\Gamma_{\mathrm{top}}} d{\Gamma}_X}.
\end{align*}
Before our investigation, we have adopted two meshes to guarantee the mesh independence of the reported results. In the following, the results are obtained based on the time step size $\Delta t_n = 0.05$ and a mesh of $5\times5\times5$ elements with the polynomial degree $\mathsf{p}=2$. In Figure \ref{fig:creep_tau}, we analyze the model behavior with three different relaxation times. It can be observed that the averaged stress $\bar{\bm S}_{\mathrm{iso} \: 33}$ and the averaged displacement $\bar{U}_3$ approach their values at the equilibrium state, for the cases with smaller values of $\tau^1$. The corresponding values of $\bar{Q}_{33}$ and $\bar{S}^{\mathrm{neq}}_{\mathrm{iso}\:33}$ both tend towards zero, confirming the relaxation property. It is also apparent that, with a larger value of the relaxation time, the model requires a longer period of time to achieve the equilibrium state. In Figure \ref{fig:creep_paras}, we examine the influence of the generalized strains by considering difference choices of $m$ and $n$ in the scale function of the Curnier-Rakotomanana family. In particular, the case of $m=2$ and $n=0$ is considered, which corresponds to the finite deformation linear model \cite{Liu2021b}. It is observed that different strains have a significant impact on the values of $\bar{\bm S}_{\mathrm{iso} \: 33}$ and $\bar{U}_{3}$ at the equilibrium state. In the meantime, the averaged stresses $\bar{S}^{\mathrm{neq}}_{\mathrm{iso}\:33}$ and $\bar{Q}_{33}$ vanish with different rates when the strains are different. 

\subsection{Shear test}
\begin{table}[htbp]
  \centering 
  \begin{tabular}{ m{.45\textwidth} m{.45\textwidth} }
    \hline \\
    \begin{minipage}{.45\textwidth}
    \centering
      \includegraphics[width=1.15\linewidth, trim=120 290 90 270, clip]{./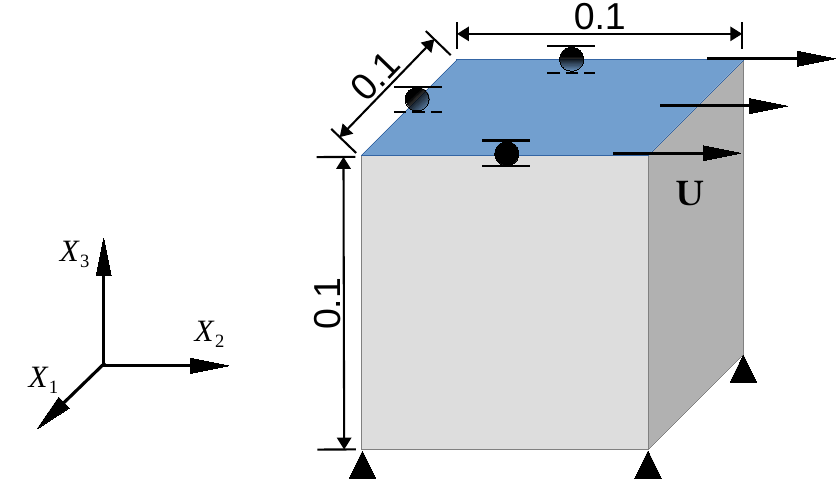}
    \end{minipage}
    &
    \begin{minipage}{.45\textwidth}
      \begin{itemize}
        \item[] Material properties:
        \item[] $\rho_0 = 1.0\times 10^3,\quad N = 2, \quad M = 1$,
        \item[] $G_{\mathrm{iso}}^{\infty} = \mu_1^{\infty} \left| \tilde{\bm{E}}^{\infty}_1 \right|^2+\mu_2^{\infty} \left| \tilde{\bm{E}}^{\infty}_2 \right|^2$, 
        \item[] $\mu_1^{\infty} = 1.75 \times 10^5, \quad \mu_2^{\infty} = 0.35 \times 10^5$,
        \item[] $\Upsilon^1 =  \mu^1 \left| \tilde{\bm{E}}^1 - \bm{E}^{\mathrm{v}\:1}\right|^2$,
        \item[] $\tau^1 = 17.5$, $\mu^1 = 5.36 \times 10^5$.
      \end{itemize}
    \end{minipage} \\ \\
    \hline
  \end{tabular}
    \caption{Shear test: problem setting}
\label{table:shear_test}
\end{table}

\begin{figure}
	\begin{center}
		\begin{tabular}{cc}
			\includegraphics[angle=0, trim=80 225 100 100, clip=true, scale = 0.2]{./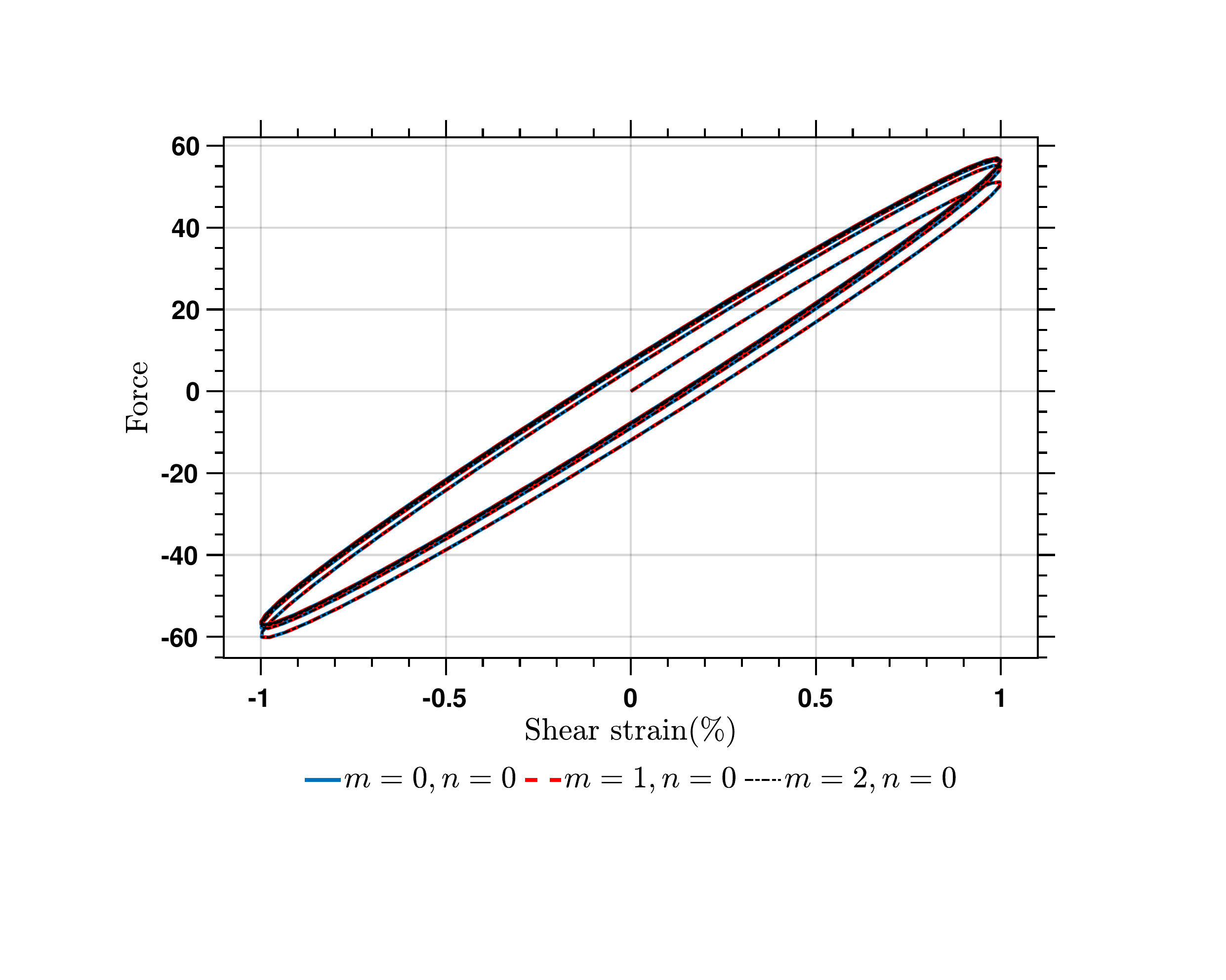} & 
			\includegraphics[angle=0, trim=80 225 100 100, clip=true, scale = 0.2]{./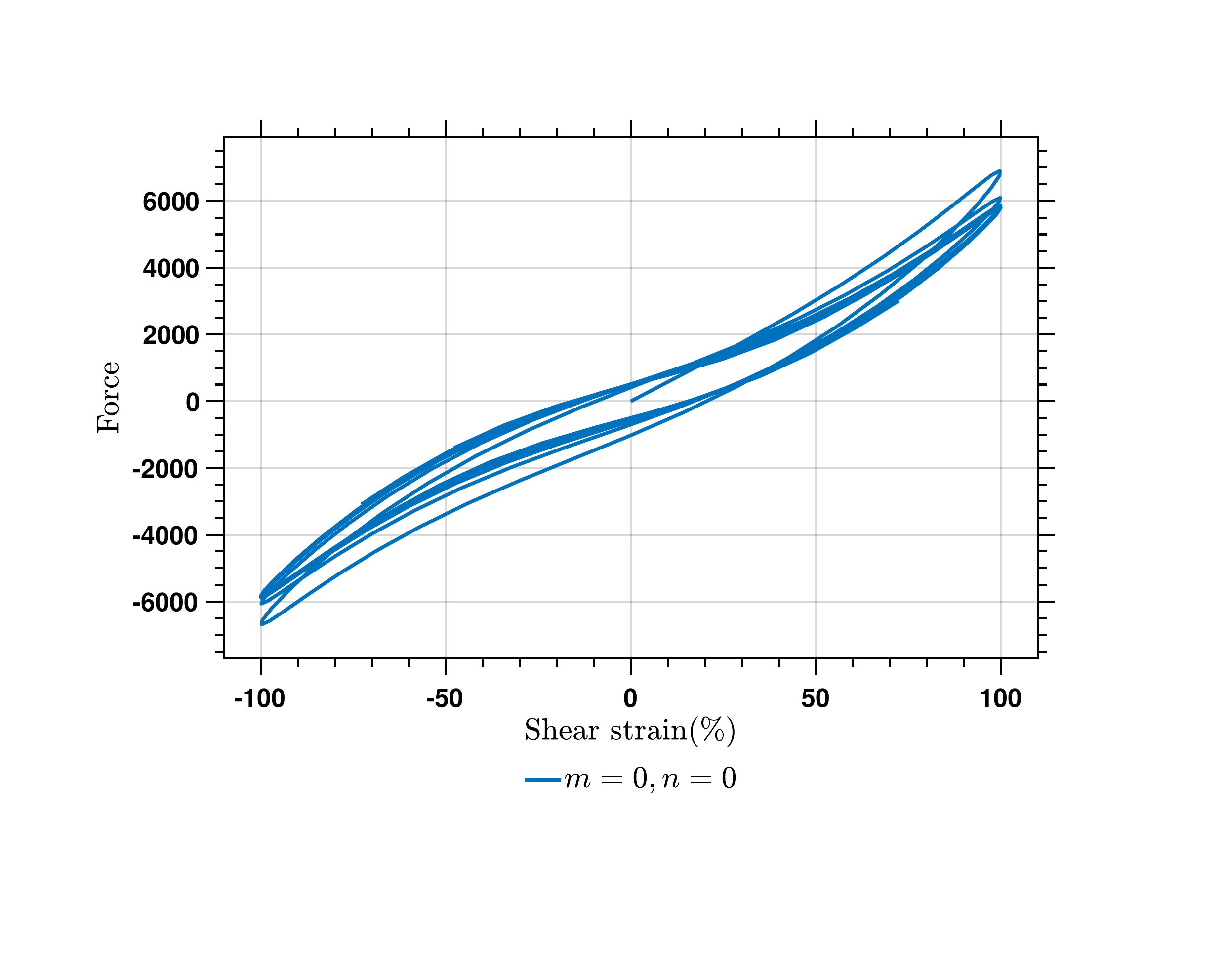} \\
			(a) & (b) \\
			\includegraphics[angle=0, trim=80 225 100 100, clip=true, scale = 0.2]{./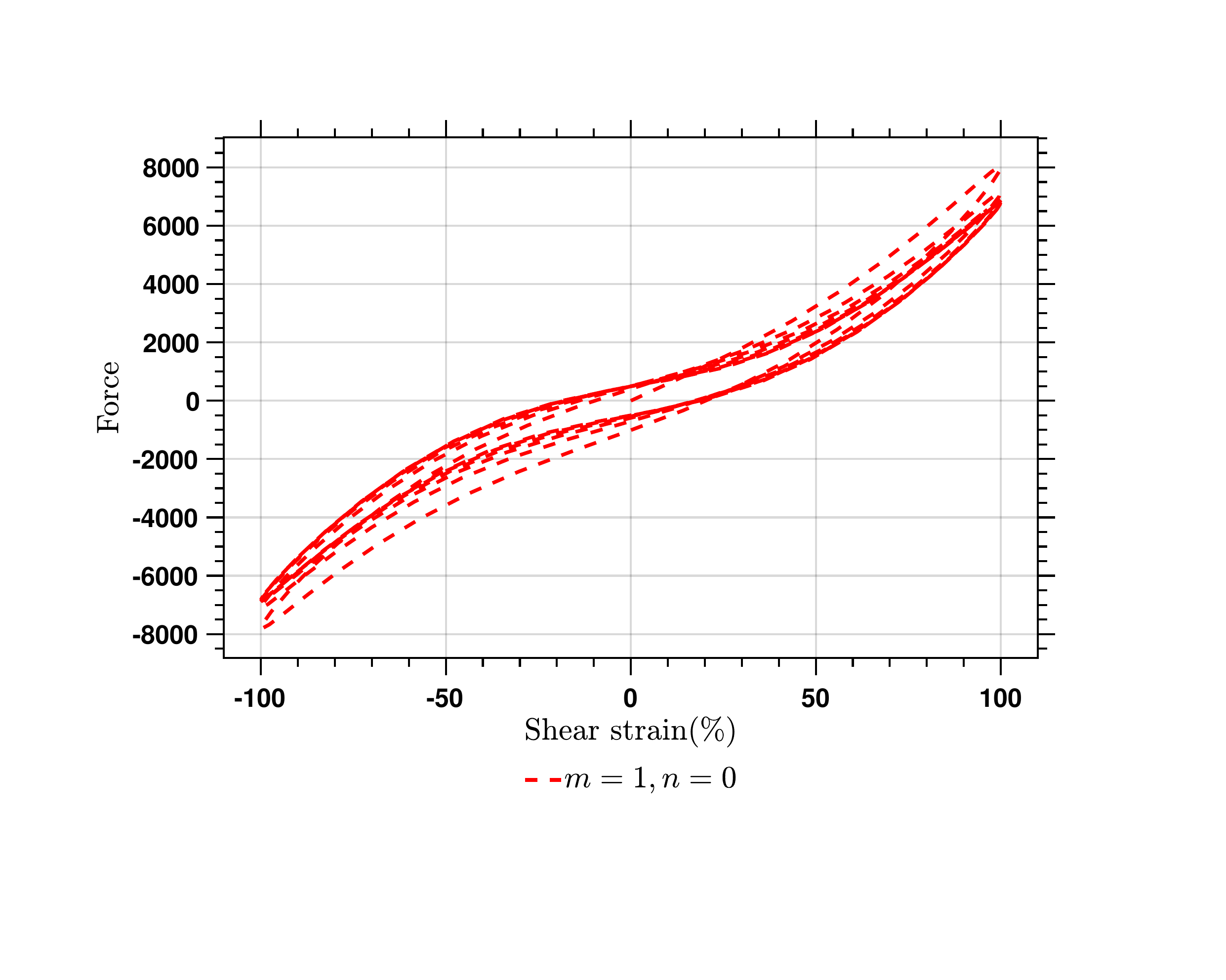} & 
			\includegraphics[angle=0, trim=80 225 100 100, clip=true, scale = 0.2]{./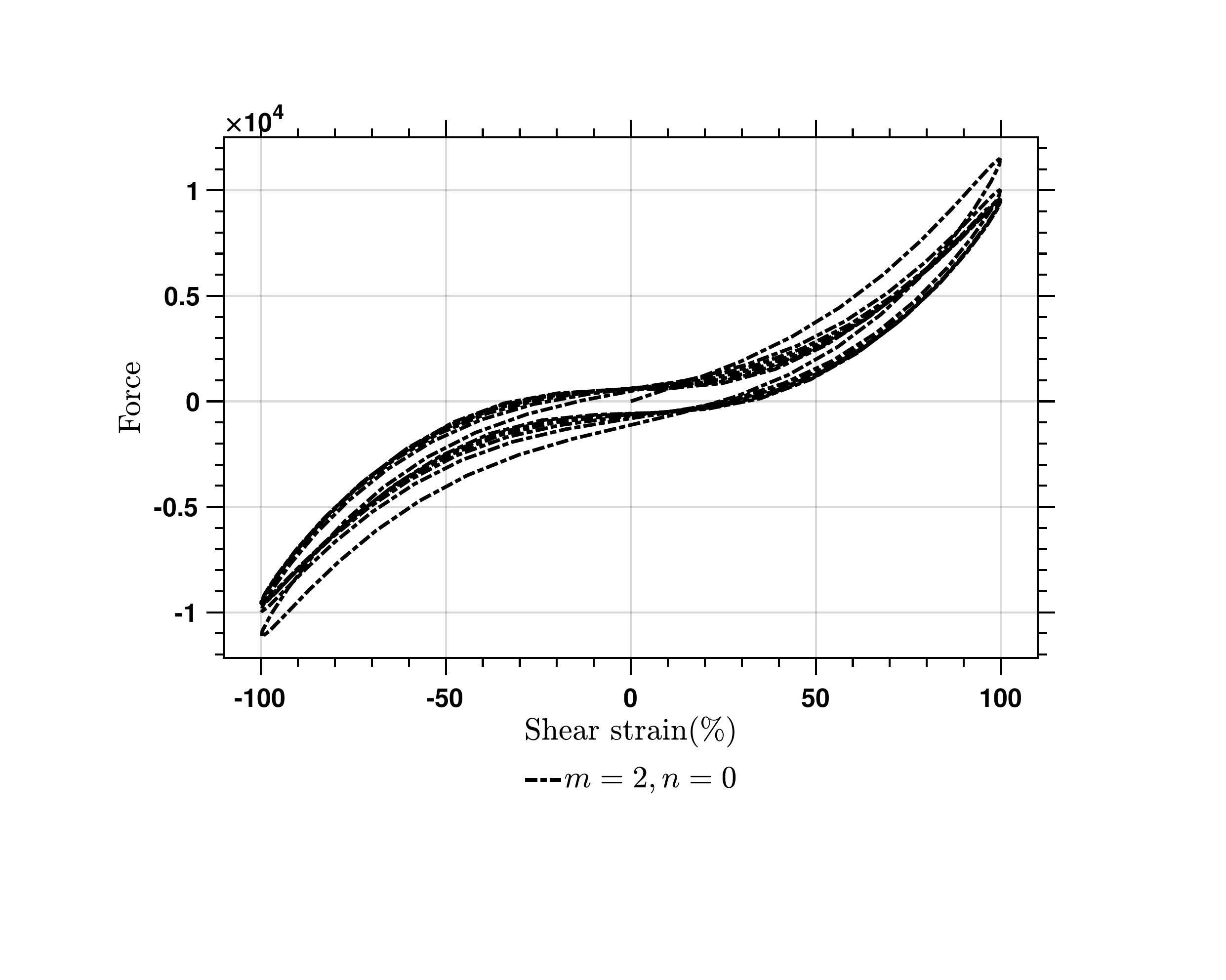} \\
			(c) & (d) \\
			\multicolumn{2}{c}{ \includegraphics[angle=0, trim=250 180 250 730, clip=true, scale = 0.22]{./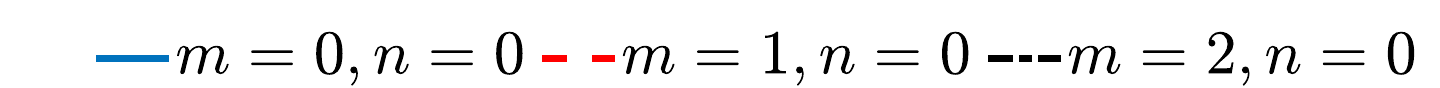} }
		\end{tabular}
	\end{center}
	\caption{Shear test: the hysteresis loops of different strains. The case with $U_0/L_0=1\%$ is illustrated in (a) for all three considered strains.}
	\label{fig:shear_paras}
\end{figure}

In this example, we investigate the model behavior under cyclic loads, and the problem setting is outlined in Table \ref{table:shear_test}. A set of two generalized strains are invoked to define the equilibrium part of the energy $G^{\infty}_{\mathrm{iso}}$, and their parameters are $m_1=2.0$, $n_1=0.0$ and $m_2=n_2=0.0$, respectively. A single set of parameter is utilized to define the two strains in the configurational free energy $\Upsilon^1$ and are denoted by $m$ and $n$ in this example. The block is clamped on the bottom and is subjected to a sinusoidal displacement loading $G_2(t) = U_0\sin(\omega t)$ along the $X_2$-direction on the top surface. The frequency is fixed to be $\omega = 0.3$. Stress-free boundary conditions are applied on the rest boundary surfaces. The shear strain on the top surface is given by $G_2(t)/L_0$ here, with $L_0 = 0.1$ being the edge length of the cube. We consider two cases with the maximum shear strains $U_0/L_0$ equaling to $1\%$ and $100\%$, repectively. Six cycles of loading are performed in each simulation to obtain the dynamic equilibrium state \cite[p.~3472]{Reese1998}. The time step size $\Delta t_n$ is fixed to be $0.1$. The spatial discretization is identical to that of the previous example.

We monitor the total traction on the top surface and plot the value of its component in the $X_2$-direction against the shear strain $G_2(t)/L_0$ in Figure \ref{fig:shear_paras}. The results illustrate the hysteresis loops corresponding to the two different loading parameters of $U_0/L_0$. In particular, we considered the strains $\tilde{\bm E}^1$ and $\bm E^{\mathrm v \: 1}$ parameterized by $(m,n)=(0,0)$, $(1,0)$, and $(2,0)$. When $U_0/L_0 = 100\%$, it is evident that the peak value of the force is proportional to the value of $m$. In contrast, when the shear is small (i.e., $U_0/L_0 = 1\%$), the influence of the parameter $m$ on the force is negligible. We recall that the parameter $m$ enters into the stress-strain relation as the power of the stretches, which may explain the aforesaid phenomena. Also, with the increase of the maximum shear strain, the strain rate gets stronger, and this drives the material to behave closer to a hyperelastic material. This observation matches with previous studies on the cyclic loading test of a viscoelastic body \cite{Reese1998}.

\subsection{Bearing}
\begin{table}[htbp]
  \centering 
  \begin{tabular}{cc}
    \hline \\
    \begin{minipage}{.45\textwidth}
      \begin{itemize}
        \item[] Rubber layers:
        \item[] $\rho_0 = 1.0\times 10^3$,
        \item[] $N = 2$, $M = 2$,
        \item[] $G_{\mathrm{iso}}^{\infty} = \mu_1^{\infty} \left| \tilde{\bm{E}}^{\infty}_1 \right|^2+\mu_2^{\infty} \left| \tilde{\bm{E}}^{\infty}_2 \right|^2$, 
        \item[] $\mu_1^{\infty} = 1.75 \times 10^5, \quad \mu_2^{\infty} = 0.35 \times 10^5$,
        \item[] $\Upsilon^1 =  \mu^1 \left| \tilde{\bm{E}}^1 - \bm{E}^{\mathrm{v}\:1}\right|^2$,
        \item[] $\Upsilon^2 =  \mu^2 \left| \tilde{\bm{E}}^2 - \bm{E}^{\mathrm{v}\:2}\right|^2$,
        \item[] $\tau^1 = 17.5, \quad \tau^2 = 17.5$,
        \item[] $\mu^1 = 0.536 \times 10^4, \quad \mu^2 = 5.3064 \times 10^5$.
      \end{itemize}
    \end{minipage}
    &
    \begin{minipage}{.45\textwidth}
      \begin{itemize}
        \item[] Steel layers:
        \item[] $G^{\mathrm{st}} = \mu^{\mathrm{st}} \left| \tilde{\bm{E}}^{\mathrm{st}} \right|^2$,
        \item[] $\mu^{\mathrm{st}}=  8.077 \times 10^{10}$,
        \item[] $m^{\mathrm{st}} = 2.0, n^{\mathrm{st}} = 2.0$.
      \end{itemize}
      \includegraphics[width=1.0\linewidth, trim=60 230 40 270, clip]{./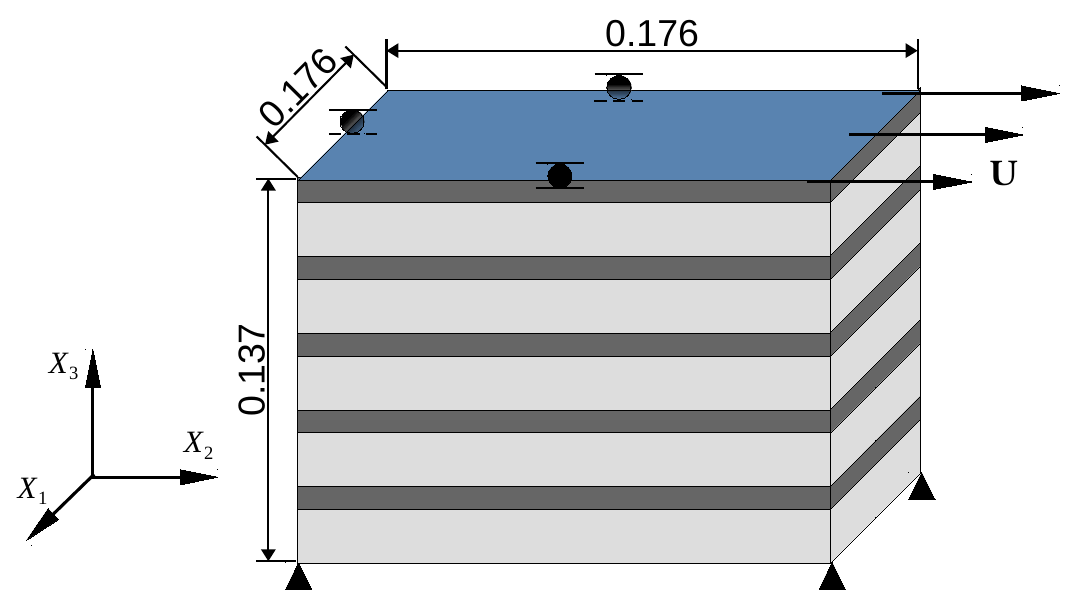}
    \end{minipage} \\ \\
    \hline
  \end{tabular}
  \caption{Bearing test: problem setting. The dark color represents the steel layer with height being 0.003; the light color represents the rubber layer with height being 0.025.}
\label{table:bearing test}
\end{table}

\begin{figure}
\begin{center}
\begin{tabular}{cc}
\multicolumn{2}{c}{ \includegraphics[angle=0, trim=170 180 160 950, clip=true, scale = 0.24]{./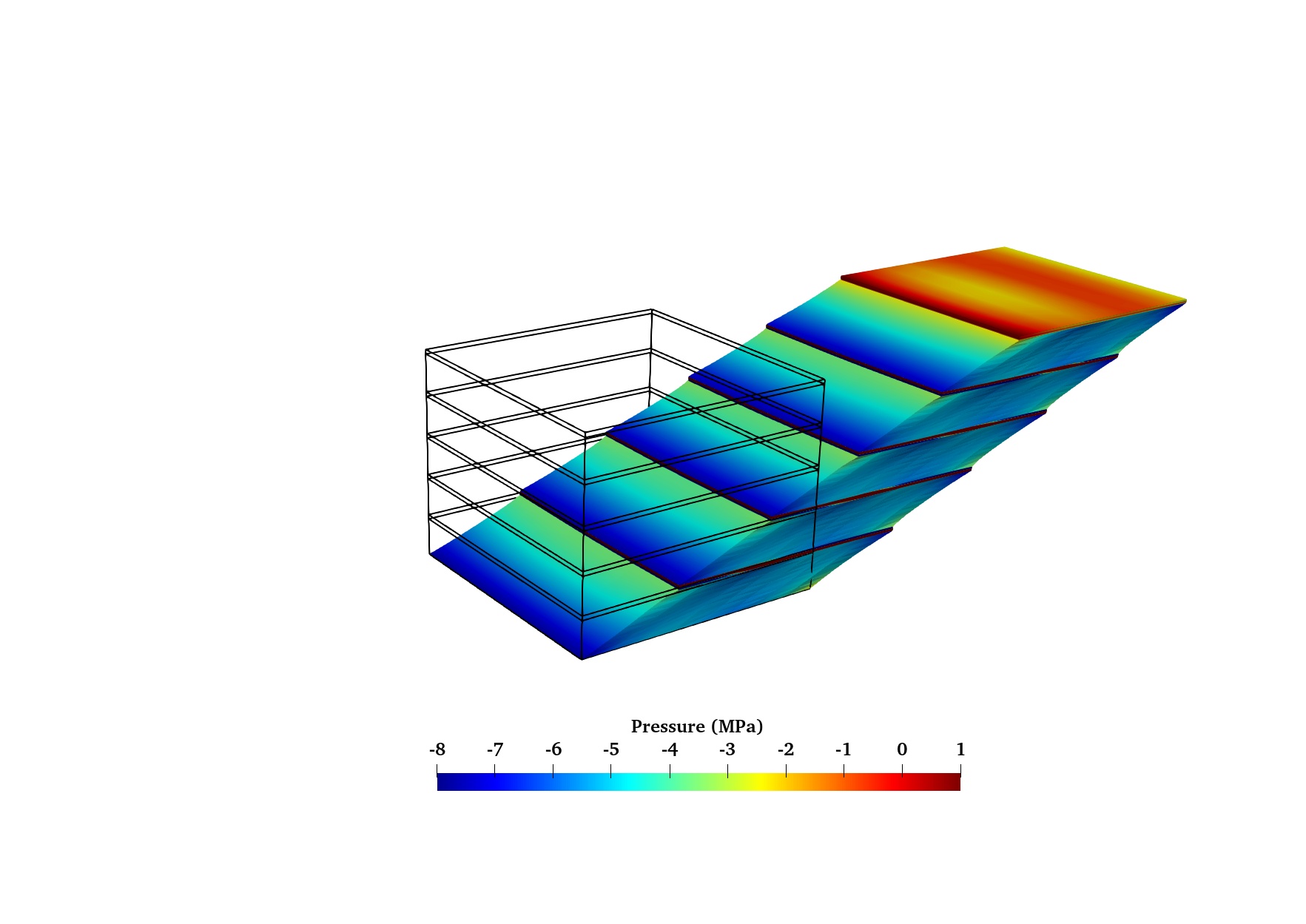} } \\
\includegraphics[angle=0, trim=550 350 150 320, clip=true, scale = 0.20]{./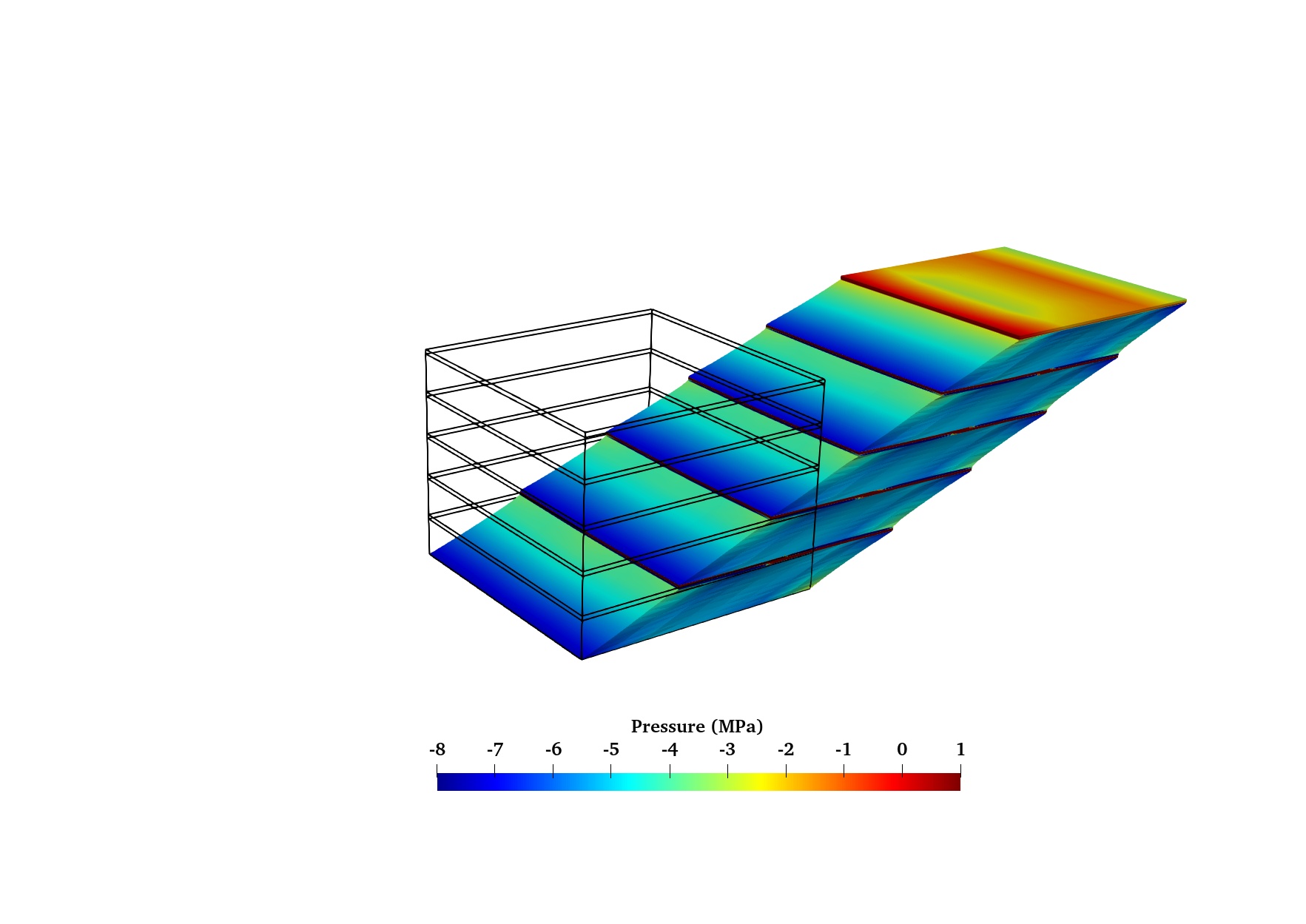} &
\includegraphics[angle=0, trim=550 350 150 320, clip=true, scale = 0.20]{./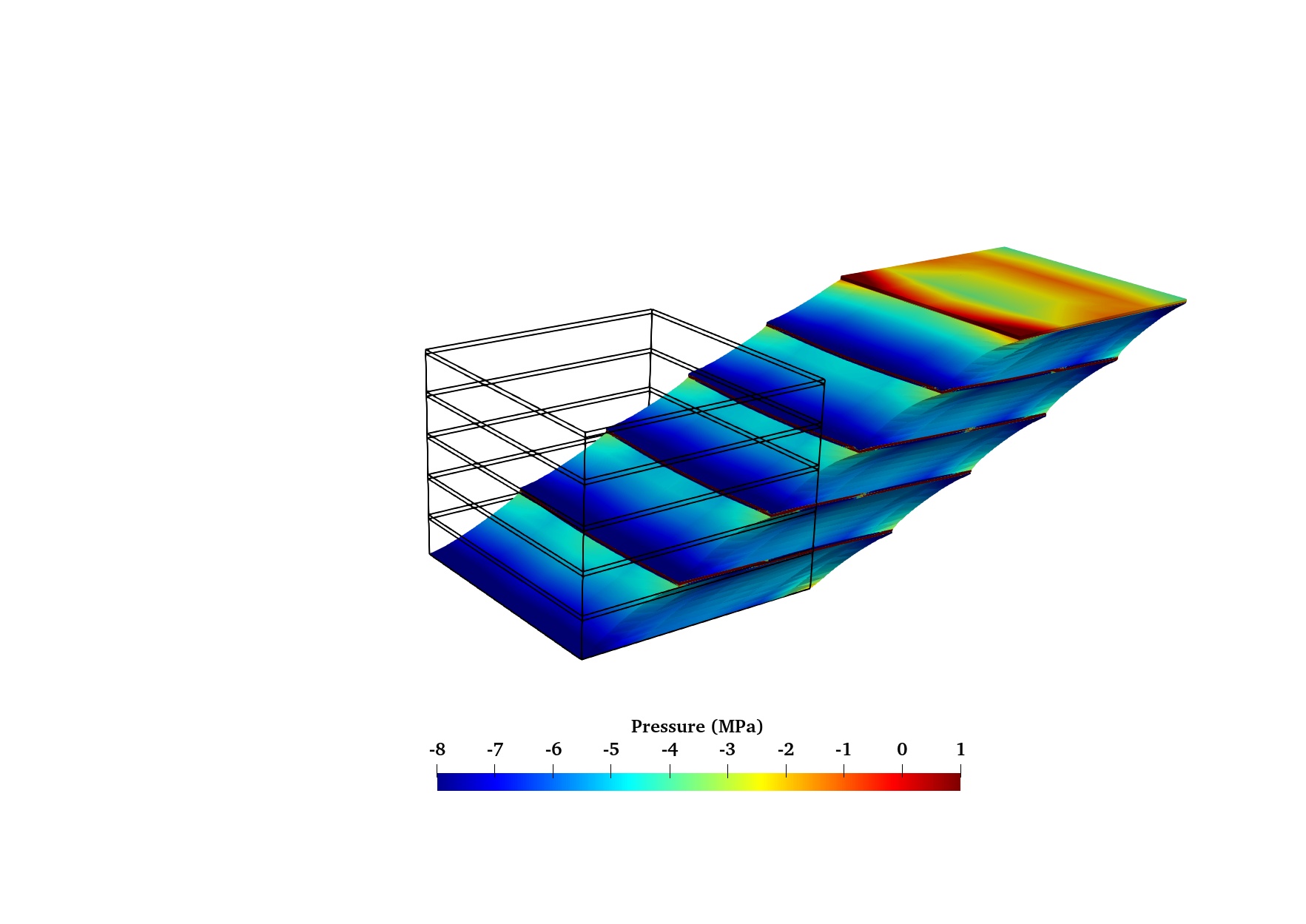} \\
$\omega = 0.03$ & $\omega = 3.0$
\end{tabular}
\end{center}
\caption{Bearing test: the pressure distributions of the bearing at the maximum shear state with two strain rates.}
\label{fig:bearing_deformation}
\end{figure}

In the final example, we explore the shearing behavior of a multilayered rubber-steel structure, which is used for seismic isolation. The problem setting is summarized in Table \ref{table:bearing test}. The steel layers are modeled by the generalized Hill's hyperelasticity model based on $\tilde{\bm E}^\mathrm{st}$, whose parameters are $m^\mathrm{st}=2.0$ and $n^\mathrm{st}=2.0$. As for the rubber layers, both the equilibrium and non-equilibrium parts are characterized by two quadratic energy terms. The generalized strains $\tilde{\bm E}^{\infty}_{1}$, $\tilde{\bm E}^{1}$, and $\bm E^{\mathrm v \: 1}$ are defined by the same scale function with parameters $m_1 = 2.0$ and $n_1 = 0.0$, while the generalized strains $\tilde{\bm E}^{\infty}_{2}$, $\tilde{\bm E}^{2}$, and $\bm E^{\mathrm v \: 2}$ are defined by the same scale function with parameters $m_2 = 0.0$ and $n_2 = 0.0$.

The bearing is fixed on the bottom surface and is subjected to a sinusoidal displacement loading along the $X_2$-direction on the top surface with $G_2(t) = 0.35 \mathrm{sin}(\omega t)$. Stress-free boundary conditions are applied on the rest boundary surfaces. The geometry of the bearing is represented by multi-patch NURBS with each layer constructed by a single patch. The presented results are based on a spatial mesh of $3 \times 3 \times 3$ elements for each patch with $\mathsf{p} =2$. The results have been verified by a mesh independence study. The frequencies $\omega$ under investigation take the values of $0.03$, $0.3$, and $3.0$, with the corresponding time step sizes of $0.1$, $0.01$, and $0.0001$, respectively. The simulations are conducted for two cycles for each simulation. 

Figure \ref{fig:bearing_deformation} depicts the pressure distributions when the multilayered structure reaches the maximum strain. As is shown in the figure, the higher values of $\omega$ results in a more intense pressure distribution. In Figure \ref{fig:bearing_force_displacement}, we illustrate the total force against the displacement in the $X_2$-direction on the top boundary surface. The curves obtained by three distinct meshes are plotted, demonstrating mesh independence. We observe that the increase of the frequency $\omega$ leads to a slight increase of the peak value of the hysteresis curve, consistent with the result of Figure \ref{fig:bearing_deformation}. Higher frequencies also cause the material to behave closer to hyperelasticity. These observations, again, align well with the prior studies on viscoelastic hysteresis \cite{Liu2021b,Reese1998}.

\begin{figure}
\begin{center}
\begin{tabular}{ccc}
\multicolumn{3}{c}{ \includegraphics[angle=0, trim=100 170 100 735, clip=true, scale = 0.22]{./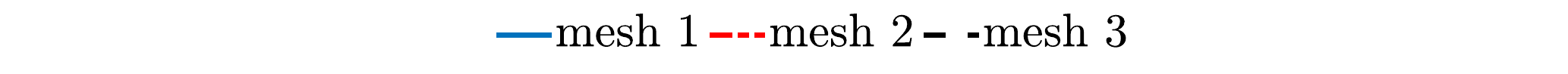} } \\
\includegraphics[angle=0, trim=100 210 120 100, clip=true, scale = 0.15]{./bearing_omega_0d03.pdf} &
\includegraphics[angle=0, trim=100 210 120 100, clip=true, scale = 0.15]{./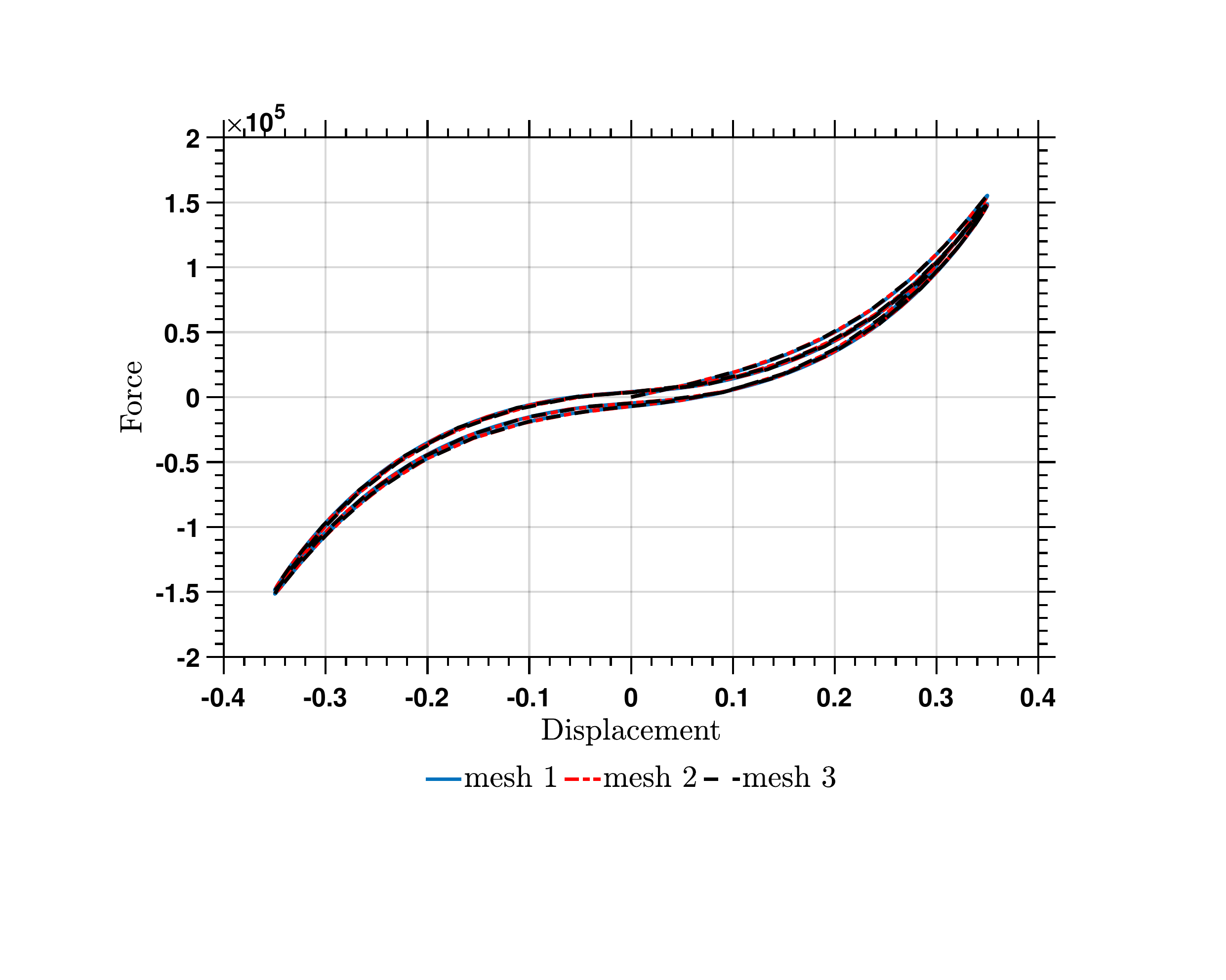} &
\includegraphics[angle=0, trim=100 210 120 100, clip=true, scale = 0.15]{./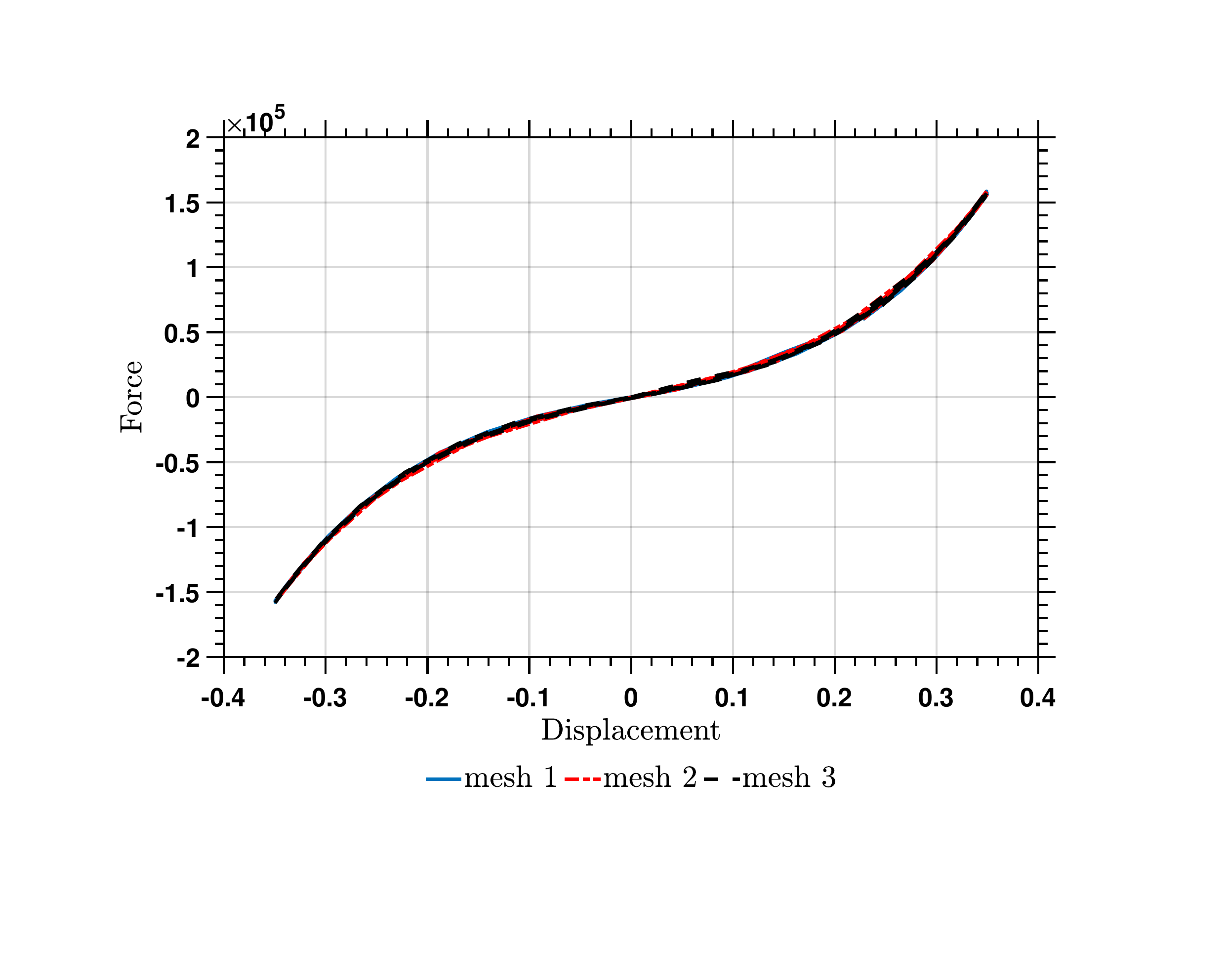} \\
$\omega = 0.03$ & $\omega=0.3$ & $\omega=3.0$
\end{tabular}
\end{center}
\caption{Bearing test: the force-displacement curve for different strain rates.}
\label{fig:bearing_force_displacement}
\end{figure}

\section{Conclusion}
\label{sec:conclusion}
In this work, we present a viscoelasticity model that extends the finite deformation linear viscoelastic model \cite{Simo1987,Holzapfel1996,Liu2021b} to the nonlinear regime. Our approach stems from an examination of the relaxation property of the non-equilibrium stress. It is found to be intricately linked to the kinematic assumption akin to the Green-Naghdi type developed in elastoplasticity. This inspires a set of kinematic assumptions tailored for viscoelasticity, from which a nonlinear theory is systematically constructed. To maintain consistency with the existing models, we adopt the hyperelasticity of Hill's class into our theory. Additionally, we constructed the theory based on the Helmholtz and Gibbs free energies, which are applicable to the pure displacement and mixed formulations, respectively. Within this framework, the nonlinear extension is achieved through generalized strains, and the previous linear model \cite{Liu2021b} gets recovered when the strain is instantiated to be of the Green-Lagrange type. This modeling framework differs from the theory based on the intermediate configuration in a fundamental manner. Since the internal state variable is a Lagrangian tensor belonging to $\mathrm{Sym}(3)_+$, there is no necessity to make assumptions regarding the rotation of the intermediate configuration. This feature can be particularly beneficial in treating anisotropy as one does not need to model the evolution of the symmetry groups. The benefit of the Green-Naghdi assumption is also manifested in guaranteeing the satisfaction of the relaxation property of the non-equilibrium stress. This is a rigorously established property for models built upon this assumption.

The numerical implementation of the model can be achieved in a modular approach, encompassing the geometric preprocessing, constitutive evaluation, and geometric postprocessing. Due to the nonlinearity, the evolution equation is integrated by the midpoint rule, and the internal state variables are determined locally at each quadrature point through a local Newton-Raphson iteration. Numerical examples, including creep, relaxation, and cyclic shear, are provided to illustrate the effectiveness of the model. We also considered a bearing example to showcase the practical applicability of the proposed model.

Our future work will focus on developing a nonlinear anisotropic viscoelasticity model. It is also imperative to consider integration methods that preserve critical physical features. This will be useful for the evaluating the long-term performance of viscoelastic devices and providing insights into their nonlinear behaviors.

\section*{Acknowledgements}
This work is supported by the National Natural Science Foundation of China [Grant Numbers 12172160], Shenzhen Science and Technology Program [Grant Number JCYJ20220818100600002], Southern University of Science and Technology [Grant Number Y01326127], and the Department of Science and Technology of Guangdong Province [2021QN020642]. Computational resources are provided by the Center for Computational Science and Engineering at the Southern University of Science and Technology.

\appendix
\section{Derivation of $\mathbb{Q}$}
\label{ap:Q}
We start by deriving an explicit formula for the derivative of $\bm{M}_a$ with respect to $\bm{C}$. According to \eqref{eq:C_spectral}, the eigenvalue problem for $\bm C$ reads as
\begin{align}
\label{eq:CN}
\bm{C} \bm{N}_a = \lambda_{a}^2\bm{N}_a   .
\end{align}
Taking derivative with respect to $\bm C$ at both sides of \eqref{eq:CN} leads to
\begin{align*}
\frac{\partial \left( \bm{C}\bm{N}_a \right) }{\partial \bm{C}} = \frac{\partial (\lambda_a^2 \bm{N}_a)}{\partial \bm{C}}, \displaybreak[2]
\end{align*}
which can be expanded as
\begin{align*}
 \bm{N}_a \cdot \frac{\partial \bm{C}^T}{\partial \bm{C}} + \bm{C} \frac{\partial \bm{N}_a}{\partial \bm{C}} = \bm{N}_a \otimes \frac{\partial \lambda_a^2}{\partial \bm{C}}  + \lambda_a^2 \frac{\partial \bm{N}_a}{\partial \bm{C}}.
\end{align*}
Performing the dot product of the above with the eigenvector $\bm N_b$, with $b \neq a$, results in
\begin{align}
\label{eq:N_CN_C}
\left( \bm{N}_b \otimes \bm{N}_a \right): \frac{\partial \bm{C}}{\partial \bm{C}} + \bm{N}_b \cdot \bm{C} \frac{\partial \bm{N}_a}{\partial \bm{C}} = \left( \bm{N}_b \cdot \bm{N}_a \right) \frac{\partial \lambda_a^2}{\partial \bm{C}}  + \lambda_a^2  \bm{N}_b \cdot \frac{\partial \bm{N}_a}{\partial \bm{C}} =  \lambda_a^2  \bm{N}_b \cdot \frac{\partial \bm{N}_a}{\partial \bm{C}}.
\end{align}
The second term on the left-hand side can be rewritten as
\begin{align}
  \label{eq:symm_C}
  \bm{N}_b \cdot \bm{C} \frac{\partial \bm{N}_a}{\partial \bm{C}}= \left( \bm{C}^T\bm{N}_b \right) \cdot \frac{\partial \bm{N}_a}{\partial \bm{C}} = \lambda_b^2 \bm{N}_b \cdot \frac{\partial \bm{N}_a}{\partial \bm{C}},
\end{align}
due to the symmetry of $\bm C$. With that, the relation \eqref{eq:N_CN_C} can be reorganized as
\begin{align}
  \label{eq:lambda_N_C_N}
\left(\lambda_a^2 -\lambda_b^2 \right) \bm{N}_b \cdot \frac{\partial \bm{N}_a}{\partial \bm{C}} = \left( \bm{N}_b \otimes \bm{N}_a \right): \frac{\partial \bm{C}}{\partial \bm{C}} = \frac{1}{2} \left( \bm{N}_a \otimes \bm{N}_b + \bm{N}_b \otimes\bm{N}_a \right),
\end{align}
which further leads to
\begin{align*}
  \bm{N}_b \cdot \frac{\partial \bm{N}_a}{\partial \bm{C}} = \frac{1}{2\left(\lambda_a^2 -\lambda_b^2 \right)} \left( \bm{N}_a \otimes \bm{N}_b + \bm{N}_b \otimes\bm{N}_a \right).
\end{align*}
From the above, the derivative of the eigenvector $\bm{N}_a$ with respect to $\bm{C}$ can be represented as 
\begin{align}
\label{eq:N_C}
\frac{\partial \bm{N}_a}{\partial \bm{C}} =\frac{1}{2}\sum_{b \neq a}^3 \frac{1}{\lambda_a^2 - \lambda_b^2} \bm{N}_b \otimes \left( \bm{N}_b \otimes \bm{N}_a + \bm{N}_a \otimes \bm{N}_b \right) , 
\end{align}
with  
\begin{align*}
  \bm{N}_a \cdot \frac{\partial \bm{N}_a}{\partial \bm{C}} = \frac{1}{2} \frac{\partial \left( \bm{N}_a \cdot \bm{N}_a \right) }{\partial \bm{C}} = \mathbb{O}.
\end{align*}
Employing the expression derived from \eqref{eq:N_C}, the derivative of $\bm{M}_a$ with respect to $\bm{C}$  can be written as
\begin{align}
  \label{eq:M_C}
\frac{\partial \bm{M}_a}{\partial \bm{C}} =& \frac{\partial \left( \bm{N}_a \otimes \bm{N}_a \right)}{\partial \bm{C}}  \nonumber \\ 
  = & \sum_{b \neq a}^3 \frac{1}{\lambda_a^2 - \lambda_b^2} \frac{1}{2} \left[ \bm{N}_b \otimes \bm{N}_a \otimes  \left( \bm{N}_b \otimes \bm{N}_a + \bm{N}_a \otimes \bm{N}_b \right) +  \bm{N}_a \otimes \bm{N}_b \otimes \left( \bm{N}_b \otimes \bm{N}_a + \bm{N}_a \otimes \bm{N}_b \right) \right] \nonumber  \displaybreak[2] \\ 
  = & \sum_{b \neq a}^3  \frac{1}{\lambda_a^2 - \lambda_b^2} \left( \bm{M}_b \odot \bm{M}_a + \bm{M}_a \odot \bm{M}_b\right) . 
\end{align}
With \eqref{eq:M_C}, we may obtain an explicit formula for $\mathbb Q$ as
\begin{align*}
  \mathbb{Q} = 2\frac{\partial \bm{E}}{\partial \bm{C}} =& \sum_{a = 1}^3 \left( 2 \frac{\partial E}{\partial (\lambda^2)}(\lambda_a) \bm{M}_a \otimes \frac{\partial \lambda_a^2}{\partial \bm{C}}\right) + \sum_{a = 1}^3 2 E(\lambda_a) \frac{\partial \bm{M}_a}{\partial \bm{C}} \nonumber \displaybreak[2] \\
  =& \sum_{a=1}^{3} d_{a} \bm{M}_a \otimes \bm{M}_a + \sum_{a=1}^3 \sum_{b\neq a}^3 \vartheta_{ab}  \bm M_a \odot \bm M_b,
\end{align*}
in which we invoked the relation \cite[p.~91]{Holzapfel2000}
\begin{align}
\label{eq:lambda_C}
\frac{\partial \lambda_a^2}{\partial \bm{C}} =\bm{M}_a,
\end{align}
and the terms $d_a$ and $\vartheta_{ab}$ are given by \eqref{eq:d_theta}.

\section{Derivation of $\mathbb{L}$}
\label{ap:L}
We start the derivation of the explicit formula for $\mathbb L$ by considering the second derivative of $\bm{M}_a$ with respect to $\bm{C}$. Following \eqref{eq:M_C}, we have
\begin{align}
\label{eq:M_CC_de}
\frac{\partial^2 \bm{M}_a}{\partial \bm{C} \partial \bm{C}} &=  \sum_{b \neq a}^3  \left( \bm{M}_b \odot \bm{M}_a + \bm{M}_a \odot \bm{M}_b \right) \otimes \frac{\partial }{\partial \bm{C}} \left( \frac{1}{\lambda_a^2 - \lambda_b^2} \right)   + \sum_{b \neq a}^3  \frac{1}{\lambda_a^2 - \lambda_b^2}   \frac{\partial }{\partial \bm{C}} \left( \bm{M}_b \odot \bm{M}_a + \bm{M}_a \odot \bm{M}_b\right) . 
\end{align}
The first term on the right-hand side can be written as
\begin{align}
  &  \sum_{b \neq a}^3  \left( \bm{M}_b \odot \bm{M}_a + \bm{M}_a \odot \bm{M}_b \right) \otimes \frac{\partial }{\partial \bm{C}} \left( \frac{1}{\lambda_a^2 - \lambda_b^2} \right)  \nonumber \displaybreak[2] \\ 
  & = \sum_{b \neq a}^3 \frac{1}{\left(\lambda_a^2 - \lambda_b^2\right)^2} \left( \bm{M}_b \odot \bm{M}_a + \bm{M}_a \odot \bm{M}_b \right) \otimes  \left(\frac{\partial \lambda_b^2}{\partial \bm{C}} - \frac{\partial \lambda_a^2}{\partial \bm{C}} \right)  \nonumber \displaybreak[2] \\ 
  & = \sum_{b \neq a}^3  \frac{1}{ 4 \left(\lambda_a^2 - \lambda_b^2 \right)^2} \left[  4 \left( \bm{M}_b \odot \bm{M}_a \right) \otimes \bm{M}_b + 4 \left( \bm{M}_a \odot \bm{M}_b \right)  \otimes \bm{M}_b  -  4  \left( \bm{M}_b \odot \bm{M}_a \right) \otimes \bm{M}_a - 4 \left( \bm{M}_a \odot \bm{M}_b  \right) \otimes \bm{M}_a  \right] \nonumber \displaybreak[2] \\ 
  \label{eq:M_CC_1}
  & = \sum_{b \neq a}^3  \frac{1}{ 4 \left(\lambda_a^2 - \lambda_b^2 \right)^2} \left( \mathbb{H}_{abb} - \mathbb{H}_{baa} \right).
\end{align}
In the last equality of the above, we have invoked the following identities
\begin{align}
  \mathbb{H}_{abb} =& 4 \left( \bm{M}_b \odot \bm{M}_a \right) \otimes \bm{M}_b + 4 \left( \bm{M}_a \odot \bm{M}_b \right) \otimes \bm{M}_b,   \nonumber  \displaybreak[2] \\ 
  \label{eq:Haab_Hbaa}
  \mathbb{H}_{baa} =& 4 \left( \bm{M}_b \odot \bm{M}_a \right)  \otimes \bm{M}_a + 4 \left( \bm{M}_a \odot \bm{M}_b \right)  \otimes \bm{M}_a,
\end{align}
due to the definition of $\mathbb H_{abc}$ given in \eqref{eq:H}. Next, we consider the derivative of $\bm{M}_a \odot \bm{M}_b$ with respect to $\bm{C}$. Through exploiting \eqref{eq:N_C}, it can be explicitly represented as
\begin{align*}
  &\sum_{b \neq a}^3  \frac{1}{\lambda_a^2 - \lambda_b^2}   \frac{\partial }{\partial \bm{C}} \left(  \bm{M}_a \odot \bm{M}_b\right)     \nonumber  \displaybreak[2] \\  
  =& \sum_{b \neq a}^3 \frac{1}{\left(\lambda_a^2 - \lambda_b^2 \right) } \left\{   \sum_{c \neq a}^3  \frac{1}{ 4 \left(\lambda_a^2 - \lambda_c^2 \right)}  \left[ \bm{N}_c \otimes \bm{N}_b \otimes \bm{N}_a \otimes \bm{N}_b \otimes \left(  \bm{N}_c \otimes \bm{N}_a + \bm{N}_a \otimes \bm{N}_c \right) \right] \right.  \nonumber \displaybreak[2] \\ 
  &  + \sum_{c \neq b}^3  \frac{1}{ 4 \left(\lambda_b^2 - \lambda_c^2 \right)}  \left[ \bm{N}_a \otimes \bm{N}_c \otimes \bm{N}_a \otimes \bm{N}_b \otimes \left(  \bm{N}_c \otimes \bm{N}_b + \bm{N}_b \otimes \bm{N}_c \right) \right]  \nonumber \displaybreak[2] \\ 
  &  + \sum_{c \neq a}^3  \frac{1}{ 4 \left(\lambda_a^2 - \lambda_c^2 \right)}  \left[ \bm{N}_a \otimes \bm{N}_b \otimes \bm{N}_c \otimes \bm{N}_b \otimes \left(  \bm{N}_c \otimes \bm{N}_a + \bm{N}_a \otimes \bm{N}_c \right) \right]  \nonumber \displaybreak[2] \\ 
  &  + \sum_{c \neq b}^3  \frac{1}{ 4 \left(\lambda_b^2 - \lambda_c^2 \right)}  \left[ \bm{N}_a \otimes \bm{N}_b \otimes \bm{N}_a \otimes \bm{N}_c \otimes \left(  \bm{N}_c \otimes \bm{N}_b + \bm{N}_b \otimes \bm{N}_c \right) \right]  \nonumber \displaybreak[2] \\ 
  &  + \sum_{c \neq a}^3  \frac{1}{ 4 \left(\lambda_a^2 - \lambda_c^2 \right)}  \left[ \bm{N}_c \otimes \bm{N}_b \otimes \bm{N}_b \otimes \bm{N}_a \otimes \left(  \bm{N}_c \otimes \bm{N}_a + \bm{N}_a \otimes \bm{N}_c \right) \right]  \nonumber \displaybreak[2] \\ 
  &  + \sum_{c \neq b}^3  \frac{1}{ 4 \left(\lambda_b^2 - \lambda_c^2 \right)}  \left[ \bm{N}_a \otimes \bm{N}_c \otimes \bm{N}_b \otimes \bm{N}_a \otimes \left(  \bm{N}_c \otimes \bm{N}_b + \bm{N}_b \otimes \bm{N}_c \right) \right]  \nonumber \displaybreak[2] \\ 
  &  + \sum_{c \neq b}^3  \frac{1}{ 4 \left(\lambda_b^2 - \lambda_c^2 \right)}  \left[ \bm{N}_a \otimes \bm{N}_b \otimes \bm{N}_c \otimes \bm{N}_a \otimes \left(  \bm{N}_c \otimes \bm{N}_b + \bm{N}_b \otimes \bm{N}_c \right) \right]  \nonumber \displaybreak[2] \\ 
  & \left. + \sum_{c \neq a}^3  \frac{1}{ 4 \left(\lambda_a^2 - \lambda_c^2 \right)}  \left[ \bm{N}_a \otimes \bm{N}_b \otimes \bm{N}_b \otimes \bm{N}_c \otimes \left(  \bm{N}_c \otimes \bm{N}_a + \bm{N}_a \otimes \bm{N}_c \right) \right] \right\}.
\end{align*} 
The above can be reorganized as
\begin{align}
  &\sum_{b \neq a}^3  \frac{1}{\lambda_a^2 - \lambda_b^2}   \frac{\partial }{\partial \bm{C}} \left( \bm{M}_a \odot \bm{M}_b \right) \nonumber \displaybreak[2] \\ 
  =&  \sum_{b \neq a }^3 \sum_{\substack{ c \neq a \\  c \neq b} }^3 \frac{1}{ 4 \left(\lambda_a^2 - \lambda_b^2 \right) \left(\lambda_a^2 - \lambda_c^2 \right)} \left[  \bm{N}_c \otimes \bm{N}_b \otimes \bm{N}_a \otimes \bm{N}_b \otimes \left(  \bm{N}_c \otimes \bm{N}_a + \bm{N}_a \otimes \bm{N}_c \right) \right.  \nonumber \displaybreak[2] \\ 
  & + \bm{N}_a \otimes \bm{N}_b \otimes \bm{N}_c \otimes \bm{N}_b \otimes \left(  \bm{N}_c \otimes \bm{N}_a + \bm{N}_a \otimes \bm{N}_c \right)   \nonumber \displaybreak[2] \\ 
  & + \bm{N}_c \otimes \bm{N}_b \otimes \bm{N}_b \otimes \bm{N}_a \otimes \left(  \bm{N}_c \otimes \bm{N}_a + \bm{N}_a \otimes \bm{N}_c \right)  \nonumber \displaybreak[2] \\ 
  & + \left. \bm{N}_a \otimes \bm{N}_b \otimes \bm{N}_b \otimes \bm{N}_c \otimes \left(  \bm{N}_c \otimes \bm{N}_a + \bm{N}_a \otimes \bm{N}_c \right) \right]  \nonumber \displaybreak[2] \\ 
  & + \sum_{b \neq a }^3 \sum_{\substack{ c \neq a \\  c \neq b} }^3 \frac{1}{ 4 \left(\lambda_a^2 - \lambda_b^2 \right) \left(\lambda_b^2 - \lambda_c^2 \right)} \left[  \bm{N}_a \otimes \bm{N}_c \otimes \bm{N}_a \otimes \bm{N}_b \otimes \left(  \bm{N}_c \otimes \bm{N}_b + \bm{N}_b \otimes \bm{N}_c \right) \right.  \nonumber \displaybreak[2] \\ 
  & + \bm{N}_a \otimes \bm{N}_b \otimes \bm{N}_a \otimes \bm{N}_c \otimes \left(  \bm{N}_c \otimes \bm{N}_b + \bm{N}_b \otimes \bm{N}_c \right)   \nonumber \displaybreak[2] \\ 
  & + \bm{N}_a \otimes \bm{N}_c \otimes \bm{N}_b \otimes \bm{N}_a \otimes \left(  \bm{N}_c \otimes \bm{N}_b + \bm{N}_b \otimes \bm{N}_c \right) \nonumber \displaybreak[2] \\ 
  & + \left. \bm{N}_a \otimes \bm{N}_b \otimes \bm{N}_c \otimes \bm{N}_a \otimes \left(  \bm{N}_c \otimes \bm{N}_b + \bm{N}_b \otimes \bm{N}_c \right) \right]  \nonumber \displaybreak[2] \\ 
  & + \sum_{b \neq a}^3  \frac{1}{ 4  \left(\lambda_a^2 - \lambda_b^2 \right)^2} \left[  \bm{N}_b \otimes \bm{N}_b \otimes \bm{N}_a \otimes \bm{N}_b \otimes \left(  \bm{N}_b \otimes \bm{N}_a + \bm{N}_a \otimes \bm{N}_b \right) \right.  \nonumber \displaybreak[2] \\ 
  & + \bm{N}_a \otimes \bm{N}_b \otimes \bm{N}_b \otimes \bm{N}_b \otimes \left(  \bm{N}_b \otimes \bm{N}_a + \bm{N}_a \otimes \bm{N}_b \right)   \nonumber \displaybreak[2] \\ 
  & + \bm{N}_b \otimes \bm{N}_b \otimes \bm{N}_b \otimes \bm{N}_a \otimes \left(  \bm{N}_b \otimes \bm{N}_a + \bm{N}_a \otimes \bm{N}_b \right)  \nonumber \displaybreak[2] \\ 
  & + \left.  \bm{N}_a \otimes \bm{N}_b \otimes \bm{N}_b \otimes \bm{N}_b \otimes \left(  \bm{N}_b \otimes \bm{N}_a + \bm{N}_a \otimes \bm{N}_b \right) \right]  \nonumber \displaybreak[2] \\  
  & - \left[  \bm{N}_a \otimes \bm{N}_a \otimes \bm{N}_a \otimes \bm{N}_b \otimes \left(  \bm{N}_a \otimes \bm{N}_b + \bm{N}_b \otimes \bm{N}_a \right) \right.  \nonumber \displaybreak[2] \\ 
  & + \bm{N}_a \otimes \bm{N}_b \otimes \bm{N}_a \otimes \bm{N}_a \otimes \left(  \bm{N}_a \otimes \bm{N}_b + \bm{N}_b \otimes \bm{N}_a \right)   \nonumber \displaybreak[2]\\ 
  & + \bm{N}_a \otimes \bm{N}_a \otimes \bm{N}_b \otimes \bm{N}_a \otimes \left(  \bm{N}_a \otimes \bm{N}_b + \bm{N}_b \otimes \bm{N}_a \right) \nonumber \displaybreak[2]\\ 
  \label{eq:odot_C_sep}
  & + \left. \bm{N}_a \otimes \bm{N}_b \otimes \bm{N}_a \otimes \bm{N}_a \otimes \left(  \bm{N}_a \otimes \bm{N}_b + \bm{N}_b \otimes \bm{N}_a \right) \right]. 
\end{align}
With the definition of $\mathbb H$ given in \eqref{eq:H} and \eqref{eq:odot_C_sep}, the second term on the right-hand side of \eqref{eq:M_CC_de} can be represented as
\begin{align}
  &\sum_{b \neq a}^3  \frac{1}{\lambda_a^2 - \lambda_b^2}   \frac{\partial }{\partial \bm{C}} \left( \bm{M}_b \odot \bm{M}_a + \bm{M}_a \odot \bm{M}_b\right)   \nonumber \displaybreak[2] \\ 
   = & \sum_{b \neq a }^3 \sum_{\substack{ c \neq a \\  c \neq b} }^3 \frac{1}{4 \left(\lambda_a^2 - \lambda_b^2 \right) \left(\lambda_a^2 - \lambda_c^2 \right)} \left[\mathbb{H}_{bca}  + \mathbb{H}_{bac} \right]  
    + \sum_{b \neq a }^3 \sum_{ \substack{ c \neq a \\  c \neq b} }^3\frac{1}{ 4 \left(\lambda_a^2 - \lambda_b^2 \right) \left(\lambda_b^2 - \lambda_c^2 \right)} \left[\mathbb{H}_{abc}  + \mathbb{H}_{acb} \right]   \nonumber \displaybreak[2] \\ 
   \label{eq:M_CC_2}
  & + \sum_{b \neq a}^3  \frac{1}{ 4 \left(\lambda_a^2 - \lambda_b^2 \right)^2} \left[ \left(\mathbb{H}_{bba}  + \mathbb{H}_{bab} \right) - \left(\mathbb{H}_{aab}  + \mathbb{H}_{aba} \right) \right]   . 
\end{align}
Combining \eqref{eq:M_CC_1} and \eqref{eq:M_CC_2} leads to
\begin{align}
  \frac{\partial^2 \bm{M}_a}{\partial \bm{C}\partial \bm{C}} =&\sum_{b \neq a}^3  \frac{1}{ 4 \left(\lambda_a^2 - \lambda_b^2 \right)^2} \left[  \left( \mathbb{H}_{bba}  + \mathbb{H}_{bab}  + \mathbb{H}_{abb} \right) - \left(\mathbb{H}_{aab}  + \mathbb{H}_{aba}  +  \mathbb{H}_{baa} \right) \right]  \nonumber  \displaybreak[2]\\ 
  \label{eq:M_CC}
  &+ \sum_{b \neq a }^3 \sum_{ \substack{ c \neq a \\  c \neq b} }^3\frac{1}{ 4 \left(\lambda_a^2 - \lambda_b^2 \right) \left(\lambda_a^2 - \lambda_c^2 \right)} \left[\mathbb{H}_{bca} + \mathbb{H}_{bac}  +\mathbb{H}_{abc}   \right],\displaybreak[2]
\end{align}
in which we have invoked the following identity
\begin{align*}
  \sum_{b \neq a }^3 \sum_{ \substack{ c \neq a \\  c \neq b} }^3 \frac{1}{ 4 \left(\lambda_a^2 - \lambda_b^2 \right) \left(\lambda_b^2 - \lambda_c^2 \right)} \left[\mathbb{H}_{abc}  + \mathbb{H}_{acb} \right] = \sum_{b \neq a}^3 \sum_{\substack{ c \neq a \\  c \neq b} }^3 \frac{1}{ 4 \left(\lambda_a^2 - \lambda_b^2 \right) \left(\lambda_a^2 - \lambda_c^2 \right)} \mathbb{H}_{abc} .
\end{align*}
According to \eqref{eq:M_CC} and \eqref{eq:lambda_C}, we may have $\mathbb{L}$ as
\begin{align*}
  \mathbb{L} = 4\frac{\partial^2 \bm{E}}{\partial \bm{C} \partial \bm{C}} =&  4 \frac{\partial }{\partial \bm{C}} \left[ \sum_{a = 1}^3 \left(  \frac{\partial E}{\partial (\lambda^2)}(\lambda_a) \bm{M}_a \otimes \bm{M}_a \right) + \sum_{a = 1}^3  E(\lambda_a) \frac{\partial \bm{M}_a}{\partial \bm{C}} \right] \nonumber \displaybreak[2]\\  
  =&  \sum_{a=1}^3 4 \frac{\partial^2 E}{\partial (\lambda^2) \partial (\lambda^2)} \left( \lambda_a \right)\bm{M}_a  \otimes \bm{M}_a  \otimes \bm{M}_a + \sum_{a=1}^3 4 \frac{\partial E}{\partial (\lambda^2)}(\lambda_a)  \frac{\partial }{\partial \bm{C}} \left( \bm{M}_a \otimes \bm{M}_a \right) \nonumber \displaybreak[2] \\ 
  & + \sum_{a=1}^3 4  \frac{\partial E}{\partial (\lambda^2)} \left( \lambda_a \right) \frac{\partial \bm{M}_a}{\partial \bm{C}} \otimes \bm{M}_a + \sum_{a=1}^3 4 E(\lambda_a) \frac{\partial^2 \bm{M}_a}{\partial \bm{C} \partial \bm{C}}. \displaybreak[2]
\end{align*}
Combining the results of \eqref{eq:d_theta}$_1$, \eqref{eq:fa}, \eqref{eq:H}, \eqref{eq:N_C}, \eqref{eq:M_C}, \eqref{eq:Haab_Hbaa} and \eqref{eq:M_CC}, $\mathbb{L}$ may be represented as
\begin{align}
\mathbb{L} =& \sum_{a=1}^3 f_a \bm{M}_a  \otimes \bm{M}_a  \otimes \bm{M}_a + \sum_{a=1}^3 \sum_{b\neq a}^3 \frac{d_a}{2 (\lambda_a^2 - \lambda_b^2)} \left( \mathbb{H}_{aba} + \mathbb{H}_{aab} + \mathbb{H}_{baa} \right)  \nonumber  \displaybreak[2]\\ 
& + \sum_{b \neq a}^3  \frac{ E(\lambda_a)}{  \left(\lambda_a^2 - \lambda_b^2 \right)^2} \left[  \left( \mathbb{H}_{bba}  + \mathbb{H}_{bab}  + \mathbb{H}_{abb} \right) - \left(\mathbb{H}_{aab}  + \mathbb{H}_{aba}  +  \mathbb{H}_{baa} \right) \right]  \nonumber \displaybreak[2] \\ 
\label{eq:L_H}
&+ \sum_{b \neq a }^3 \sum_{ \substack{ c \neq a \\  c \neq b} }^3\frac{ E(\lambda_a)}{  \left(\lambda_a^2 - \lambda_b^2 \right) \left(\lambda_a^2 - \lambda_c^2 \right)} \left[\mathbb{H}_{bca} + \mathbb{H}_{bac}  +\mathbb{H}_{abc}  \right]. \displaybreak[2]
\end{align}
In the above, we have utilized the following,
\begin{align*}
\frac{\partial }{\partial \bm{C}} \left( \bm{M}_a \otimes \bm{M}_a\right) = \frac{1}{4} \left( \mathbb{H}_{aba} + \mathbb{H}_{aab} \right),  \quad
\frac{\partial \bm{M}_a}{ \partial \bm{C}} = \frac{1}{4} \mathbb{H}_{baa}. 
\end{align*}
With  \eqref{eq:L_H}, we may obtain an explicit formula for $\mathbb{L}$  as
\begin{align*}
  \mathbb{L} &= \sum^{3}_{a=1} f_{a} \bm{M}_a\otimes \bm{M}_a\otimes \bm{M}_a + \sum^{3}_{a=1} \sum^{3}_{b \neq a} \xi_{ab} \left( \mathbb{H}_{abb} + \mathbb{H}_{bab} + \mathbb{H}_{bba} \right) + \sum^{3}_{a=1} \sum^{3}_{b \neq a} \sum^{3}_{\substack{c \neq a \\ c \neq b}} \eta \mathbb{H}_{abc} , 
\end{align*}
in which we invoked the following identities,
\begin{align}
\label{eq:d_lambda}
&\sum_{a = 1}^3 \sum_{b \neq a}^3 \frac{d_a }{ 2 \left(\lambda_a^2 - \lambda_b^2 \right)} \left(\mathbb{H}_{aba} + \mathbb{H}_{aab} + \mathbb{H}_{baa} \right)  =  - \sum_{a = 1}^3 \sum_{b \neq a}^3 \frac{d_b }{ 2 \left( \lambda_a^2 - \lambda_b^2 \right) } \left(\mathbb{H}_{abb} + \mathbb{H}_{bab} + \mathbb{H}_{bba} \right)  , \displaybreak[2]  \\ 
\label{eq:E_lambda}
& \sum_{b \neq a}^3  \frac{ E(\lambda_a)}{  \left(\lambda_a^2 - \lambda_b^2 \right)^2} \left[  \left( \mathbb{H}_{bba}  + \mathbb{H}_{bab}  + \mathbb{H}_{abb} \right) - \left(\mathbb{H}_{aab}  + \mathbb{H}_{aba}  +  \mathbb{H}_{baa} \right) \right] =  \sum_{a = 1}^3 \sum_{b \neq a}^3 \frac{\vartheta_{ab}}{ 2(\lambda_a^2 - \lambda_b^2)} \left(\mathbb{H}_{abb} + \mathbb{H}_{bab} + \mathbb{H}_{bba} \right) ,  \displaybreak[2] \\ 
\label{eq:E_lambda2}
& \sum_{b \neq a }^3 \sum_{ \substack{ c \neq a \\  c \neq b} }^3\frac{ E(\lambda_a)}{  \left(\lambda_a^2 - \lambda_b^2 \right) \left(\lambda_a^2 - \lambda_c^2 \right)} \left[\mathbb{H}_{bca} + \mathbb{H}_{bac}  +\mathbb{H}_{abc}  \right] \nonumber \\ 
&= \sum_{a = 1}^3 \sum_{b \neq a}^3 \sum_{\substack{ c \neq a \\  c \neq b} }^3  \left[\sum_{d = 1}^3 \sum_{e \neq d}^3 \sum_{\substack{ f \neq d \\  f \neq e} }^3 \frac{E(\lambda_d)}{2 \left(\lambda_d^2 - \lambda_e^2 \right) \left(\lambda_d^2 - \lambda_f^2 \right)} \right]\mathbb{H}_{abc}, \displaybreak[2]
\end{align}
and the terms $\xi_{ab}$ and $\eta$ are given by \eqref{eq:xiab} and \eqref{eq:eta}, respectively. 

\section{A discourse on the connection between the two kinematic assumptions}
\label{Connections between the two kinematic assumptions}
In this section, we briefly explore the relationship between the multiplicative and additive decompositions. In the assumption of the multiplicative decomposition, the deformation gradient is decomposed as $\bm{F} = \bm{F}^{\mathrm{e}} \bm{F}^{\mathrm{v}}$. This leads to the elastic and viscous deformation tensors as
\begin{align*}
\bm{C}^{\mathrm{e}} := \bm{F}^{\mathrm{e}\:T} \bm{F}^{\mathrm{e}} = \bm{F}^{\mathrm{v}\:-T} \bm{C} \bm{F}^{\mathrm{v}\:-1} \quad \mbox{and} \quad \bm{C}^{\mathrm{v}} := \bm{F}^{\mathrm{v}\:T} \bm{F}^{\mathrm{v}}.
\end{align*}
Similar to the definition \eqref{eq:C_spectral}, the two enjoy the following spectral decompositions
\begin{align*}
\bm{C}^{\mathrm{e}} := \sum_{a=1}^{3} \lambda^{\mathrm{e}\:2}_{a} \hat{\bm{N}}_{a} \otimes \hat{\bm{N}}_{a} \quad \mbox{and} \quad \bm{C}^{\mathrm{v}} := \sum_{a=1}^{3} \lambda^{\mathrm{v}\:2}_{a} \bm{N}_{a} \otimes \bm{N}_{a},
\end{align*}
in which, $\{\lambda_a^{\mathrm{e}} \}$, $\{\lambda_a^{\mathrm{v}} \}$, and $\{\hat{\bm{N}}_{a}\}$, for $a = 1, 2, 3$, represent the elastic principal stretches, viscous principal stretches, and principle directions related to the intermediate configuration, respectively. Following \eqref{eq:Hill_strain}, we define two generalized strains $\hat{\bm E}^{\mathrm{e}}$ and $\hat{\bm E}^{\mathrm{v}}$ as
\begin{align*}
\hat{\bm E}^{\mathrm{e}}:= \sum_{a=1}^{3} E (\lambda_a^{\mathrm{e}}) \hat{\bm{N}}_a \otimes \hat{\bm{N}}_a \quad \mbox{and} \quad \hat{\bm E}^{\mathrm{v}}:= \sum_{a=1}^{3} E^{\mathrm v} (\lambda_a^{\mathrm{v}}) \bm{N}_a \otimes \bm{N}_a.
\end{align*}
In the above, we use hats to distinguish them from their counterparts in the additive theory. In the following, we investigate the connections between the two kinematic assumptions. The first result is related to the equivalence of the strain invariants when the strain is of the Green-Lagrange type.

\begin{proposition} 
\label{prop:basic-invariants}
If we interpret the internal state variable $\bm{\Gamma}$ as $\bm{C}^{\mathrm{v}}$, the basic invariants of $\hat{\bm{E}}^{\mathrm{e}\:(2)}_{\mathrm{SH}}$ and $\bm E^{\mathrm{e}\:(2)}_{\mathrm{SH}} \bm{\Gamma}^{-1}$ coincide, i.e.
\begin{align*}
J_{i} \left( \hat{\bm{E}}^{\mathrm{e}\:(2)}_{\mathrm{SH}} \right) = J_{i} \left(\bm{E}^{\mathrm{e}\:(2)}_{\mathrm{SH}} \bm \Gamma^{-1} \right), \quad \mbox{for} \quad i = 1, 2, 3.
\end{align*}
In the above, the basic invariants are defined as the traces of tensor powers, i.e., $J_{i} \left( \cdot \right) := \mathrm{tr} \left( ( \cdot )^{i} \right)$, where $(\cdot )$ is a rank-two tensor. 
\end{proposition}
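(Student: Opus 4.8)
The plan is to reduce both invariant arguments to explicit closed forms in $\bm C$, $\bm C^{\mathrm v}$, and $\bm F^{\mathrm v}$, and then observe that the two resulting tensors differ only by a similarity transformation, so that all their traces of powers coincide. First I would specialize the Green-Lagrange scale function $E^{(2)}_{\mathrm{SH}}(\lambda) = (\lambda^2-1)/2$ to each strain. Using the spectral representation of $\bm C^{\mathrm e}$ and the completeness of $\{\hat{\bm N}_a\}$, the intermediate-configuration elastic strain collapses to $\hat{\bm E}^{\mathrm{e}\:(2)}_{\mathrm{SH}} = (\bm C^{\mathrm e} - \bm I)/2$. On the additive side the same specialization gives $\bm E^{(2)}_{\mathrm{SH}} = (\bm C - \bm I)/2$ and $\bm E^{\mathrm{v}\:(2)}_{\mathrm{SH}} = (\bm \Gamma - \bm I)/2$, so that $\bm E^{\mathrm{e}\:(2)}_{\mathrm{SH}} = \bm E^{(2)}_{\mathrm{SH}} - \bm E^{\mathrm{v}\:(2)}_{\mathrm{SH}} = (\bm C - \bm \Gamma)/2$.

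With the identification $\bm \Gamma = \bm C^{\mathrm v}$ I would then compute
\[
\bm E^{\mathrm{e}\:(2)}_{\mathrm{SH}} \, \bm \Gamma^{-1} = \tfrac12 \left( \bm C (\bm C^{\mathrm v})^{-1} - \bm I \right).
\]
Since $J_{i}\!\left( \tfrac12 ( \bm X - \bm I ) \right) = 2^{-i}\,\mathrm{tr}\!\left( ( \bm X - \bm I )^{i} \right)$, the assertion reduces to showing $\mathrm{tr}\!\left( (\bm C^{\mathrm e} - \bm I)^{i} \right) = \mathrm{tr}\!\left( (\bm C (\bm C^{\mathrm v})^{-1} - \bm I)^{i} \right)$ for $i = 1,2,3$. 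The crux is a similarity relation: writing $\bm C^{\mathrm e} = \bm F^{\mathrm{v}\:-T} \bm C \bm F^{\mathrm{v}\:-1}$ and $\bm C^{\mathrm v} = \bm F^{\mathrm{v}\:T} \bm F^{\mathrm v}$, a direct manipulation gives $\bm C (\bm C^{\mathrm v})^{-1} = \bm C \bm F^{\mathrm{v}\:-1} \bm F^{\mathrm{v}\:-T}$, and conjugating this by $\bm F^{\mathrm{v}\:-T}$ returns $\bm C^{\mathrm e}$ exactly, that is,
\[
\bm C^{\mathrm e} = \bm F^{\mathrm{v}\:-T} \left[ \bm C (\bm C^{\mathrm v})^{-1} \right] \bm F^{\mathrm{v}\:T}.
\]
Thus $\bm C^{\mathrm e}$ and $\bm C (\bm C^{\mathrm v})^{-1}$ are similar, and subtracting $\bm I$ (invariant under any conjugation) preserves the similarity. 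Because similar matrices satisfy $\bm Y^{i} = \bm P \bm X^{i} \bm P^{-1}$ and the trace is cyclic, their traces of powers agree for every $i$, which closes the argument.

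The main subtlety I anticipate is bookkeeping rather than conceptual: the tensor $\bm C (\bm C^{\mathrm v})^{-1}$, and hence $\bm E^{\mathrm{e}\:(2)}_{\mathrm{SH}} \, \bm \Gamma^{-1}$, is a product of two symmetric tensors and is therefore generally \emph{not} symmetric, so one cannot invoke any spectral representation on that side; the entire equivalence rests solely on the similarity invariance of $\mathrm{tr}\!\left( (\cdot)^{i} \right)$. I would also take care to justify that $\hat{\bm E}^{\mathrm{e}\:(2)}_{\mathrm{SH}} = (\bm C^{\mathrm e} - \bm I)/2$ even though the strain is assembled from the intermediate-configuration eigenbasis $\{\hat{\bm N}_a\}$, and to remark that, while restricting to $i \le 3$ captures the three basic invariants in three dimensions, the similarity argument in fact yields the identity for all powers $i$.
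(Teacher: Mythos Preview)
Your proposal is correct and, in fact, somewhat cleaner than the paper's own argument. Both proofs start identically, reducing to $\hat{\bm E}^{\mathrm{e}\:(2)}_{\mathrm{SH}} = \tfrac12(\bm C^{\mathrm e} - \bm I)$ and $\bm E^{\mathrm{e}\:(2)}_{\mathrm{SH}}\,\bm\Gamma^{-1} = \tfrac12(\bm C\,\bm C^{\mathrm v\,-1} - \bm I)$. From there the paper proceeds case by case: it verifies $\mathrm{tr}\bigl((\bm C\,\bm C^{\mathrm v\,-1})^{i}\bigr) = \mathrm{tr}\bigl(\bm C^{\mathrm e\,i}\bigr)$ for $i=1,2,3$ by separate cyclic-trace manipulations, and then expands $(\bm C^{\mathrm e}-\bm I)^{2}$ and $(\bm C^{\mathrm e}-\bm I)^{3}$ binomially to match the corresponding expansions on the other side. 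You instead spot the single similarity relation $\bm C^{\mathrm e} = \bm F^{\mathrm v\,-T}\bigl[\bm C\,(\bm C^{\mathrm v})^{-1}\bigr]\bm F^{\mathrm v\,T}$, which immediately transfers to $\bm C^{\mathrm e}-\bm I$ and $\bm C\,(\bm C^{\mathrm v})^{-1}-\bm I$ and yields equality of $\mathrm{tr}\bigl((\cdot)^{i}\bigr)$ for every $i$ in one stroke. The underlying algebra is the same (cyclic invariance of the trace), but your packaging avoids the repeated expansions and, as you note, delivers the identity for all powers, not just $i\le 3$.
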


\begin{proof}
The first basic invariant of $\hat{\bm{E}}^{\mathrm{e}\:(2)}_{\mathrm{SH}}$ is
\begin{align*}
J_{1} \left( \hat{\bm{E}}^{\mathrm{e}\:(2)}_{\mathrm{SH}} \right) &= \mathrm{tr} \left( \hat{\bm{E}}^{\mathrm{e}\:(2)}_{\mathrm{SH}} \right) = \frac{1}{2} \left( \mathrm{tr} \left( \bm{F}^{\mathrm{v}\:-T} \bm{C} \bm{F}^{\mathrm{v}\:-1} \right) - 3 \right) = \frac{1}{2} \left( \mathrm{tr}\left( \bm{C} \bm{C}^{\mathrm{v}\:-1}\right) - 3 \right).
\end{align*}
With the interpretation of $\bm{\Gamma}$ as $\bm{C}^{\mathrm{v}}$, we have
\begin{align*}
\bm{E}_{\mathrm{e}\:\mathrm{SH}}^{(2)} \bm{C}^{\mathrm{v}\:-1} = \frac{1}{2} \left( \bm{C} \bm{C}^{\mathrm{v}\:-1} - \bm{\Gamma} \bm{C}^{\mathrm{v}\:-1} \right) = \frac{1}{2} \left( \bm{C} \bm{C}^{\mathrm{v}\:-1} - \bm{I} \right).
\end{align*}
Subsequently, we have
\begin{align*}
J_{1}\left(\bm{E}_{\mathrm{e}\:\mathrm{SH}}^{(2)} \bm{C}^{\mathrm{v}\:-1} \right) = \mathrm{tr} \left(\bm{E}_{\mathrm{e}\:\mathrm{SH}}^{(2)} \bm{C}^{\mathrm{v}\:-1} \right) = \frac12 \left( \mathrm{tr} \left( \bm{C} \bm{C}^{\mathrm{v}\:-1} \right) - 3 \right) = J_{1} \left( \hat{\bm{E}}^{\mathrm{e}\:(2)}_{\mathrm{SH}} \right).
\end{align*}
From the derivation above, we establish the identity $\mathrm{tr}\left( \bm{C} \bm{C}^{\mathrm{v}\:-1} \right) = \mathrm{tr}\left( \bm{C}^{\mathrm{e}} \right)$. After straightforward computations, we may further obtain
\begin{align*}
\mathrm{tr}\left( \left( \bm{C} \bm{C}^{\mathrm{v}\:-1} \right)^2 \right) &= \left( \bm{C} \bm{C}^{\mathrm{v}\:-1} \right) : \left( \bm{C}^{\mathrm{v}\:-1} \bm{C} \right) \displaybreak[2] \\  
&= \left( \bm{F}^{\mathrm{v}\:-T} \bm{C} \bm{F}^{\mathrm{v}\:-1} \right) : \left( \bm{F}^{\mathrm{v}\:-T} \bm{C} \bm{F}^{\mathrm{v}\:-1} \right) \displaybreak[2] \\
&= \bm{C}^{\mathrm{e}} : \bm{C}^{\mathrm{e}} \displaybreak[2] \\
&= \mathrm{tr} \left( \bm{C}^{\mathrm{e}\:2} \right)
\end{align*}
and
\begin{align*}
\mathrm{tr}\left( \left( \bm{C} \bm{C}^{\mathrm{v}\:-1} \right)^3 \right) &= \left( \bm{C} \bm{C}^{\mathrm{v}\:-1} \bm{C} \bm{C}^{\mathrm{v}\:-1} \right) : \left( \bm{C}^{\mathrm{v}\:-1} \bm{C} \right) \displaybreak[2] \\
&= \left( \bm{F}^{\mathrm{v}\:-T} \bm{C} \bm{C}^{\mathrm{v}\:-1} \bm{C} \bm{F}^{\mathrm{v}\:-1} \right) : \left( \bm{F}^{\mathrm{v}\:-T} \bm{C} \bm{F}^{\mathrm{v}\:-1} \right) \displaybreak[2] \\
&= \bm{C}^{\mathrm{e}\:2} : \bm{C}^{\mathrm{e}} \displaybreak[2] \\
&= \mathrm{tr} \left( \bm{C}^{\mathrm{e}\:3} \right).
\end{align*}
Regarding the second and third basic invariants, we have
\begin{align*}
J_{2} \left( \hat{\bm{E}}^{\mathrm{e}\:(2)}_{\mathrm{SH}} \right) = \mathrm{tr} \left( \left( \hat{\bm{E}}^{\mathrm{e}\:(2)}_{\mathrm{SH}} \right)^2  \right) = \frac{1}{4} \left( \mathrm{tr} \left( \left(\bm{C} \bm{C}^{\mathrm{v}\:-1} \right)^2  \right) - 2\mathrm{tr}\left(\bm{C} \bm{C}^{\mathrm{v}\:-1} \right) + 3 \right),
\end{align*}
and
\begin{align*}
J_{3} \left( \hat{\bm{E}}^{\mathrm{e}\:(2)}_{\mathrm{SH}} \right) = \mathrm{tr} \left( \left( \hat{\bm{E}}^{\mathrm{e}\:(2)}_{\mathrm{SH}} \right)^3 \right) = \frac{1}{8} \left( \mathrm{tr} \left( \left(\bm{C} \bm{C}^{\mathrm{v}\:-1} \right)^3  \right) - 3\mathrm{tr} \left( \left(\bm{C} \bm{C}^{\mathrm{v}\:-1} \right)^2 \right) + 3\mathrm{tr}\left(\bm{C} \bm{C}^{\mathrm{v}\:-1} \right) - 3 \right).
\end{align*}
With the following relations 
\begin{align*}
\bm{C} \bm{C}^{\mathrm{v}\:-1} =& 2 \bm{E}_{\mathrm{SH}}^{(2)} \bm{C}^{\mathrm{v}\:-1} + \bm{I}, \displaybreak[2] \\
\left( \bm{C} \bm{C}^{\mathrm{v}\:-1} \right)^2 =& 4 \left( \bm{E}_{\mathrm{SH}}^{(2)} \bm{C}^{\mathrm{v}\:-1} \right)^2 + 4 \bm{E}_{\mathrm{SH}}^{(2)} \bm{C}^{\mathrm{v}\:-1} + \bm{I}, \displaybreak[2] \\
\left( \bm{C} \bm{C}^{\mathrm{v}\:-1} \right)^3 =& 8 \left( \bm{E}_{\mathrm{SH}}^{(2)} \bm{C}^{\mathrm{v}\:-1} \right)^3 + 12 \left( \bm{E}_{\mathrm{SH}}^{(2)} \bm{C}^{\mathrm{v}\:-1} \right)^2 + 6 \bm{E}_{\mathrm{SH}}^{(2)} \bm{C}^{\mathrm{v}\:-1} + \bm{I},
\end{align*}
we get the results regarding the second and third basic invariants,
\begin{align*}
J_{2} \left( \hat{\bm{E}}^{\mathrm{e}\:(2)}_{\mathrm{SH}} \right) =& \mathrm{tr} \left( \left( \bm{E}_{\mathrm{SH}}^{(2)} \bm{C}^{\mathrm{v}\:-1} \right)^{2} \right) = J_{2} \left( \bm{E}_{\mathrm{SH}}^{(2)} \bm{C}^{\mathrm{v}\:-1} \right), \displaybreak[2] \\
J_{3} \left( \hat{\bm{E}}^{\mathrm{e}\:(2)}_{\mathrm{SH}} \right) =& \mathrm{tr} \left( \left( \bm{E}_{\mathrm{SH}}^{(2)} \bm{C}^{\mathrm{v}\:-1} \right)^{3} \right) = J_{3} \left( \bm{E}_{\mathrm{SH}}^{(2)} \bm{C}^{\mathrm{v}\:-1}\right),
\end{align*}
which completes the proof.
\end{proof}

In the second result, we consider the Hencky strain and assume the deformation gradient $\bm F^{\mathrm v}$ is a pure stretch tensor and coaxial with $\bm C$. This assumption suggests that $\bm F^{\mathrm v}$ commutes with $\bm C$, which eventually leads to an additive split structure of the strain within the multiplicative decomposition framework. 

\begin{proposition}
If $\bm F^{\mathrm{v}}$ is a pure stretch tensor coaxial with $\bm C$, we have the following additive split structure of strains
\begin{align*}
\bm E^{(0)}_{\mathrm{SH}} = \hat{\bm{E}}^{\mathrm{e}\:(0)}_{\mathrm{SH}} + \hat{\bm{E}}^{\mathrm{v}\:(0)}_{\mathrm{SH}}.
\end{align*} 
\end{proposition}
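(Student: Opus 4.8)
The plan is to exploit the coaxiality hypothesis to reduce the tensorial identity to a scalar relation among principal stretches, and then to invoke the defining property of the logarithmic (Hencky) scale function, which is precisely what converts a multiplicative stretch decomposition into an additive strain decomposition. The whole argument hinges on the observation that, once all the relevant tensors are simultaneously diagonalizable, the identity need only be checked on the coefficients.

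First I would translate the hypothesis into spectral form. Since $\bm F^{\mathrm{v}}$ is a pure stretch tensor, it is symmetric positive definite, and coaxiality with $\bm C$ means the two share the eigenbasis $\{\bm N_a\}$. Hence I may write $\bm F^{\mathrm{v}} = \sum_{a=1}^{3} \lambda_a^{\mathrm{v}} \bm N_a \otimes \bm N_a$ alongside $\bm C = \sum_{a=1}^{3} \lambda_a^2 \bm N_a \otimes \bm N_a$. It follows at once that $\bm C^{\mathrm{v}} = \bm F^{\mathrm{v}\:T}\bm F^{\mathrm{v}} = \sum_{a=1}^{3} (\lambda_a^{\mathrm{v}})^2 \bm N_a \otimes \bm N_a$, which is consistent with the spectral form posited in this appendix and identifies the viscous principal stretches with $\lambda_a^{\mathrm{v}}$ in the basis $\{\bm N_a\}$.

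Next I would compute the elastic deformation tensor $\bm C^{\mathrm{e}} = \bm F^{\mathrm{v}\:-T} \bm C \bm F^{\mathrm{v}\:-1}$. Because $\bm F^{\mathrm{v}}$ is symmetric and coaxial with $\bm C$, its inverse $\bm F^{\mathrm{v}\:-1} = \sum_{a=1}^{3} (\lambda_a^{\mathrm{v}})^{-1} \bm N_a \otimes \bm N_a$ commutes with $\bm C$, so the triple product collapses to $\bm C^{\mathrm{e}} = \sum_{a=1}^{3} (\lambda_a/\lambda_a^{\mathrm{v}})^2 \bm N_a \otimes \bm N_a$. This identifies the elastic principal directions as $\hat{\bm N}_a = \bm N_a$ and the elastic principal stretches as $\lambda_a^{\mathrm{e}} = \lambda_a/\lambda_a^{\mathrm{v}}$, yielding the multiplicative relation $\lambda_a = \lambda_a^{\mathrm{e}}\lambda_a^{\mathrm{v}}$ on each principal axis. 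I would then substitute the logarithmic scale function into the three strain definitions; since all three now share the common basis $\{\bm N_a \otimes \bm N_a\}$, it suffices to add the scalar coefficients, $\ln \lambda_a^{\mathrm{e}} + \ln \lambda_a^{\mathrm{v}} = \ln(\lambda_a^{\mathrm{e}}\lambda_a^{\mathrm{v}}) = \ln \lambda_a$, which is exactly the coefficient appearing in $\bm E^{(0)}_{\mathrm{SH}}$, and summing over $a$ delivers the claimed split.

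There is no substantive computational obstacle here; the point requiring care is the justification that coaxiality permits every operation to be performed in a single eigenbasis. Without this hypothesis one has $\hat{\bm N}_a \neq \bm N_a$ in general, the three strains fail to be simultaneously diagonalizable, and the scalar additivity of the logarithm cannot be lifted to a tensor identity. The essential content of the proposition is therefore the interplay between the coaxiality assumption and the fact that, among the Seth-Hill family, only the Hencky strain ($m=0$) carries the multiplicative-to-additive property $\ln(\lambda_a^{\mathrm{e}}\lambda_a^{\mathrm{v}}) = \ln\lambda_a^{\mathrm{e}} + \ln\lambda_a^{\mathrm{v}}$ that makes the additive split exact.
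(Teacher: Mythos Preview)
Your proposal is correct and follows essentially the same approach as the paper: both arguments use the coaxiality hypothesis to write $\bm C^{\mathrm e}=\bm F^{\mathrm v\:-T}\bm C\bm F^{\mathrm v\:-1}$ in the common eigenbasis $\{\bm N_a\}$, identify $\lambda_a^{\mathrm e}=\lambda_a/\lambda_a^{\mathrm v}$, and then invoke the additivity of the logarithm on the principal stretches. Your version is slightly more explicit in tracking that $\hat{\bm N}_a=\bm N_a$, but the logical content is identical.
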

\begin{proof}
Given the multiplicative decomposition $\bm F = \bm{F}^{\mathrm{e}} \bm{F}^{\mathrm{v}}$, the elastic Hencky strain is given by
\begin{align*}
\hat{\bm{E}}^{\mathrm{e}\:(0)}_{\mathrm{SH}} = \frac12 \mathrm{ln} \left( \bm{C}^{\mathrm{e}} \right) = \frac12 \mathrm{ln} \left( \bm{F}^{\mathrm{v}\:-T} \bm{C} \bm{F}^{\mathrm{v}\:-1} \right).
\end{align*}
Since $\bm{F}^{\mathrm{v}}$ is a pure stretch tensor coaxial with $\bm{C}$, we have
\begin{align*}
\hat{\bm{E}}^{\mathrm{e}\:(0)}_{\mathrm{SH}} = \frac12 \mathrm{ln} \left( \bm{C} \bm{F}^{\mathrm{v}\:-T} \bm{F}^{\mathrm{v}\:-1} \right)= \frac12 \sum_{a=1}^3 \mathrm{ln} \left( \lambda_a^2 \lambda^{\mathrm{v}\:-2}_{a} \right) \bm{N}_a \otimes \bm{N}_a = \bm{E}_{\mathrm{SH}}^{(0)} - \hat{\bm{E}}^{\mathrm{v}\:(0)}_{\mathrm{SH}},
\end{align*}
which completes the proof.
\end{proof}

\section{Glossary of terms}
\label{ap:Glossary}
\renewcommand{\arraystretch}{1.3}
\begin{longtable}{p{3.5cm} p{6cm} p{3.5cm}}
\hline
Symbol & Name or description & Place of definition or first occurrence\\
\hline
$\bm{E}$ & The generalized strain & \eqref{eq:Hill_strain}\\
$\tilde{\bm{E}}$ & The generalized strain associated with isochoric deformation & \eqref{eq:iso_Hill_strain} \\
$\mathbb{Q}$, $\mathbb{L}$ & The first and second derivatives of the strain $\bm{E}$ with respect to $\bm{C}$ & \eqref{eq:Q}, \eqref{eq:L}\\
$\tilde{\mathbb{Q}}$, $\tilde{\mathbb{L}}$, & The first and second derivatives of the strain $\tilde{\bm{E}}$ with respect to $\tilde{\bm{C}}$ & \eqref{eq:tilde_Q_L}\\
$\bm{\Gamma}$, $\bm Q$, $\bm E^{\mathrm{v}}$, & The deformation-like internal state variable, the stress-like internal state variable, and the viscous strain& \eqref{eq:Gamma}, \eqref{eq:def_Q}, \eqref{eq:Ev-def}\\
$\mathbb{Q}^{\mathrm{v}}$, $\mathbb{L}^{\mathrm{v}}$, &The first and second derivatives of the strain $\bm E^{\mathrm{v}}$ with respect to $\bm{\Gamma}$ & \eqref{eq:QQ_LL_v}\\
$\Psi$, $\Psi^{\infty}$, $\Upsilon$ & The Helmholz free energy, its equilibrium part, and the configurational free energy & \eqref{eq:Helmholtz_eq_neq}\\
$\Psi_{\mathrm{vol}}^{\infty}$, $\Psi_{\mathrm{iso}}$, $\Psi^{\infty}_{\mathrm{iso}}$ & The volumetric part of $\Psi$, the isochoric part of $\Psi$, and the isochoric part of $\Psi^{\infty}$ & \eqref{eq:vol_iso-Helmholtz-free-energy}, \eqref{eq:decomposition_of_Psi_iso} \\
$G$, $G^{\infty}_{\mathrm{vol}}$, $G_{\mathrm{iso}}$, $G^{\infty}_{\mathrm{iso}}$ & The Gibbs free energy, its volumetric part, its isochoric part, and the equilibrium part of  $G_{\mathrm{iso}}$ & \eqref{eq:Legendre_transformation}, \eqref{eq:Gibbs} \\
$\mathcal D$ & The internal dissipation & \eqref{eq:Clausius_Plank_inequality}\\
$\mathbb V$ & The viscosity tensor & \eqref{eq:constitutive_S_Q}\\
 $\mu^{\infty}$, $\mu^{\mathrm{neq}}$ & The shear moduli for the equilibrium part and non-equilibrium part & \eqref{eq:G_iso_inf_Upsilon}\\
$\bm S$, $\bm S^{\infty}$, $\bm S^{\mathrm{neq}}$ & The second Piola-Kirchhoff stress, its equilibrium part, and its non-equilibrium part & \eqref{eq:constitutive_S_Q}, \eqref{eq:def_S_T_eq_neq}$_{1}$, \eqref{eq:def_S_T_eq_neq}$_{2}$ \\
$\bm{T}^{\infty}$, $\bm{T}^{\mathrm{neq}}$ & Stresses conjugate to the generalized strain $\bm{E}$ with respect to $\Psi^{\infty}$ and $\Upsilon$, respectively & \eqref{eq:def_S_T_eq_neq}$_{3}$, \eqref{eq:def_S_T_eq_neq}$_{4}$ \\
$\tilde{\bm{S}}^{\infty}_{\mathrm{iso}}$, $\tilde{\bm{S}}_{\mathrm{iso}}^{\mathrm{neq}}$ & Fictitious stresses & \eqref{eq:tilde_stress}$_1$, \eqref{eq:tilde_stress}$_{2}$ \\
$\tilde{\bm{T}}^{\infty}$, $\tilde{\bm{T}}^{\mathrm{neq}} $ & The stress conjugate to the generalized strain $\tilde{\bm{E}}$ with respect to $G^{\infty}_{\mathrm{iso}}$ and $\Upsilon$, respectively & \eqref{eq:tilde_T_equilibrated} \\
$\mathbb{C}_{\mathrm{iso}}$, $\mathbb{C}_{\mathrm{iso}}^{\infty}$, $\mathbb{C}_{\mathrm{iso}}^{\mathrm{neq}}$ & The isochoric elasticity tensor, its equilibrium part, and non-equilibrium part & \eqref{eq:definition_CC_iso} \\
$\tilde{\mathbb{C}}_{\mathrm{iso} }^{\infty}$, $\tilde{\mathbb{C}}_{\mathrm{iso}}^{\mathrm{neq}}$ & The fictitious elasticity tensors & \eqref{eq:tilde_CC_iso_equi}$_{1}$, \eqref{eq:elements_in_CC_neq}$_{1}$ \\
$\Psi^{\infty}_{\beta}$ & The $\beta$-th quadratic term of $\Psi^{\infty}$ & \eqref{eq:Hill_Psi_Upsilon}$_1$\\
$\bm{E}^{\infty}_{\beta}$ & The generalized strain used to define the quadratic energy $\Psi^{\infty}_{\beta}$ & \eqref{eq:Hill_Psi_Upsilon} \\
$\bm{T}^{\infty}_{\beta}$ & The stresses conjugate to $\bm{E}_{\beta}$ with respect to $\Psi^{\infty}_{\beta}$ & \eqref{eq:T_eq_neq_beta}$_1$ \\
$\mathbb{Q}_{\beta}^{\infty}$ & The first derivative of the strain $\bm{E}^{\infty}_{\beta}$ with respect of $\bm{C}$ & \eqref{eq:Q_inf_beta_alpha}$_{1}$ \\
$\bm{\Gamma}^{\alpha}$, $\bm Q^{\alpha}$, $\mathbb{V}^{\alpha}$ & The internal state variables and viscosity tensors associated with the $\alpha$-th relaxation processes & Section \eqref{sec:Kinematic assumption}, \eqref{eq:gen_S_inf_S_neq_Q_alpha}$_3$, \eqref{eq:Q_alpha_V_alpha_evo_eqn}$_1$\\
$\Upsilon^{\alpha}$ & The configurational free energy associated with the $\alpha$-th relaxation process & \eqref{eq:Hill_Psi_Upsilon}$_2$ \\
$\bm{E}^{\alpha}$, $\bm{E}^{\mathrm{v} \: \alpha}$ & The generalized strains that defines $\Upsilon^{\alpha}$ & Section \ref{sec:helmholtz-gen-hyperelasticity} \\
$\bm{T}^{\alpha}$ & The stress conjugate to $\bm{E}^{\alpha}$ with respect to $\Upsilon^{\alpha}$  & \eqref{eq:T_eq_neq_beta}$_3$ \\
$\mathbb Q^{\alpha}$, $\mathbb Q^{\mathrm v \: \alpha}$ & The derivative of $\bm{E}^{\alpha}$ with respect to $\bm{C}$ and the derivative of $\bm{E}^{\mathrm{v}\:\alpha}$ with respect to $\bm{\Gamma}^{\alpha}$ & \eqref{eq:Q_inf_beta_alpha}$_2$, \eqref{eq:Q_inf_beta_alpha}$_3$ \\
$G^{\infty}_{\mathrm{iso} \: \beta}$ & The $\beta$-th quadratic term of $G^{\infty}_{\mathrm{iso}}$ & \eqref{eq:Hill_Gibbs_Upsilon}$_1$ \\
$\tilde{\bm{E}}^{\infty}_{\beta}$ & The generalized strains used to define the quadratic energy $G^{\infty}_{\mathrm{iso} \: \beta}$ & \eqref{eq:Hill_Gibbs_Upsilon}$_1$ \\
$\mu^{\infty}_{\beta}$, $\mu^{\alpha}$ & The shear moduli used in $G^{\infty}_{\mathrm{iso} \: \beta}$ and $\Upsilon^{\alpha}$ & \eqref{eq:Hill_Gibbs_Upsilon} \\
$\tilde{\bm{T}}^{\infty}_{\beta}$, $\tilde{\bm{T}}^{\alpha}$ & The stresses conjugate to $\tilde{\bm{E}}^{\infty}_{\beta}$ with respect to $G^{\infty}_{\mathrm{iso} \: \beta}$ and the stresses conjugate to $\tilde{\bm E}^{\alpha}$ with respect to $\Upsilon^{\alpha}$, respectively & \eqref{eq:tilde-stress-like-projection-tensors}$_1$, \eqref{eq:tilde-stress-like-projection-tensors}$_2$ \\
$\tilde{\mathbb Q}^{\infty}_{\beta}$, $\tilde{\mathbb L}^{\infty}_{\beta}$ & The first and second derivatives of $\tilde{\bm{E}}^{\infty}_{\beta}$ with respect to $\tilde{\bm{C}}$ & \eqref{eq:tilde-stress-like-projection-tensors-2}$_1$, \eqref{eq:tilde_L_beta} \\
$\tilde{\bm{E}}^{\alpha}$, $\mu^{\mathrm{neq}\:\alpha}$ & The strain and shear moduli used in the definition of the quadratic energy $\Upsilon^{\alpha}$ & \eqref{eq:Hill_Gibbs_Upsilon} \\
$\tilde{\bm{T}}^{\alpha}$ & The stresses conjugate to $\tilde{\bm{E}}^{\alpha}$ with respect to $\Upsilon^{\alpha}$ & \eqref{eq:tilde-stress-like-projection-tensors}$_2$ \\
$\tilde{\mathbb Q}^{\alpha}$, $\tilde{\mathbb L}^{\alpha}$ & The first and second derivatives of $\tilde{\bm{E}}^{\alpha}$ with respect to $\tilde{\bm{C}}$ & \eqref{eq:tilde-stress-like-projection-tensors-2}$_2$, \eqref{eq:elements_in_CC_neq}$_2$ \\
$\mathbb{K}^{\alpha}$ & The derivative of $\bm{Q}^{\alpha}$ with respect to $\bm{\Gamma}^{\alpha}$ & \eqref{eq:definition_K} \\
$\mathbb L^{\mathrm{v} \: \alpha}$ & The second derivative of $\bm{E}^{\mathrm{v}\:\alpha}$ with respect to $\bm{\Gamma}^{\alpha}$ & \eqref{eq:Lv_alpha_beta} \\
\hline
\end{longtable} 

\bibliographystyle{elsarticle-num}
\bibliography{viscoelasticity-theory.bib}

\end{document}